\documentclass[11pt]{amsart}
\usepackage[a4paper,left=3cm,right=3cm]{geometry}
\usepackage[latin1]{inputenc}
\usepackage[english]{babel}
\usepackage{amsmath}
\usepackage{verbatim}
\usepackage{amssymb}
\usepackage{color}
\usepackage{hyperref}
\usepackage{amsthm}
\usepackage{tikz}
\usepackage{pgfplots}
\usepackage{tikz-3dplot}
\usepackage{float}
\usepackage{amsfonts}

\newtheorem{theorem}{Theorem}[section]

\newtheorem{lemma}[theorem]{Lemma}

\newcommand{\Om}{\Omega}

\newcommand{\sq}{\subseteq}

\newcommand{\vps}{\varepsilon}
\newcommand{\ra}{\rightarrow}
\newcommand{\RR}{\mathbb R}


\begin{document}
\title{Asymptotic behaviour of the Steklov problem on dumbbell domains}
\author[]{Dorin Bucur, Antoine Henrot, Marco Michetti}

\address[Dorin Bucur]{Univ. Savoie Mont Blanc, CNRS, LAMA \\
73000 Chambéry, France
}
\email{dorin.bucur@univ-savoie.fr}
\address[Antoine Henrot]{
Institut Elie Cartan de Lorraine \\ CNRS UMR 7502 and Universit\'e de Lorraine \\
BP 70239
54506 Vandoeuvre-l\`es-Nancy, France}
\email{antoine.henrot@univ-lorraine.fr}
\address[Marco Michetti]{
Institut Elie Cartan de Lorraine \\ CNRS UMR 7502 and Universit\'e de Lorraine \\
BP 70239
54506 Vandoeuvre-l\`es-Nancy, France}
\email{marco.michetti@univ-lorraine.fr}

\date{}

\begin{abstract}
    We analyse the asymptotic behaviour of the eigenvalues and eigenvectors of a Steklov problem in a dumbbell domain consisting of two Lipschitz sets connected by a thin tube with vanishing width. All the eigenvalues are collapsing to zero, the speed being driven by some power of the width which multiplies the eigenvalues of a one dimensional problem. In two dimensions of the space, the behaviour is fundamentally different from the  third or higher  dimensions and the limit problems are of different nature.  This phenomenon is due to the fact that only in dimension two the boundary of the tube has not vanishing surface measure. 
    \end{abstract}

\maketitle

\section{Introduction}
The purpose of this paper is to analyse the asymptotic behaviour of the eigenvalues and eigenfunctions of the Steklov problem in a dumbbell domain. Given $\Om \sq \RR^n$, open, bounded, connected, Lipschitz set, the Steklov problem on $\Om$ consists in solving the eigenvalue problem 
\begin{equation}
\begin{cases}
     \Delta u=0\qquad  \Omega   \\
      \partial_{\nu} u=\sigma u \qquad \partial\Omega,
\end{cases}
\end{equation}
where $\nu$ stands for the outward normal at the boundary.
As the trace operator $H^1(\Om) \ra L^2(\partial \Om)$ is compact, the spectrum of the Steklov problem is discrete and the eigenvalues (counted with their multiplicities) go to infinity
$$0= \sigma_0(\Om) <  \sigma_1(\Om)\le  \sigma_2 (\Om) \le \cdots \ra +\infty.$$
We also have the following variational characterization of the Steklov eignevalues
\begin{equation*}
\sigma_k(\Om)=\inf_{E_k} \sup_{0\neq u\in E_k } \frac{\int_{\Omega}|\nabla u|^2dx}{\int_{\partial \Omega}u^2 d{\mathcal H}^{n-1}},
\end{equation*}
where the infimum is taken over all $k$-dimensional subspaces of the Sobolev space $H^1(\Omega)$ which are  $L^2$-orthogonal to constants on $\partial\Omega$.

Let $\Omega_{\epsilon}\subset \mathbb{R}^n$ be a dumbbell shape domain
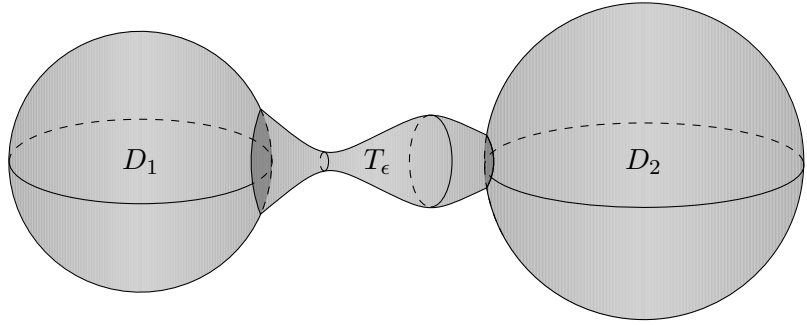
\begin{figure}
\centering
\tdplotsetmaincoords{0}{0}
\begin{tikzpicture}[tdplot_main_coords, scale=0.7]
\fill[left color=gray!50!black,right color=gray!50!black,middle color=gray!50,shading=axis,opacity=0.15] (-4.45,0,0) circle (2.47); 
\fill[left color=gray!50!black,right color=gray!50!black,middle color=gray!50,shading=axis,opacity=0.15] (5,0,0) circle (3);
\fill[left color=gray!50!black,right color=gray!50!black,middle color=gray!50,shading=axis,opacity=0.15] (-2.20,1,0) .. controls (-1,0,0,).. (0,0.5,0).. controls (1,1,0,)..  (2.05,0.5,0)  arc (30:-30:1).. controls (1,-1,0,).. (0,-0.5,0) .. controls (-1,0,0,).. (-2.20,-1,0) arc (200:160:2.9);
\draw (2.05,0.5,0)  arc (30:-30:1);
\draw (2.05,0.5,0) arc (170.5:-170.5:3);
\draw [dashed] (2.05,0.5,0) arc (170.5:210:3);

\draw (8,0,0) arc (5:-163:3 and 0.8);
\draw [dashed] (8,0,0) arc (5:197:3 and 0.8);
\draw (-2.20,1,0) .. controls (-1,0,0,).. (0,0.5,0).. controls (1,1,0,)..  (2.05,0.5,0)  arc (30:-30:1).. controls (1,-1,0,).. (0,-0.5,0) .. controls (-1,0,0,).. (-2.20,-1,0) arc (200:160:2.9);
\draw (-2.20,1,0) arc(24:337:2.47);
\draw [dashed ] (-2.20,1,0) arc(24:-24:2.47); 
\draw (-6.92,0,0) arc(180:328:2.47 and 0.8);
\draw [dashed] (-6.92,0,0) arc(180:-32:2.47 and 0.8);

\draw (1,0.88,0,) arc(90:-90:0.4 and 0.87);
\draw [dashed](1,0.88,0,) arc(90:270:0.4 and 0.87);
\draw (-1,0.18,0,) arc(90:-90:0.08 and 0.18);
\draw [dashed] (-1,0.18,0,) arc(90:270:0.08 and 0.18);
\draw (-4.45,0,0) node {$D_1$};
\draw (5,0,0) node {$D_2$};
\draw (0,0,0) node {$T_{\epsilon}$};
\end{tikzpicture}
\caption{Dumbbell shape domain $\Omega_{\epsilon}$.}
\label{fig1}
\end{figure}
given by (see  Figure \ref{fig1})

\begin{equation*}
 \Omega_{\epsilon}=D_1\cup T_{\epsilon}\cup D_2, 
\end{equation*}
where $D_1$ and $D_2$ are disjoint, bounded, open, connected sets  in $\mathbb{R}^n$ with Lipschitz boundary and $T_{\epsilon}$ is expressed as
\begin{equation*}
T_{\epsilon}=\big\{x=(x_1,x')\in \mathbb{R}^n | -\frac{L}{2}\leq x_1\leq \frac{L}{2}, |x'|< \epsilon\rho(x_1) \big\},
\end{equation*}
where $L>0$ and $\rho\in C^0([-\frac{L}{2},\frac{L}{2}])\cap C^{\infty}((-\frac{L}{2},\frac{L}{2}))$ is a positive function.

The connection between the channel and the two regions $D_1$ and $D_2$ occurs as follows: we
assume that there exist an orthogonal system of coordinate $x=(x_1,x_2,...,x_n)=(x_1,x')\in \mathbb{R}^n$ and two constants $L,\delta \in \mathbb{R}$ such that
\begin{align*}
\overline{D_1}\cap \big\{x=(x_1,x')\in \mathbb{R}^n | x_1\geq -\frac{L}{2}, |x'|\leq \delta\big\}&=\big\{x=(-\frac{L}{2},x')\in \mathbb{R}^n |\,\, |x'|\leq \delta\big\} \\
\overline{D_2}\cap \big\{x=(x_1,x')\in \mathbb{R}^n | x_1\leq \frac{L}{2}, |x'|\leq \delta\big\}&=\big\{x=(\frac{L}{2},x')\in \mathbb{R}^n |\,\, |x'|\leq \delta\big\}.
\end{align*}  

The eigenvalues of the Steklov problem in $\Om_\epsilon$ are denoted by
\begin{equation*}
0=\sigma_0^{\epsilon}<\sigma_1^{\epsilon}\leq \sigma_2^{\epsilon}\leq... \nearrow \infty \qquad \forall \epsilon>0,
\end{equation*}
multiplicity being counted, and the corresponding eigenfunctions by  $u_k^{\epsilon}$, which are normalized in $L^2(\partial \Om_\epsilon)$, $||u^{\epsilon}_k||_{L^2(\partial\Omega_{\epsilon})}=1$.

The main purpose of this work is to study what is the behaviour of $(\sigma_k^{\epsilon},u^{\epsilon}_k)$ when $\epsilon$ goes to $0$. The first thing to notice is that, if $\epsilon\rightarrow 0$, the channel $T_{\epsilon}$ collapse to a line and the norm of the trace operator blows up.  One can easily observe  that

\begin{equation*}
\forall k\in \mathbb{N}, \quad \sigma_k^{\epsilon}\rightarrow 0 \qquad \mbox{when } \epsilon \ra 0,
\end{equation*}
our objective being to give precise estimates of the asymptotic behaviour of $\sigma_k^{\epsilon}$ when $\epsilon$ goes to $0$. We shall prove that $\sigma_k^{\epsilon}$ behaves, roughly speaking, as $\mu_k \epsilon ^\gamma$, where $\mu_k$ is the $k$-th eigenvalue of some one dimensional problem and $\gamma \in \{1, n-1\}$. 

As an interesting feature, we notice that the behaviour strongly depends on the dimension of the ambient space. Indeed we have to distinguish between the cases $n=2$ and $n\geq 3$, as we shall see below. This fact is due to the presence of the boundary energy in the Rayleigh quotient of the Steklov problem and to the fact that in dimension three, or higher, the surface area measure of the boundary of the tube is vanishing with $\epsilon$.

Below, we denote by $P(D)$ the surface area measure of the boundary of $D$ and  $\omega_{n}$ is the Lebesgue measure of the $n-$dimensional unit ball. Let $\Phi_\vps : T_1 \ra T_\epsilon$, $\Phi_\epsilon (x_1, x')= (x_1, \epsilon x')$. 

Here are our main results. The first theorem concerns the case $n=2$. \begin{theorem}\label{th1}{\bf ($n=2$)}
Let $\Omega_{\epsilon}\subset \mathbb{R}^2$ be the dumbbell shape domain defined as above. Then
\begin{equation*}
\sigma_k^{\epsilon}\sim \mu_k \epsilon+o(\epsilon) \quad \text{as} \quad \epsilon\rightarrow 0, 
\end{equation*}
where $\mu_k$ is the $k-$th eigenvalue  of the following problem 
\begin{equation}\label{ep1}
\begin{cases}
\vspace{0.2cm}
   -\frac{d}{dx}\big(\rho(x)\frac{d V_k}{dx}(x)\big)=\mu_k V_k(x)  \qquad  x\in \big(-\frac{L}{2},\frac{L}{2} \big) \\
\vspace{0.2cm}
      \rho(-\frac{L}{2})\frac{dV_k}{dx}(-\frac{L}{2})=-\frac{\mu_k}{2}P(D_1)V_k(-\frac{L}{2}) \\
      \rho(\frac{L}{2})\frac{dV_k}{dx}(\frac{L}{2})=\frac{\mu_k}{2}P(D_2)V_k(\frac{L}{2}).
\end{cases}
\end{equation}
For every subsequence $\{ \epsilon_n\}_{n=1}^{\infty}$ such that $\epsilon_n\rightarrow 0$, we have 
\begin{equation*}
u_k^{\epsilon_n}\circ \Phi_{\epsilon_n} \rightharpoonup \overline{V}_k \quad \text{in} \quad H^1(T_1),
\end{equation*}
where $\overline{V}_k$ is a  $k-$th eigenfunction of the problem \eqref{ep1} constantly extended in the variable $x_2$. 
\end{theorem}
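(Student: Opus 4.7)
The plan is to bracket $\sigma_k^\epsilon/\epsilon$ between $\mu_k$ from above and from below, and to derive in parallel the convergence of the rescaled eigenfunctions. For the upper bound I exploit the min--max characterization with explicit test functions built from eigenfunctions of \eqref{ep1}; for the lower bound I pass to the limit in the Steklov equation after a suitable extraction.

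\emph{Upper bound.} Let $V_0, V_1, \ldots, V_k$ be the first $k+1$ eigenfunctions of \eqref{ep1}, orthonormal with respect to the weighted inner product naturally associated to \eqref{ep1},
\begin{equation*}
\langle V,W\rangle_* := \int_{-L/2}^{L/2} VW \, dx_1 + \tfrac{P(D_1)}{2}V(-\tfrac{L}{2})W(-\tfrac{L}{2}) + \tfrac{P(D_2)}{2}V(\tfrac{L}{2})W(\tfrac{L}{2}).
\end{equation*}
I extend each $V_j$ to $\Omega_\epsilon$ by $\tilde V_j^\epsilon(x_1,x'):=V_j(x_1)$ on $T_\epsilon$ and by the constants $V_j(\pm L/2)$ on $D_{1,2}$; these match continuously at the interfaces. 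A direct change of variables gives
\begin{equation*}
\int_{\Omega_\epsilon}|\nabla \tilde V_j^\epsilon|^2 \, dx = 2\epsilon \int_{-L/2}^{L/2} \rho (V_j')^2 \, dx_1,
\end{equation*}
while on the boundary, crucially using that $\mathcal{H}^1(\partial T_\epsilon) \to 2L > 0$ in dimension $n=2$,
\begin{equation*}
\int_{\partial\Omega_\epsilon} (\tilde V_j^\epsilon)^2 \, d\mathcal{H}^1 \longrightarrow P(D_1)V_j(-\tfrac{L}{2})^2 + P(D_2)V_j(\tfrac{L}{2})^2 + 2\int_{-L/2}^{L/2} V_j^2 \, dx_1 = 2\langle V_j,V_j\rangle_*.
\end{equation*}
Thus the Rayleigh quotient of $\tilde V_j^\epsilon$ equals $\mu_j\epsilon + o(\epsilon)$; after an $o(1)$ correction to enforce $L^2(\partial\Omega_\epsilon)$-orthogonality to constants, the min--max yields $\sigma_k^\epsilon \le \mu_k\epsilon + o(\epsilon)$.

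\emph{Lower bound and convergence.} From $\int_{\Omega_\epsilon}|\nabla u_k^\epsilon|^2 = \sigma_k^\epsilon = O(\epsilon)$ I get $\|\nabla u_k^\epsilon\|_{L^2(D_i)}\to 0$; combined with the normalization and a Poincar\'e--trace argument, $u_k^\epsilon \to c_i$ strongly in $H^1(D_i)$ for some constants $c_{1,2}$ along a subsequence. On the tube, setting $w_k^\epsilon := u_k^\epsilon \circ \Phi_\epsilon \in H^1(T_1)$, the rescaling identity
\begin{equation*}
\int_{T_\epsilon}|\nabla u_k^\epsilon|^2 \, dx = \epsilon \int_{T_1} |\partial_{x_1} w_k^\epsilon|^2 \, dx + \epsilon^{-1} \int_{T_1} |\nabla_{x'} w_k^\epsilon|^2 \, dx
\end{equation*}
shows $\{w_k^\epsilon\}$ is bounded in $H^1(T_1)$ with $\|\nabla_{x'} w_k^\epsilon\|_{L^2(T_1)} = O(\epsilon)$; so $w_k^\epsilon \rightharpoonup \bar V_k$ weakly in $H^1(T_1)$ with $\bar V_k(x_1,x')=V_k(x_1)$ depending only on $x_1$, and continuity of traces at $\{x_1=\pm L/2\}$ forces $V_k(\pm L/2)=c_{1,2}$. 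Testing the Steklov equation against trial functions of the form used above and dividing by $\epsilon$, the three boundary pieces ($\partial D_1$, $\partial D_2$, lateral walls of $T_\epsilon$) produce in the limit exactly the three summands of $\langle\cdot,\cdot\rangle_*$, so $V_k$ weakly solves \eqref{ep1} with eigenvalue $\lambda_k := \lim \sigma_k^\epsilon/\epsilon$.

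\emph{Identification and main obstacle.} Applying this analysis simultaneously to the first $k+1$ eigenpairs, the $L^2(\partial\Omega_\epsilon)$-orthogonality of $u_0^\epsilon,\ldots,u_k^\epsilon$ passes to the limit as $\langle\cdot,\cdot\rangle_*$-orthogonality of $\bar V_0,\ldots,\bar V_k$, forcing $\lambda_k = \mu_k$ and $\bar V_k$ to be a $k$-th eigenfunction of \eqref{ep1}. The main obstacle is precisely this last step: one must rule out degeneracy of the limits (some $\bar V_j$ collapsing to zero, or the family becoming linearly dependent). This requires careful accounting of how the boundary $L^2$-mass of each $u_j^\epsilon$ distributes between the two reservoirs and the lateral walls of $T_\epsilon$, and uses in an essential way the two-dimensional feature $\mathcal{H}^1(\partial T_\epsilon) \to 2L > 0$.
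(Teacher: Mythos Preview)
Your approach is the paper's: min--max upper bound with eigenfunctions of \eqref{ep1} extended constantly into $D_1,D_2$; compactness on each $D_i$ and on the rescaled tube; passage to the limit in the weak formulation; and orthogonality to identify the index. One step, however, is not correct as written. The matching $V_k(\pm L/2)=c_{i}$ does \emph{not} follow from ``continuity of traces at $\{x_1=\pm L/2\}$'': the trace of $w_k^\epsilon$ on $\{x_1=-L/2\}$ equals $u_k^\epsilon(-L/2,\epsilon\,\cdot\,)$, which is $u_k^\epsilon$ sampled on a set that shrinks to a point in the original variables, and weak $H^1(D_1)$ convergence of $u_k^\epsilon$ to $c_1$ carries no information on such a shrinking set. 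The paper handles this by extending the tube a fixed distance $\xi>0$ into each $D_i$, repeating the compactness argument on the extended rescaled tube $E_1$, and then using the locally uniform convergence $u_k^\epsilon\to c_i$ on compact subsets of $D_i$ (a consequence of harmonicity and the mean-value property) together with the one-dimensional embedding $H^1(-\tfrac{L}{2}-\xi,\tfrac{L}{2}+\xi)\hookrightarrow C^0$. For the non-degeneracy you correctly flag as the main obstacle, the paper first shows $V_k\not\equiv 0$ from the normalization $\|u_k^\epsilon\|_{L^2(\partial\Omega_\epsilon)}=1$ (once the matching above is secured) and then argues by induction on $k$, finally invoking simplicity of the Sturm--Liouville spectrum to upgrade subsequential convergence to full convergence.
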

This kind of eigenvalue problem (in any dimension)
where the eigenvalue $\mu_k$ appears both inside the domain and
in the boundary condition is sometimes called a {\it dynamical eigenvalue problem}. It
appears at different places in the literature. we refer for example to \cite{vBF} where a complete
study of this eigenvalue problem has been done.
See also \cite{GHL} where a similar problem appears in the homogenization of the Steklov problem.

\medskip
The next two theorems concern the case $n\geq 3$. We shall distinguish between the behaviour of the first non-zero eigenvalue, and the others.
\begin{theorem}\label{th2}{\bf ($n\ge 3$,  $k \ge 2$)}
Let $\Omega_{\epsilon}\subset \mathbb{R}^n$ be the dumbbell shape domain defined as above and $n\geq 3$. Then for all $k\geq 2$ we have 
\begin{equation*}
\sigma_k^{\epsilon}\sim \alpha_{k-1} \epsilon+o(\epsilon) \quad \text{as} \quad \epsilon\rightarrow 0, 
\end{equation*}
where $\alpha_{k-1}$is the $(k-1)-$th eigenvalue (counting from zero) of
\begin{equation}\label{ep3}
\begin{cases}
\vspace{0.2cm}
   -w_{n-1}\frac{d}{dx}\big(\rho^{n-1}(x)\frac{d V_k}{dx}(x)\big)=\alpha_kw_{n-2}\rho^{n-2}(x)V_k(x)  \qquad  x\in \big(-\frac{L}{2},\frac{L}{2} \big) \\
   \vspace{0.2cm}
   V_k(-\frac{L}{2})=0 \\
   V_k(\frac{L}{2})=0.
\end{cases}
\end{equation}
For every subsequence $\{ \epsilon_n\}_{n=1}^{\infty}$ such that $\epsilon_n\rightarrow 0$, we have 
\begin{equation*}
\epsilon_n^{\frac{n-2}{2}}u_k^{\epsilon_n}\circ \Phi_{\epsilon_n} \rightharpoonup \overline{V}_{k-1}\quad \text{in} \quad H^1(T_1),
\end{equation*}
where $\overline{V}_{k-1}$ is an eigenfunction corresponding to $\alpha_{k-1}$, constantly extended into the variables $x_i$ for $2\leq i\leq n$.
\end{theorem}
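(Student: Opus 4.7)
My plan is a two-sided variational bound on $\sigma_k^\epsilon/\epsilon$ combined with identification of the weak $H^1(T_1)$-limit of the rescaled eigenfunctions. Normalizing $\|u_k^\epsilon\|_{L^2(\partial\Om_\epsilon)}=1$ so that $\int_{\Om_\epsilon}|\nabla u_k^\epsilon|^2=\sigma_k^\epsilon$, a Poincar\'e--trace inequality on each fixed set $D_i$ gives a uniform $H^1(D_i)$ bound on $u_k^\epsilon$, while the change of variables defining $w^\epsilon = \epsilon^{(n-2)/2}u_k^\epsilon\circ\Phi_\epsilon$ yields
\[
\|\partial_{x_1}w^\epsilon\|_{L^2(T_1)}^2=\epsilon^{-1}\|\partial_{x_1}u_k^\epsilon\|_{L^2(T_\epsilon)}^2,\qquad \|\nabla_{x'}w^\epsilon\|_{L^2(T_1)}^2=\epsilon\|\nabla_{x'}u_k^\epsilon\|_{L^2(T_\epsilon)}^2,
\]
so that an upper bound $\sigma_k^\epsilon\le C\epsilon$ (obtained in the next step), together with a scaled Poincar\'e inequality on cross-sections, translates into $w^\epsilon$ bounded in $H^1(T_1)$ and $\nabla_{x'}w^\epsilon\to 0$ in $L^2$.

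For the upper bound I apply the min--max characterization of $\sigma_k^\epsilon$ to the $(k+1)$-dimensional subspace of $H^1(\Om_\epsilon)$ spanned by the constant $1$, a ``first-Steklov'' test function $\psi^\epsilon$ (piecewise constant equal to $a_i$ on $D_i$, with $a_1P(D_1)+a_2P(D_2)=0$, and linearly interpolated on the tube), and the $k-1$ extensions of the 1D eigenfunctions $V_1,\ldots,V_{k-1}$ of \eqref{ep3}, each set equal to $V_j(x_1)$ on $T_\epsilon$ and to $0$ on $D_1\cup D_2$ (these are admissible since $V_j(\pm L/2)=0$). A direct computation yields $R(\psi^\epsilon)=O(\epsilon^{n-1})$ and $R(V_j)=\alpha_j\epsilon+o(\epsilon)$, and the cross inner products $\int_{\partial\Om_\epsilon}\psi^\epsilon V_j\,dS$ and $\int_{\partial\Om_\epsilon}V_iV_j\,dS$ (for $i\ne j$) are of strictly lower order than the diagonal terms, so the largest generalized eigenvalue of the resulting $(k+1)\times(k+1)$ pencil is $\alpha_{k-1}\epsilon+o(\epsilon)$.

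For the lower bound, extract a subsequence so that $w^\epsilon\rightharpoonup\bar V$ in $H^1(T_1)$ and $u_k^\epsilon\rightharpoonup m_i$ in $H^1(D_i)$. Since $\|\nabla u_k^\epsilon\|_{L^2(D_i)}\to 0$, each $m_i$ is constant, and since $\nabla_{x'}w^\epsilon\to 0$, $\bar V$ depends only on $x_1$. Testing the Steklov weak formulation against a $v$ piecewise constant on $D_1\cup D_2$ which matches on the tube forces $m_1=m_2=0$; then testing against $v=\phi(x_1)$ with $\phi\in C_c^\infty(-L/2,L/2)$ and noticing that both sides of the weak formulation are of order $\epsilon^{n/2}$ (because $u_k^\epsilon\sim\epsilon^{-(n-2)/2}\bar V$ on the tube, $|T_\epsilon|\sim\epsilon^{n-1}$, and the lateral surface area of $T_\epsilon$ is $\sim\epsilon^{n-2}$), after division by $\epsilon^{n/2}$ and passage to the limit using compact trace $H^1(T_1)\to L^2(\partial^{\mathrm{lat}}T_1)$, one recovers the weak form of \eqref{ep3} for $\bar V$ with eigenvalue $\lambda:=\lim\sigma_k^\epsilon/\epsilon$. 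Applying this procedure simultaneously to $u_2^\epsilon,\ldots,u_k^\epsilon$ produces $k-1$ limit pairs $(\lambda_j,\bar V_j)$; the orthonormality $\int_{\partial\Om_\epsilon}u_j^\epsilon u_{j'}^\epsilon\,dS=\delta_{jj'}$ passes to the limit via its dominant lateral-tube contribution $|\mathbb{S}^{n-2}|\int\rho^{n-2}\bar V_j\bar V_{j'}\,dx_1$, giving weighted $L^2$-orthogonality of the limit eigenfunctions; the 1D min--max principle then gives $\lambda\ge\alpha_{k-1}$, matching the upper bound.

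The main obstacle is the Dirichlet condition $\bar V(\pm L/2)=0$ for the limit. The direct control $\|u_k^\epsilon\|_{L^2(\partial D_i)}=O(\epsilon^{1/2})$ gives only $\int_{S_i^\epsilon}(u_k^\epsilon)^2=O(\epsilon)$ on the junction disk $S_i^\epsilon=\{\pm L/2\}\times B_{\epsilon\rho(\pm L/2)}$, which is of exactly the same order as what the tube side predicts even when $\bar V(\pm L/2)\ne 0$, so a finer argument is needed --- most naturally either a blow-up analysis of $u_k^\epsilon$ near the junction disks or an exploitation of the fact that the harmonic capacity of the shrinking junction disk inside $D_i$ is $O(\epsilon^{n-2})\to 0$ for $n\ge 3$, which forces the matching value $\epsilon^{-(n-2)/2}\bar V(\pm L/2)$ on the tube side to be compatible with the uniform $H^1(D_i)$ bound only when $\bar V(\pm L/2)=0$.
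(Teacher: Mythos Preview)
Your overall architecture matches the paper: a min--max upper bound, weak $H^1$ compactness of the rescaled eigenfunctions on $T_1$, identification of a one-dimensional limit equation, and a lower bound via orthogonality of the limit eigenfunctions. Your upper bound is actually more direct than the paper's: you test against constants, a piecewise-constant ``first mode'', and the Dirichlet eigenfunctions of \eqref{ep3} extended by zero, obtaining $\sigma_k^\epsilon\le\alpha_{k-1}\epsilon+o(\epsilon)$ in one step, whereas the paper first bounds $\sigma_k^\epsilon$ by the $k$-th eigenvalue $\lambda_k^\epsilon$ of an $\epsilon$-dependent dynamical problem and then proves $\lambda_k^\epsilon\le\alpha_{k-1}+o(1)$ in a separate lemma.

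There is, however, a genuine gap in your lower bound. The assertion ``testing the Steklov weak formulation against a $v$ piecewise constant on $D_1\cup D_2$ forces $m_1=m_2=0$'' is not justified: with such a $v$ the identity $\int_{\Omega_\epsilon}\nabla u_k^\epsilon\cdot\nabla v=\sigma_k^\epsilon\int_{\partial\Omega_\epsilon}u_k^\epsilon v$ yields, after the obvious estimates, only
\[
O\!\left(\epsilon^{(n-1)/2}(\sigma_k^\epsilon)^{1/2}\right)=\sigma_k^\epsilon\bigl(m_iP(D_i)+o(1)\bigr),
\]
and to conclude $m_i=0$ you need $\sigma_k^\epsilon\gg\epsilon^{n-1}$ along the subsequence, which is precisely part of the lower bound you are trying to prove. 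The paper closes this gap by a different route that you do not mention: it first analyses $u_1^\epsilon$ (showing, via the estimate $\sigma_1^\epsilon=O(\epsilon^{n-1})$ and a Poincar\'e argument on the tube, that $\int_{\partial T_\epsilon}(u_1^\epsilon)^2\to0$ and hence $c_{1,1}\neq0\neq c_{2,1}$), and then uses the boundary orthogonality $\int_{\partial\Omega_\epsilon}u_1^\epsilon u_k^\epsilon=0$ together with $c_{1,1}P(D_1)+c_{2,1}P(D_2)=0$ and $c_{1,k}P(D_1)+c_{2,k}P(D_2)=0$ to force $c_{1,k}=c_{2,k}=0$. This is the missing ingredient; without it neither the non-triviality of $\bar V$ nor the correct normalisation of the limit Rayleigh quotient follows.

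For the Dirichlet condition $\bar V(\pm L/2)=0$, your capacity heuristic is plausible but would require nontrivial work to make rigorous (one needs to convert the weak $H^1(T_1)$ trace information into a quantitative lower bound on $\|\nabla u_k^\epsilon\|_{L^2(D_i)}^2$). The paper instead extends the tube slightly into $D_i$, shows that the rescaled functions still converge weakly in $H^1$ on the extended tube, and uses the \emph{locally uniform} convergence $u_k^\epsilon\to c_{i,k}=0$ on compact subsets of $D_i$ (harmonicity plus $H^1$ bound) to read off $V_k(\pm L/2)=0$ by continuity of one-dimensional $H^1$ functions. This avoids any capacity estimate but relies on having already established $c_{i,k}=0$ via the $u_1^\epsilon$ argument above.
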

Therefore, in the case $n\ge 3$,  $k \ge 2$ we end up with a classical Dirichlet eigenvalue problem.
\begin{theorem}\label{th3}{\bf ($n\ge 3$,  $k =1$)}
Let $\Omega_{\epsilon}\subset \mathbb{R}^n$ be the dumbbell shape domain defined as above and $n\geq 3$. The first Steklov eigenvalue has the following asymptotic behaviour
\begin{equation*}
\sigma_1^{\epsilon}\sim \sigma_1 \epsilon^{n-1}+o(\epsilon^{n-1}) \quad \text{as} \quad \epsilon\rightarrow 0, 
\end{equation*}
where $\sigma_1$ is the unique positive number such that the following differential equation has a 
non-trivial solution:
\begin{equation}\label{ep2}
\begin{cases}
\vspace{0.2cm}
   -\omega_{n-1}\frac{d}{dx}\big(\rho^{n-1}(x)\frac{d V_1}{dx}(x)\big)=0  \qquad  x\in \big(-\frac{L}{2},\frac{L}{2} \big) \\
   \vspace{0.2cm}
   \rho^{n-1}(-\frac{L}{2})\frac{dV_1}{dx}(-\frac{L}{2})=-\frac{\sigma_1}{\omega_{n-1}}P(D_1)V_1(-\frac{L}{2}) \\
   \rho^{n-1}(\frac{L}{2})\frac{dV_1}{dx}(\frac{L}{2})=\frac{\sigma_1}{\omega_{n-1}}P(D_2)V_1(\frac{L}{2}).
\end{cases}
\end{equation}
For every subsequence $\{ \epsilon_n\}_{n=1}^{\infty}$ such that $\epsilon_n\rightarrow 0$, we have 
\begin{equation*}
u_1^{\epsilon_n}\circ \Phi_{\epsilon_n} \rightharpoonup \overline{V}_1 \quad \text{in} \quad H^1(T_1),
\end{equation*}
where $\overline{V}_1$ is the solution of the equation \eqref{ep2} constantly extended to the variables $x_i$ for $2\leq i\leq n$. 
\end{theorem}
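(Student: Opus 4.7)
The plan is to combine the variational characterisation of $\sigma_1^\epsilon$ with a rescaling of the tube by $\Phi_\epsilon$, producing matching upper/lower bounds and a compactness argument. The intuition is that, for the first nonzero eigenvalue, the optimal profile is almost constant on each bulk $D_i$ (costing no Dirichlet energy) and transitions through the tube with a radially constant, $x_1$-dependent shape; the transition energy is of order $\epsilon^{n-1}$ (the cross-sectional area), while the boundary $L^2$ mass stays $O(1)$ because it is concentrated on $\partial D_1 \cup \partial D_2$.

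\textbf{Upper bound.} Let $V_1$ be a nontrivial solution of \eqref{ep2}. I would take $\varphi_\epsilon \in H^1(\Omega_\epsilon)$ equal to $V_1(-L/2)$ on $D_1$, to $V_1(L/2)$ on $D_2$, and to $V_1(x_1)$ on $T_\epsilon$, then subtract a constant (of order $o(1)$) so that $\varphi_\epsilon$ is orthogonal to constants on $\partial\Omega_\epsilon$. The Dirichlet integral reduces to the tube and equals $\omega_{n-1}\epsilon^{n-1}\int_{-L/2}^{L/2}\rho^{n-1}(V_1')^2\,dx_1 + o(\epsilon^{n-1})$, while $\int_{\partial\Omega_\epsilon}\varphi_\epsilon^2 = P(D_1)V_1(-L/2)^2 + P(D_2)V_1(L/2)^2 + o(1)$ (the lateral tube boundary contributes only $O(\epsilon^{n-2})$). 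An integration by parts using the ODE and the boundary conditions in \eqref{ep2} shows that the two expressions are related exactly by the factor $\sigma_1\epsilon^{n-1}$, which yields $\limsup \sigma_1^\epsilon/\epsilon^{n-1} \le \sigma_1$.

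\textbf{Compactness and weak limit.} The upper bound gives $\int_{\Omega_\epsilon}|\nabla u_1^\epsilon|^2 = \sigma_1^\epsilon = O(\epsilon^{n-1})$, together with $\|u_1^\epsilon\|_{L^2(\partial\Omega_\epsilon)}=1$ and $\int_{\partial\Omega_\epsilon}u_1^\epsilon = 0$. On each fixed domain $D_i$, the Steklov–Poincar\'e inequality provides a constant $\alpha_\epsilon^{(i)}$ with $\|u_1^\epsilon - \alpha_\epsilon^{(i)}\|_{H^1(D_i)} \to 0$; passing to a subsequence, $\alpha_\epsilon^{(i)} \to c_i$. On the tube, let $\tilde u_\epsilon = u_1^\epsilon\circ\Phi_\epsilon$; a direct change of variable gives $\int_{T_1}|\partial_{x_1}\tilde u_\epsilon|^2 + \epsilon^{-2}|\nabla_{x'}\tilde u_\epsilon|^2\,dx = \epsilon^{-(n-1)}\int_{T_\epsilon}|\nabla u_1^\epsilon|^2 = O(1)$, while an $L^2$ bound on $\tilde u_\epsilon$ follows from the traces at $x_1 = \pm L/2$ (matched to $\alpha_\epsilon^{(i)}$ through the attachment disks). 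Thus $\tilde u_\epsilon \rightharpoonup V$ in $H^1(T_1)$; the factor $\epsilon^{-2}$ forces $\nabla_{x'}V \equiv 0$, so $V = V(x_1)$, with $V(-L/2)=c_1$ and $V(L/2)=c_2$ by trace. The global normalisation and orthogonality pass to the limit as $P(D_1)c_1 + P(D_2)c_2 = 0$ and $P(D_1)c_1^2 + P(D_2)c_2^2 = 1$, guaranteeing $V \not\equiv 0$.

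\textbf{Identifying the limit ODE and conclusion.} To recover \eqref{ep2}, I would exploit harmonicity and the Steklov condition. The weak form tested against $\phi(x_1)\in C_c^\infty((-L/2,L/2))$ viewed as an $x'$-independent function on $T_\epsilon$ (extended by $0$), rescaled by $\epsilon^{-(n-1)}$, yields in the limit the interior equation $-\omega_{n-1}(\rho^{n-1}V')' = 0$. For the endpoint conditions, apply the divergence theorem on $D_i$: $\int_{\text{disk}_i}\partial_{x_1}u_1^\epsilon\,dx' = \mp \sigma_1^\epsilon \int_{\partial D_i\cap\partial\Omega_\epsilon}u_1^\epsilon$. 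Dividing by $\epsilon^{n-1}$ and letting $\epsilon \to 0$, the right-hand side tends to $\mp \sigma_1 P(D_i)c_i$ (with $\sigma_1 := \liminf \sigma_1^\epsilon/\epsilon^{n-1}$), while the left-hand side tends to $\omega_{n-1}\rho^{n-1}(\pm L/2)V'(\pm L/2)$; this is exactly the boundary system in \eqref{ep2}. Since such a $\sigma_1 > 0$ is unique, we conclude $\sigma_1^\epsilon/\epsilon^{n-1} \to \sigma_1$ and $\tilde u_\epsilon \rightharpoonup \overline{V}_1$ along the whole sequence.

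\textbf{Main obstacle.} The delicate step is the flux balance that pins down the boundary conditions: it couples the rescaled weak limit on the tube (information at scale $\epsilon^{n-1}$) with the $L^2(\partial D_i)$ behaviour in the bulk (information at scale $1$). One must show that the $\epsilon$-scale flux through the attachment disk is captured by $V'(\pm L/2)$ rather than by lateral contributions, which requires strong trace convergence of $\tilde u_\epsilon$ on the cross-sections $\{x_1 = \pm L/2\}$ and a careful quantitative control of the Steklov–Poincar\'e closeness $u_1^\epsilon \approx \alpha_\epsilon^{(i)}$ on the interface disk, which shrinks to a point.
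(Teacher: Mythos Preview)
Your overall strategy is the same as the paper's: a test-function upper bound, rescaling $\tilde u_\epsilon=u_1^\epsilon\circ\Phi_\epsilon$ on the tube, compactness in $H^1(T_1)$, and passage to the limit in the weak formulation. Two steps in your sketch, however, are not justified as written and are exactly where the paper invests extra work.

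\textbf{Matching $V(\pm L/2)$ with the bulk constants.} You claim $V(-L/2)=c_1$ ``by trace.'' Weak $H^1(T_1)$ convergence does give $\tilde u_\epsilon|_{\{x_1=-L/2\}}\to V(-L/2)$ in $L^2$ of the fixed cross-section, but you still have to identify this limit with $c_1$. On the $D_1$ side you only know $u_1^\epsilon\to c_1$ in $L^2(\partial D_1)$; after blowing the attachment disk up by $\epsilon^{-1}$ this gives no information (the disk has measure $\sim\epsilon^{n-1}$). The paper resolves this by working on an \emph{extended} tube $E_1$ reaching a fixed distance into $D_1$, where harmonicity plus the $H^1$ bound yield locally uniform convergence $u_1^\epsilon\to c_1$; then $V\equiv c_1$ on the extension and continuity of $V$ in $x_1$ gives $V(-L/2)=c_1$. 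Your ``matched through the attachment disks'' does not capture this, and your stated obstacle (``strong trace convergence of $\tilde u_\epsilon$'') is about the function, not the mechanism that pins its value.

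\textbf{Recovering the boundary conditions from a flux balance.} Your divergence-theorem identity on $D_i$ is correct, but to pass to the limit you need
\[
\epsilon^{-(n-1)}\int_{\{x_1=-L/2\}\times B_{\epsilon\rho(-L/2)}}\partial_{x_1}u_1^\epsilon\,dx'
=\int_{B_{\rho(-L/2)}}\partial_{x_1}\tilde u_\epsilon(-L/2,x')\,dx'
\longrightarrow \omega_{n-1}\rho^{n-1}(-L/2)\,V'(-L/2),
\]
i.e.\ convergence of the \emph{trace of the derivative} on a cross-section. Weak $H^1(T_1)$ convergence gives only weak $L^2(T_1)$ convergence of $\partial_{x_1}\tilde u_\epsilon$, which does not control hyperplane traces. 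The paper avoids this by testing the Steklov weak form against a global $x_1$-only function $\psi$ with $\psi\equiv1$ on $D_1$ and $\psi\equiv0$ on $D_2$; then only a volume integral $\int_{T_1}\partial_{x_1}\tilde u_\epsilon\,\psi'$ appears, which \emph{does} pass to the limit, and an integration by parts (using the already established interior equation $(\rho^{n-1}V')'=0$) produces the flux $\rho^{n-1}(-L/2)V'(-L/2)$. Your route can be salvaged by first showing the cross-sectional flux is (up to $o(\epsilon^{n-1})$) independent of $x_1$ and then averaging in $x_1$, but that step is missing.
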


Let us now comment on the existing literature. A similar problem for the eigenvalues of the Neumann Laplacian has been deeply studied, in particular in a series of papers by S. Jimbo. 
A first characterization of the eigenvalues in the Neumann case was given in \cite{Be73}. In \cite{Ji89} there is a complete description of the behaviour of the Neumann eigenfunctions and in \cite{JM92} there is a complete description of the Neumann eigenvalues when the channel collapse to a segment. 
Other references for the Neumann problem in dumbbell shape domains are \cite{Ar95,Ji188,Ji288,Ji93,Ji04}. These results turn out to be very useful in the study of the solutions of reaction diffusion systems in singular domains (see for instance \cite{ACL06,Fa90,HV84,Mo90}). 
Perturbations of the geometric domain for the Steklov problem have been considered in \cite{GP10}. For an asymptotic behaviour  of the Steklov problem on a singular perturbation somehow close to our analysis, we refer to the result of Nazarov \cite{Na12} where he studies a two dimensional domain obtained by the junction of two rectangles (see also \cite{Na14} for a perturbation by a small whole). 
At last, let us mention that in the case of Dirichlet boundary conditions, singular perturbations of this type are less interesting, since the spectrum is stable to this geometric perturbation.  Indeed, it can be proved that the dumbbell $\gamma$-converges to the union of the two sets $D_1\cup D_2$ which
means that its Dirichlet eigenvalues converge to the union of the spectrum of $D_1$ and $D_2$. We refer
to the books \cite{BuBu} and \cite{HPi} for more details.

\section{The case $n=2$. Proof of Theorem \ref{th1}.}
In this section we will prove Theorem \ref{th1}. 
We define $\partial T_{\epsilon}^e\subset \partial \Omega_{\epsilon}$ in the following way 

\begin{equation*}
\partial T_{\epsilon}^e=\big\{x=(x_1,x')\in \mathbb{R}^n | -\frac{L}{2}\leq x_1\leq \frac{L}{2}, x'=\epsilon|\rho(x_1)| \big\}
\end{equation*}

In two dimensions, the set  $\partial T_{\epsilon}^e$ is not connected and we decompose it 
\begin{equation*}
\partial T_{\epsilon}^e=\Gamma_{\epsilon}^-\cup\Gamma_{\epsilon}^+,
\end{equation*}
where 
\begin{align*}
\Gamma_{\epsilon}^+&=\big\{x=(x_1,x_2)\in \mathbb{R}^n | -\frac{L}{2}\leq x_1\leq \frac{L}{2}, x_2=\epsilon\rho(x_1) \big\}\\
\Gamma_{\epsilon}^-&=\big\{x=(x_1,x_2)\in \mathbb{R}^n | -\frac{L}{2}\leq x_1\leq \frac{L}{2}, x_2=-\epsilon\rho(x_1) \big\}.
\end{align*}

\subsection{Upper bound for Steklov eigenvalues}\label{sec3.1}
First of all we prove that there exists a constant $C>0$ such that the following upper bound holds for $\epsilon $ small enough:
\begin{equation}\label{stekbound1}
\sigma_k^{\epsilon}\leq C \epsilon.
\end{equation}
Precisely, we prove the following.
\begin{lemma} Let $\mu_k$ the $k-$th eigenvalue of \eqref{ep1} then we have 
\begin{equation}\label{stekin1}
\sigma_k^{\epsilon}\leq \mu_k\epsilon+o(\epsilon).
\end{equation}
\end{lemma}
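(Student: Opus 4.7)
The plan is to obtain the upper bound from the min-max characterization of Steklov eigenvalues by constructing a $k$-dimensional trial space built from the first $k+1$ eigenfunctions of the one-dimensional dynamical problem \eqref{ep1}, extended to the full dumbbell in the obvious way.

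Concretely, let $V_0, V_1, \dots, V_k$ denote the first $k+1$ eigenfunctions of \eqref{ep1}, with $\mu_0 = 0$ so that $V_0$ is constant, normalized so that
\[
\int_{-L/2}^{L/2} V_i V_j\, dx_1 + \tfrac{1}{2}P(D_1)V_i(-\tfrac{L}{2})V_j(-\tfrac{L}{2}) + \tfrac{1}{2}P(D_2)V_i(\tfrac{L}{2})V_j(\tfrac{L}{2}) = \delta_{ij}.
\]
For each $j$ I would define $\widetilde V_j \in H^1(\Omega_\epsilon)$ by setting it equal to $V_j(x_1)$ on the tube $T_\epsilon$, equal to the constant $V_j(-L/2)$ on $D_1$ and equal to $V_j(L/2)$ on $D_2$; this function is in $H^1$ because at each junction $x_1 = \pm L/2$ the one-dimensional eigenfunction is constant on the interface disk. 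A direct computation in two dimensions gives
\[
\int_{\Omega_\epsilon} \nabla \widetilde V_i \cdot \nabla \widetilde V_j\, dx = 2\epsilon \int_{-L/2}^{L/2} \rho(x_1) V_i'(x_1) V_j'(x_1)\, dx_1 = 2\epsilon\, \mu_i \delta_{ij},
\]
while the boundary integral, split into the contribution of $\partial D_1$, of $\partial D_2$, and of $\Gamma_\epsilon^\pm$ (where $d\mathcal H^1 = \sqrt{1+\epsilon^2\rho'(x_1)^2}\,dx_1$), equals $2\delta_{ij} + o(1)$ as $\epsilon \to 0$, with the error coming from the small disks removed from $\partial D_i$ and from the $\epsilon$-correction of the arc-length.

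Next, I would produce a $k$-dimensional trial space orthogonal to constants on $\partial\Omega_\epsilon$ by subtracting from each $\widetilde V_j$, $j=1,\dots,k$, the boundary average
\[
c_j^\epsilon = \frac{1}{\mathcal H^1(\partial \Omega_\epsilon)} \int_{\partial \Omega_\epsilon} \widetilde V_j\, d\mathcal H^1.
\]
The orthogonality relation between $V_j$ and the constant eigenfunction $V_0$ in the dynamical problem yields $2\int V_j + P(D_1)V_j(-L/2) + P(D_2)V_j(L/2) = 0$, hence $c_j^\epsilon = o(1)$. Subtracting constants does not affect gradients, and in the boundary $L^2$ matrix it introduces only an $o(1)$ perturbation of an already non-degenerate quadratic form. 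Inserting an arbitrary element $u = \sum_{j=1}^k a_j(\widetilde V_j - c_j^\epsilon)$ into the Rayleigh quotient, the previous identities give
\[
\frac{\int_{\Omega_\epsilon}|\nabla u|^2\, dx}{\int_{\partial \Omega_\epsilon} u^2\, d\mathcal H^1} = \epsilon\, \frac{\sum_{j=1}^k \mu_j a_j^2}{\sum_{j=1}^k a_j^2 + o(1)\|a\|^2} \le \mu_k\, \epsilon + o(\epsilon),
\]
uniformly in $a = (a_1,\dots,a_k)$, and the min-max characterization of $\sigma_k^\epsilon$ yields \eqref{stekin1}.

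The routine part is the exact evaluation of the two bilinear forms on the $\widetilde V_j$'s and the identification of the leading coefficients with those of \eqref{ep1}. The main obstacle I foresee is bookkeeping the lower-order terms: one must check that the $\epsilon$-corrections from the boundary element on $\Gamma_\epsilon^\pm$, the deletion of the junction disks from $\partial D_i$, and the small subtracted constants $c_j^\epsilon$ all contribute perturbations of the Gram matrix that are $o(1)$ relative to the identity, so that the supremum over the trial space is genuinely $(\mu_k + o(1))\epsilon$ rather than being spoiled by a near-degenerate direction.
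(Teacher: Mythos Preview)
Your proposal is correct and follows essentially the same argument as the paper: both build the trial space from the first $k$ eigenfunctions of \eqref{ep1} extended constantly into $D_1$, $D_2$, subtract their boundary averages (which are $o(1)$ thanks to the orthogonality of the $V_j$ to constants in the dynamical inner product), and then check that the Dirichlet form is exactly $2\epsilon\,\mu_i\delta_{ij}$ while the boundary Gram matrix is $2\delta_{ij}+o(1)$. The paper carries out precisely the bookkeeping you anticipate in your last paragraph, and no additional idea is needed.
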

\begin{proof}
In order to obtain this upper bound, we use the variational formulation  
\begin{equation*}
\sigma_k^{\epsilon}=\inf_{E_k} \sup_{0\neq u\in E_k } \frac{\int_{\Omega_{\epsilon}}|\nabla u|^2dx}{\int_{\partial \Omega_{\epsilon}}u^2 ds},
\end{equation*}
where the infimum is taken over all $k-$dimensional subspace of the Sobolev space $H^1(\Omega_{\epsilon})$ which are orthogonal to constants on $\partial\Omega_{\epsilon}$. We   choose   a particular subspace $E_k$ in order to obtain the upper bound.

We consider the eigenvalue problem \eqref{ep1} and take a basis of eigenfunctions $\{\phi_i\}_{i\in \mathbb{N}}$ normalized in the following way 
\begin{equation}\label{denn1}
\int_{\frac{-L}{2}}^{\frac{L}{2}}\phi_i \phi_j dx_1+\frac{1}{2}P(D_2)\phi_i\big (\frac{L}{2}\big )  \phi_j \big (\frac{L}{2}\big )+\frac{1}{2}P(D_1)\phi_i\big (-\frac{L}{2}\big ) \big (-\frac{L}{2}\big )=\delta_{ij}.
\end{equation}

Then
\begin{align}
\int_{\frac{-L}{2}}^{\frac{L}{2}}\rho \phi_i'\phi_j'dx_1&=0 \ \mbox{if $i\not= j$}\label{den2}\\
\int_{\frac{-L}{2}}^{\frac{L}{2}}\rho (\phi_i')^2dx_1&=\mu_i\label{den3}.
\end{align}
From the variational formulation of the eigenvalue problem we know that $\forall v\in H^1(-\frac{L}{2},\frac{L}{2})$ 
\begin{equation*}
\int_{\frac{-L}{2}}^{\frac{L}{2}}\rho \phi_i'v'dx_1=\frac{\mu_i}{2}P(D_2)\phi_i\big (\frac{L}{2}\big )v\big (\frac{L}{2}\big )+\frac{\mu_i}{2}P(D_1)\phi_i\big (-\frac{L}{2}\big )v\big (-\frac{L}{2}\big )+\mu_i\int_{\frac{-L}{2}}^{\frac{L}{2}}\phi_ivdx_1, 
\end{equation*}
we choose $v=1$ we obtain:
\begin{equation}\label{mean1}
\frac{1}{2}P(D_2)\phi_i\big (\frac{L}{2}\big )+\frac{1}{2}P(D_1)\phi_i\big (-\frac{L}{2}\big )+\int_{\frac{-L}{2}}^{\frac{L}{2}}\phi_idx_1=0.
\end{equation}

We now introduce our test functions that are the basis of our test subspace $E_k$. We define
\begin{equation*}
\Phi_i=\begin{cases}
\vspace{0.2cm}
  \phi_i(-\frac{L}{2}) \qquad \text{if}\,\, (x_1,x_2)\in D_1 \\
   \vspace{0.2cm}
  \phi_i(x_1) \qquad \,\,\, \text{if}\,\, (x_1,x_2)\in T_{\epsilon} \\
   \phi_i(\frac{L}{2}) \qquad \,\,\, \text{if}\,\, (x_1,x_2)\in D_2,
\end{cases}
\end{equation*}
and we introduce its mean value
\begin{equation*}
m_i^{\epsilon}=\frac{1}{|\partial \Omega_{\epsilon}|}\int_{\partial \Omega_{\epsilon}}\Phi_i ds.
\end{equation*}
The mean goes to zero if $\epsilon\rightarrow 0$, indeed
\begin{equation*}
\int_{\partial \Omega_{\epsilon}}\Phi_i ds=P(D_2)\phi_i\big (\frac{L}{2}\big )+P(D_1)\phi_i\big (-\frac{L}{2}\big )+2\int_{\frac{-L}{2}}^{\frac{L}{2}}\phi_i\sqrt{1+\epsilon^2\rho'^2}dx_1,
\end{equation*}
from equation \eqref{mean1}, dominated convergence and the fact that $|\partial \Omega_{\epsilon}|\rightarrow P(D_1)+P(D_2)+2L>0$ we obtain 
\begin{equation}\label{meanto0}
m_i^{\epsilon}\rightarrow 0 \quad \forall i\in \mathbb{N}.
\end{equation}

We introduce now our basis elements
\begin{equation*}
\Psi_i=\Phi_i-m_i^{\epsilon},
\end{equation*}
and our subspace will be $E_k=\text{Span}<\Psi_1,...,\Psi_k>$. Now we compute all the quantities we need for the Rayleigh quotient. We start by  the numerator, if $i\not= j$:
\begin{equation*}
\int_{\Omega_{\epsilon}}\nabla\Psi_i\cdot\nabla\Psi_j ds=\int_{T_{\epsilon}}\nabla\Phi_i\cdot\nabla\Phi_j ds=2\epsilon\int_{\frac{-L}{2}}^{\frac{L}{2}}\rho \phi_i'\phi_j'dx_1=0,
\end{equation*}
where the last equality is given by \eqref{den2}, and
\begin{equation*}
\int_{\Omega_{\epsilon}}|\nabla\Psi_i|^2ds=\int_{T_{\epsilon}}|\nabla\Phi_i|^2ds=2\epsilon\int_{\frac{-L}{2}}^{\frac{L}{2}}\rho (\phi_i')^2dx_1=2\epsilon\mu_i,
\end{equation*}
where the last equality is given by \eqref{den3}. Now we compute the terms in the denominator,
\begin{align*}
f_{i,j}(\epsilon)&:=\int_{\partial\Omega_{\epsilon}}\Psi_i\Psi_j ds=\int_{\partial\Omega_{\epsilon}}(\Phi_i-m_i^{\epsilon})(\Phi_j-m_j^{\epsilon})ds=
\int_{\partial\Omega_{\epsilon}}\Phi_i\Phi_j ds-m_i^{\epsilon}m_j^{\epsilon}P(\Omega_{\epsilon})\\
&=\frac{1}{2}P(D_2)\phi_i\big (\frac{L}{2}\big )\phi_j\big (\frac{L}{2}\big )+\frac{1}{2}P(D_1)\phi_i\big (-\frac{L}{2}\big )\phi_j\big (-\frac{L}{2}\big )\\
&\qquad \qquad  \qquad  \qquad \qquad \qquad  \qquad+\int_{\frac{-L}{2}}^{\frac{L}{2}}\phi_i\phi_j\sqrt{1+\epsilon^2\rho'^2}dx_1-m_i^{\epsilon}m_j^{\epsilon}P(\Omega_{\epsilon}).
\end{align*}
From \eqref{meanto0}, \eqref{denn1} and the dominated convergence we obtain
\begin{equation}\label{den4}
\lim_{\epsilon\rightarrow 0}f_{i,j}(\epsilon)=0 \quad\, i\neq j.
\end{equation}
Similarly,
\begin{align*}
f_{i,i}(\epsilon)&:=\int_{\partial\Omega_{\epsilon}}\Psi_i^2 ds=\int_{\partial\Omega_{\epsilon}}(\Phi_i-m_i^{\epsilon})^2ds= \int_{\partial\Omega_{\epsilon}}\Phi_i^2 ds-(m_i^{\epsilon})^2P(\Omega_{\epsilon})\\
&=2\int_{\frac{-L}{2}}^{\frac{L}{2}}\phi_i^2\sqrt{1+\epsilon^2\rho'^2}dx_1+P(D_2)\phi_i\big (\frac{L}{2}\big )^2+P(D_1)\phi_i\big (-\frac{L}{2}\big )^2-(m_i^{\epsilon})^2P(\Omega_{\epsilon}),
\end{align*}
now from \eqref{meanto0}, \eqref{denn1} and dominated convergence we obtain,
\begin{equation}\label{den5}
\lim_{\epsilon\rightarrow 0}f_{i,i}(\epsilon)=2.
\end{equation}
Now if we use the test subspace $E_k$ in the variational characterization we obtain
\begin{equation*}
\sigma_k^{\epsilon}\leq \sup_{(x_1,...,x_k)\in \mathbb{R}^k}\frac{2\epsilon\sum_{i=1}^kx_i^2\mu_i}{\sum_{i=1}^kx_i^2f_{i,i}(\epsilon)+\sum_{i<j}2x_ix_jf_{i,j}(\epsilon)},
\end{equation*}
if $\epsilon$ is small enough from \eqref{den4} and \eqref{den5} we obtain
\begin{equation}
\sigma_k^{\epsilon}\leq \sup_{(x_1,...,x_k)\in \mathbb{R}^k}\frac{\epsilon\sum_{i=1}^kx_i^2\mu_i}{\sum_{i=1}^kx_i^2}+o(\epsilon)=\mu_k\epsilon+o(\epsilon).
\end{equation}
\end{proof}
\subsection{Convergence of eigenfunctions}\label{sec3.2}
We start by showing the convergence on the two regions $D_i$ where $i=1,2$
\begin{lemma}\label{lemc1}
Let $k\geq 1$ we have (up to a sub-sequence that we still denote by $u_k^{\epsilon}$) 
\begin{align*}
u_k^{\epsilon} &\rightharpoonup c_{i,k} \quad \text{in} \quad H^1(D_i),\\
u_k^{\epsilon} &\rightarrow c_{i,k} \quad \text{locally uniformly in} \quad D_i.
\end{align*} 
where $c_{i,k}\in \RR$ are constants
\end{lemma}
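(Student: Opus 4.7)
The plan is to combine the gradient bound coming from the Steklov identity with the $L^2$-normalization on $\partial\Omega_{\epsilon}$, then to upgrade to an $H^1(D_i)$ bound via a Poincar\'e--Wirtinger inequality on the fixed Lipschitz set $D_i$, and finally to pass from weak $H^1$ convergence to local uniform convergence through interior regularity for harmonic functions. Testing the Steklov equation against $u_k^{\epsilon}$ itself, and using the normalization $\|u_k^{\epsilon}\|_{L^2(\partial\Omega_{\epsilon})}=1$, yields
\[
\int_{\Omega_{\epsilon}}|\nabla u_k^{\epsilon}|^2\,dx \;=\; \sigma_k^{\epsilon} \;\leq\; C\epsilon,
\]
by the previous lemma. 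In particular $\|\nabla u_k^{\epsilon}\|_{L^2(D_i)}\to 0$ as $\epsilon\to 0$, which immediately forces any weak $H^1(D_i)$ limit to be locally constant on each connected component of $D_i$.

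To obtain the bound in $H^1(D_i)$ we need to control the $L^2(D_i)$ norm. I fix, once and for all, a portion $\Sigma_i\subset\partial D_i$ lying strictly away from the junction with the tube, for instance $\Sigma_i = \partial D_i\setminus\{|x'|\leq \delta\}$. For every $\epsilon$ small enough $\Sigma_i\subset \partial D_i\cap\partial\Omega_{\epsilon}$, so by Cauchy--Schwarz and the boundary normalization,
\[
m_{i,k}^{\epsilon}\;:=\;\frac{1}{\Hm(\Sigma_i)}\int_{\Sigma_i}u_k^{\epsilon}\,d\Hm,
\qquad
|m_{i,k}^{\epsilon}|^2 \;\leq\; \frac{1}{\Hm(\Sigma_i)}\int_{\Sigma_i}(u_k^{\epsilon})^2\,d\Hm \;\leq\; \frac{1}{\Hm(\Sigma_i)}.
\]
The Poincar\'e--Wirtinger inequality on the fixed Lipschitz set $D_i$ with mean taken over $\Sigma_i$ then gives
\[
\|u_k^{\epsilon}-m_{i,k}^{\epsilon}\|_{L^2(D_i)} \;\leq\; C(D_i,\Sigma_i)\,\|\nabla u_k^{\epsilon}\|_{L^2(D_i)} \;\longrightarrow\; 0.
\]
Hence $\{u_k^{\epsilon}\}$ is bounded in $H^1(D_i)$; up to extraction $m_{i,k}^{\epsilon}\to c_{i,k}\in\RR$, so $u_k^{\epsilon}\rightharpoonup c_{i,k}$ in $H^1(D_i)$ (strongly, in fact, because the gradients vanish in $L^2$).

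For the local uniform convergence, I would use that $u_k^{\epsilon}-c_{i,k}$ is harmonic in $D_i$. Standard interior mean-value/gradient estimates for harmonic functions yield, for every compact $K\Subset D_i$,
\[
\sup_{K}|u_k^{\epsilon}-c_{i,k}| \;\leq\; C(K,D_i)\,\|u_k^{\epsilon}-c_{i,k}\|_{L^2(D_i)} \;\longrightarrow\; 0.
\]
The only delicate point in the whole argument is ensuring the Poincar\'e--Wirtinger constant is independent of $\epsilon$: this is precisely why $\Sigma_i$ is chosen fixed and strictly separated from the tube junction, so the inequality is applied on the unchanging set $D_i$ with an unchanging reference portion $\Sigma_i$. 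Everything else reduces to classical harmonic regularity on $D_i$ and a diagonal extraction of subsequences.
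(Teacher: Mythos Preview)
Your argument is correct, and it differs from the paper's in how the uniform $L^2(D_i)$ bound is obtained. The paper applies a Poincar\'e--Friedrichs inequality on the full moving domain $\Omega_\epsilon$,
\[
\int_{\Omega_\epsilon}(u_k^\epsilon)^2\,dx \;\leq\; C_{\Omega_\epsilon}\Big(\int_{\Omega_\epsilon}|\nabla u_k^\epsilon|^2\,dx+\int_{\partial\Omega_\epsilon}(u_k^\epsilon)^2\,d\Hm\Big),
\]
and then has to show $C_{\Omega_\epsilon}$ stays bounded; this is done by identifying $1/C_{\Omega_\epsilon}$ with the first Robin eigenvalue $\lambda_1(\Omega_\epsilon,1)$ and invoking the Bossel--Daners (Faber--Krahn for Robin) inequality together with monotonicity on balls. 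Your route bypasses this machinery entirely: by freezing a boundary portion $\Sigma_i\subset\partial D_i$ away from the junction and using the Poincar\'e--Wirtinger inequality on the \emph{fixed} Lipschitz set $D_i$, the constant is manifestly $\epsilon$-independent, and the only input from the Steklov normalization is the trivial bound $\int_{\Sigma_i}(u_k^\epsilon)^2\leq 1$. This is more elementary and more robust (it would work verbatim in any dimension and for rather general tube profiles). The paper's approach, on the other hand, yields in one stroke a bound on $\|u_k^\epsilon\|_{L^2(\Omega_\epsilon)}$ over the whole dumbbell, not just on $D_i$. For the local uniform convergence, your direct interior $L^2\to L^\infty$ estimate for the harmonic function $u_k^\epsilon-c_{i,k}$ is slightly cleaner than the paper's, which first proves a uniform $L^\infty$ bound on compacts and then passes to a subsequence.
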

\begin{proof}
First of all we know that $\sigma_k^{\epsilon}\rightarrow 0$ as $\epsilon$ goes to $0$, and from $||u_k^{\epsilon}||_{L^2(\partial\Omega_{\epsilon})}=1$ we conclude that
\begin{equation*}
\lim_{\epsilon\rightarrow 0}\int_{\Omega_{\epsilon}}|\nabla u_k^{\epsilon}|^2dx=0,
\end{equation*}
so it means that $||\nabla u_k^{\epsilon} ||_{L^2(D_1)}\leq ||\nabla u_k^{\epsilon}||_{L^2(\Omega_{\epsilon})}\leq C$. Now we want to bound $||u_k^{\epsilon}||_{L^2(D_1)}$ uniformly on $\epsilon$. Using Poincar\'e-Friedrichs inequality we obtain
\begin{equation*}
\int_{D_1} (u_k^{\epsilon})^2dx\leq \int_{\Omega_{\epsilon}} (u_k^{\epsilon})^2dx\leq C_{\Omega_{\epsilon}} \big [ \int_{\Omega_{\epsilon}} |\nabla u_k^{\epsilon}|^2dx+\int_{\partial\Omega_{\epsilon}} (u_k^{\epsilon})^2 ds \big ],
\end{equation*}
we know that $||u_k^{\epsilon}||_{L^2(\partial\Omega_{\epsilon})}=1$ and  $||\nabla u_k^{\epsilon}||_{L^2(\Omega_{\epsilon})}\leq C$, we have only to check that $C_{\Omega_{\epsilon}}\leq C\leq \infty$ if $\epsilon$ is small enough.

We have the following variational characterization for the constant $C_{\Omega_{\epsilon}}$
\begin{equation*}
\frac{1}{C_{\Omega_{\epsilon}}}=\inf_{v\in H^1(\Omega_{\epsilon})} \frac{\int_{\Omega_{\epsilon}} |\nabla v|^2dx+\int_{\partial\Omega_{\epsilon}} v^2ds}{\int_{\Omega_{\epsilon}} v^2dx}=\lambda_1(\Omega_{\epsilon},1),
\end{equation*}
 where $\lambda_1(\Omega_{\epsilon},1)$ is the first Robin eigenvalue with boundary parameter $1$ (see \cite{Da06}). We denote by $B_{R_{\epsilon}}$ the ball with the same measure of $\Omega_{\epsilon}$, now, using the Bossel-Daners inequality and the rescaling property of the Robin eigenvalue (see \cite{Da06}), we obtain
 \begin{equation*}
 \frac{1}{C_{\Omega_{\epsilon}}}=\lambda_1(\Omega_{\epsilon},1)\geq \lambda_1(B_{R_{\epsilon}},1)=\frac{1}{R_{\epsilon}^2}\lambda_1(B_1,R_{\epsilon}).
 \end{equation*}
Now, for $\epsilon$ small enough, we have $|D_1|+|D_2|\leq |\Omega_{\epsilon}|\leq |D_1|+|D_2|+1 $ so, by monotonicity of the Robin eigenvalue  on balls we finally obtain
 \begin{equation*}
 \frac{1}{C_{\Omega_{\epsilon}}}=\lambda_1(\Omega_{\epsilon},1)\geq \lambda_1(B_{R_{\epsilon}},1)=\frac{\pi}{|D_1|+|D_2|+1}\lambda_1\big (B_1,\sqrt{\frac{|D_1|+|D_2|}{\pi}}\big )>0.
 \end{equation*}
Finally we conclude that $C_{\Omega_{\epsilon}}\leq C<\infty$ for $\epsilon$ small enough. We conclude that
\begin{equation*}
||u_k^{\epsilon}||_{H^1(D_1)}\leq C<\infty,
\end{equation*}
so exist a sequence, that we still denote by $u_k^{\epsilon}$, and $u_k^0\in H^1(D_1)$ such that
\begin{equation*}
u_k^{\epsilon} \rightharpoonup u_k^0 \quad \text{in} \quad H^1(D_1).
\end{equation*}
 We also know that $||\nabla u_k^{\epsilon}||_{L^2(D_1)}\rightarrow 0$, so we conclude that there exists a constant $c_{1,k}\in \mathbb{R}$ such that
\begin{equation*}
u_k^{\epsilon} \rightharpoonup c_{1,k} \quad \text{in} \quad H^1(D_1).
\end{equation*}
We can improve this convergence since $u_k^{\epsilon}$ are harmonic. Fix a compact set $K\subset D_1$ and take $\delta >0$ such that $B_{\delta}(x)\subset D_1$ for all $x\in K$. By the average properties of harmonic functions and the Cauchy-Schwartz inequality we have:
\begin{equation*}
|u_k^{\epsilon}(x)| =\frac{1}{|B_{\delta}(x)|} \int_{B_{\delta}(x)} |u_k^{\epsilon}(y)|dy \leq |B_{\delta}(x)|^{-\frac{1}{2}}||u_k^{\epsilon}||_{L^2(D_1)}\leq C.
\end{equation*}
Up to a subsequence, we get that $u_k^{\epsilon}$ uniformly converges on $K$ to a constant. We conclude that for $i=1,2$
\begin{align*}
u_k^{\epsilon} &\rightharpoonup c_{i,k} \quad \text{in} \quad H^1(D_i),\\
u_k^{\epsilon} &\rightarrow c_{i,k} \quad \text{locally uniformly in} \quad D_i.
\end{align*}
\end{proof}

Now we study the behaviour of the eigenfunctions in the tube $T_{\epsilon}$. We define the following functions
\begin{equation*}
v_k^{\epsilon}(x_1,x_2)=u_k^{\epsilon}(x_1,\epsilon x_2)\quad \forall \, (x_1,x_2)\in T_1
\end{equation*}
\begin{lemma}\label{lemc2} Let $k\geq 1$. There exists $\overline{V}_k\in H^1(T_1)$ such that 
\begin{equation*}
v_k^{\epsilon} \rightharpoonup \overline{V}_k \quad \text{in} \quad H^1(T_1),
\end{equation*}
(up to a sub-sequence, still denoted by $v_k^{\epsilon}$), where $\overline{V}_k$ depends only on the variable $x_1$.
\end{lemma}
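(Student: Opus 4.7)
The plan is to establish a uniform $H^1(T_1)$ bound on $v_k^{\epsilon}$ (which yields a weakly convergent subsequence), and then observe that the $x_2$-derivative of $v_k^{\epsilon}$ actually tends to zero in $L^2$, forcing the limit to be $x_2$-independent.

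First I would rewrite norms via the change of variables $y_2 = \epsilon x_2$. A direct computation gives
\begin{equation*}
\int_{T_1} (\partial_1 v_k^{\epsilon})^2 dx = \frac{1}{\epsilon}\int_{T_\epsilon}(\partial_1 u_k^{\epsilon})^2 dy, \qquad \int_{T_1}(\partial_2 v_k^{\epsilon})^2 dx = \epsilon \int_{T_\epsilon}(\partial_2 u_k^{\epsilon})^2 dy.
\end{equation*}
Using $u_k^{\epsilon}$ as a Steklov eigenfunction together with the upper bound $\sigma_k^{\epsilon}\le C\epsilon$ proved in Section \ref{sec3.1} and the normalization $\|u_k^{\epsilon}\|_{L^2(\partial\Omega_{\epsilon})}=1$, one obtains $\int_{T_\epsilon}|\nabla u_k^{\epsilon}|^2\le \int_{\Omega_\epsilon}|\nabla u_k^{\epsilon}|^2 = \sigma_k^{\epsilon}\le C\epsilon$. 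Plugging this in yields
\begin{equation*}
\int_{T_1} |\nabla v_k^{\epsilon}|^2 dx \leq C, \qquad \int_{T_1} (\partial_2 v_k^{\epsilon})^2 dx \leq C\epsilon^2 \to 0.
\end{equation*}

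The nontrivial point is the $L^2(T_1)$ bound, which I expect to be the main obstacle since no direct a priori bound on $\|u_k^{\epsilon}\|_{L^2(T_{\epsilon})}$ is available. The idea is to bridge the bulk and boundary terms slicewise: for each $x_1 \in (-L/2,L/2)$ and $x_2\in(-\epsilon\rho(x_1),\epsilon\rho(x_1))$, write
\begin{equation*}
u_k^{\epsilon}(x_1,x_2) = u_k^{\epsilon}(x_1,\epsilon\rho(x_1)) - \int_{x_2}^{\epsilon\rho(x_1)} \partial_2 u_k^{\epsilon}(x_1,t)\,dt,
\end{equation*}
square, apply Cauchy--Schwarz, integrate in $x_2$ over the slice, and then in $x_1$. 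Using $\sqrt{1+\epsilon^2\rho'^2}\sim 1$ on $\Gamma_\epsilon^+$ and $\|u_k^{\epsilon}\|_{L^2(\partial\Omega_{\epsilon})}=1$, one obtains
\begin{equation*}
\int_{T_\epsilon}(u_k^{\epsilon})^2\,dy \leq C\epsilon \int_{\Gamma_\epsilon^+}(u_k^{\epsilon})^2\,ds + C\epsilon^2\int_{T_\epsilon}|\nabla u_k^{\epsilon}|^2\,dy \leq C\epsilon,
\end{equation*}
which by the change of variables translates into $\|v_k^{\epsilon}\|_{L^2(T_1)}^2 \leq C$.

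Combining the three estimates produces $\|v_k^{\epsilon}\|_{H^1(T_1)} \leq C$, so by the weak compactness of the unit ball in $H^1(T_1)$ we may extract a subsequence converging weakly to some $\overline V_k\in H^1(T_1)$. Finally, the second estimate above gives $\partial_2 v_k^{\epsilon}\to 0$ strongly in $L^2(T_1)$, while the weak convergence of $v_k^{\epsilon}$ in $H^1$ gives $\partial_2 v_k^{\epsilon}\rightharpoonup \partial_2\overline V_k$ in $L^2(T_1)$. By uniqueness of weak limits $\partial_2\overline V_k = 0$ in the sense of distributions, so $\overline V_k$ depends only on $x_1$, completing the proof.
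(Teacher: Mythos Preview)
Your proof is correct and follows essentially the same approach as the paper: bound the gradient via the change of variables and the upper bound $\sigma_k^\epsilon\le C\epsilon$, bound the $L^2$ norm by controlling bulk mass through the boundary trace plus the gradient, and then use $\|\partial_2 v_k^\epsilon\|_{L^2}\to 0$ to conclude $x_2$-independence of the weak limit. The only cosmetic difference is in the $L^2$ step: the paper applies the Poincar\'e--Friedrichs inequality on the fixed domain $T_1$ with boundary portion $\Gamma_1^+\cup\Gamma_1^-$ and then compares $\int_{\Gamma_1^\pm}(v_k^\epsilon)^2\,ds$ with $\int_{\Gamma_\epsilon^\pm}(u_k^\epsilon)^2\,ds$, whereas you prove the same inequality by hand on $T_\epsilon$ via the slicewise fundamental-theorem-of-calculus argument --- these are two presentations of the same estimate.
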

\begin{proof}
We start with the bound of $||\nabla v_k^{\epsilon}||_{L^2(T_1)}$ 
\begin{equation*}
\int_{T_1}|\nabla v_k^{\epsilon}|^2 dx\leq \int_{T_1} \Big( \frac{\partial v_k^{\epsilon}}{\partial x_1} \Big )^2+\frac{1}{\epsilon^2} \Big( \frac{\partial v_k^{\epsilon}}{\partial x_2} \Big )^2 dx=\frac{1}{\epsilon}\int_{T_{\epsilon}}|\nabla u_k^{\epsilon}|^2 dy\leq C
\end{equation*}
where we did the change of coordinates $y_1=x_1$, $y_2=\epsilon x_2$ and the last inequality is true because of \eqref{stekbound1}. We want now to bound $||v_k^{\epsilon}||_{L^2(T_1)}$. By the Poincar\'e-Friedrichs
inequality we get \begin{equation*}
\int_{T_1} (v^{\epsilon}_k)^2dx\leq C_{T_1} \big [ \int_{T_1} |\nabla v^{\epsilon}_k|^2dx+\int_{\Gamma_1^+\cup \Gamma_1^- } (v^{\epsilon}_k)^2ds \big ].
\end{equation*} 
Now $||\nabla v_k^{\epsilon}||_{L^2(T_1)}$ is bounded, so it remains to bound the second term in the r.h.s. of the inequality. Since $\sqrt{1+\rho'^2}$ is a bounded function, for $\epsilon$ small enough we obtain
\begin{align*}
\int_{\Gamma_1^+}(v^{\epsilon}_k)^2ds&=\int_{-\frac{L}{2}}^{\frac{L}{2}}(v_k^{\epsilon}(x_1,\rho(x_1))^2\sqrt{1+\rho'^2}dx_1\\
&\leq C\int_{-\frac{L}{2}}^{\frac{L}{2}}(u_k^{\epsilon}(x_1,\epsilon \rho(x_1))^2\sqrt{1+\epsilon^2\rho'^2}dx_1\\
&\leq C\int_{\Gamma_{\epsilon}^+}(u^{\epsilon}_k)^2ds\leq C
\end{align*}
where the last inequality is true becuase $||u_k^{\epsilon}||_{L^2(\partial\Omega_{\epsilon})}=1$. The same computation is true for the integral over $\Gamma_1^-$.

We conclude that there exists $\overline{V}_k\in H^1(T_1)$ such that (up to a sub-sequence that we still denote by $v_k^{\epsilon}$) 
\begin{equation*}
v_k^{\epsilon} \rightharpoonup \overline{V}_k \quad \text{in} \quad H^1(T_1).
\end{equation*}
We finish the proof by showing that $\overline{V}_k$ does not depend on $x_2$. Indeed 
\begin{equation*}
\int_{T_1} \big ( \frac{\partial v_k^{\epsilon} }{\partial x_2} \big )^2dx=\epsilon \int_{T_{\epsilon}} \big ( \frac{\partial u_k^{\epsilon} }{\partial x_2} \big )^2dx\leq C \epsilon^2 \rightarrow 0.
\end{equation*} 
\end{proof} 
\subsection{Limit eigenvalue problem}\label{sec3.3}
From \eqref{stekin1} we know that there exists $0\leq \beta \leq \mu_k$ such that
\begin{equation*}
\frac{\sigma_k^{\epsilon}}{\epsilon}\rightarrow \beta.
\end{equation*}
First,  we  prove that there exists $j\in \mathbb{N}$ such that $0\leq j\leq k$ and $\beta=\mu_j$ and, in a second step, we prove that $j=k$.

\noindent{\bf Step 1.} We begin with the following. 
\begin{lemma}\label{lep1} There exists $j\in \mathbb{N}$  such that $0\leq j\leq k$ and
\begin{equation*}
\frac{\sigma_k^{\epsilon}}{\epsilon}\rightarrow \mu_j,
\end{equation*}
where $\mu_j$ is the $j-$th eigenvalue of the problem \eqref{ep1}.
\end{lemma}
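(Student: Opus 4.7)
My plan is to pass to the limit as $\epsilon \to 0$ in the weak formulation of the Steklov problem for $u_k^\epsilon$, using a family of test functions that are constant on $D_1 \cup D_2$ and depend only on $x_1$ inside the tube. This will identify the limit $\beta$ as an eigenvalue of problem \eqref{ep1}, hence $\beta = \mu_j$ for some $j$, and the upper bound already proved forces $j\le k$.

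Explicitly, given any $\varphi \in H^1(-\frac{L}{2},\frac{L}{2})$ (or smooth enough), define $v_\varphi \in H^1(\Omega_\epsilon)$ by setting $v_\varphi \equiv \varphi(-\frac{L}{2})$ on $D_1$, $v_\varphi(x_1,x')=\varphi(x_1)$ on $T_\epsilon$, and $v_\varphi \equiv \varphi(\frac{L}{2})$ on $D_2$. Since $\nabla v_\varphi = 0$ on $D_1\cup D_2$, the Steklov weak equation for $u_k^\epsilon$ reduces to
\begin{equation*}
\int_{T_\epsilon} \frac{\partial u_k^\epsilon}{\partial x_1}\varphi'(x_1)\,dx \;=\; \sigma_k^\epsilon \int_{\partial\Omega_\epsilon} u_k^\epsilon\, v_\varphi \, ds.
\end{equation*}
Changing variables via $\Phi_\epsilon$ on the left gives $\epsilon\int_{T_1}\frac{\partial v_k^\epsilon}{\partial x_1}\varphi'\,dx$; dividing the whole identity by $\epsilon$, using $\sigma_k^\epsilon/\epsilon \to \beta$, and invoking Lemma \ref{lemc2} (weak $H^1(T_1)$-convergence with a limit independent of $x_2$) sends the left-hand side to $\int_{T_1} V_k'\varphi'\,dx = 2\int_{-L/2}^{L/2}\rho\, V_k'\varphi'\,dx_1$.

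For the right-hand side, I split $\int_{\partial\Omega_\epsilon}u_k^\epsilon v_\varphi\,ds$ into three pieces: the part on $\partial D_i\cap\partial\Omega_\epsilon$ (which tends to $c_{i,k}\varphi(\pm\frac{L}{2})P(D_i)$ by $L^2$-trace convergence from the $H^1(D_i)$ convergence of Lemma \ref{lemc1}), and the lateral pieces $\Gamma_\epsilon^\pm$ which, after rescaling, tend to $\int_{-L/2}^{L/2} V_k\varphi\,dx_1$ each. The essential matching condition $V_k(-\frac{L}{2})=c_{1,k}$ and $V_k(\frac{L}{2})=c_{2,k}$ comes from continuity of $u_k^\epsilon$ across the junction disk: the trace from $D_1$ on the shrinking disk converges to $c_{1,k}$, while in rescaled coordinates the same trace tends to the boundary value $V_k(-\frac{L}{2})$. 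Combining everything and dividing by $2$ yields exactly the weak formulation of \eqref{ep1} with eigenvalue $\beta$:
\begin{equation*}
\int_{-L/2}^{L/2}\rho V_k'\varphi'\,dx_1 \;=\; \beta\Bigl[\tfrac{P(D_1)}{2}V_k(-\tfrac{L}{2})\varphi(-\tfrac{L}{2})+\tfrac{P(D_2)}{2}V_k(\tfrac{L}{2})\varphi(\tfrac{L}{2})+\int_{-L/2}^{L/2}V_k\varphi\,dx_1\Bigr].
\end{equation*}

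It remains to show $V_k\not\equiv 0$, which I obtain by passing to the limit in $\|u_k^\epsilon\|_{L^2(\partial\Omega_\epsilon)}^2=1$: the same decomposition into the three boundary contributions yields $P(D_1)V_k(-\frac{L}{2})^2+P(D_2)V_k(\frac{L}{2})^2+2\int_{-L/2}^{L/2}V_k^2\,dx_1=1$, precluding the trivial solution. Hence $\beta$ is an eigenvalue of \eqref{ep1}, so $\beta=\mu_j$ for some $j\ge 0$, and the bound $\beta\le\mu_k$ from the previous upper-bound lemma forces $j\le k$. The main obstacle is the matching condition at the endpoints: justifying rigorously that the $H^1(D_i)$-trace on the degenerating junction disk and the $H^1(T_1)$-trace at $\pm\frac{L}{2}$ have the same limit requires a careful argument exploiting the continuity of $u_k^\epsilon$ and the locally uniform convergence on $D_i$ (the remaining limit-passage steps are standard weak-convergence manipulations once this is secured).
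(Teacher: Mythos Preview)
Your approach is essentially the same as the paper's: pass to the limit in the weak Steklov equation against test functions that are constant on each $D_i$ and depend only on $x_1$ in the tube, identify the limit as the weak form of \eqref{ep1}, and use the normalization to rule out $V_k\equiv 0$. The paper does this in two stages (first $\phi\in C_c^\infty(-\tfrac{L}{2},\tfrac{L}{2})$ to get the interior equation, then general $\psi$ to extract the boundary conditions), but your single-step version is equivalent.

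The one point you flag as ``the main obstacle''---the matching $V_k(\pm\tfrac{L}{2})=c_{i,k}$---is exactly where the paper supplies an argument you should adopt. Your sketch (comparing the $D_i$-trace on the shrinking junction segment with the $T_1$-trace at $x_1=\pm\tfrac{L}{2}$) does not quite close: the junction segment lies on $\partial D_i$, while the locally uniform convergence of Lemma~\ref{lemc1} holds only on compact subsets of the \emph{interior} of $D_i$, and $L^2(\partial D_i)$-trace convergence on a set of vanishing length gives no pointwise information. The paper's fix is to introduce an \emph{extended tube} $E_\epsilon\supset T_\epsilon$ reaching a fixed distance $\xi>0$ into each $D_i$; the estimates of Lemma~\ref{lemc2} carry over verbatim to give $v_k^\epsilon\rightharpoonup \overline V_k$ in $H^1(E_1)$ with $\overline V_k$ depending only on $x_1$. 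Now the point $(-\tfrac{L}{2}-\delta,0)$ for $0<\delta<\xi$ is an \emph{interior} point of $D_1$, so locally uniform convergence gives $V_k(-\tfrac{L}{2}-\delta)=c_{1,k}$, and continuity of $V_k\in H^1(-\tfrac{L}{2}-\xi,\tfrac{L}{2}+\xi)\hookrightarrow C^0$ yields $V_k(-\tfrac{L}{2})=c_{1,k}$. With this in hand your non-triviality argument and the rest of the proof go through unchanged.
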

\begin{proof}
We define $\beta$ in such a way that
\begin{equation*}
\frac{\sigma_k^{\epsilon}}{\epsilon}\rightarrow \beta,
\end{equation*}
from \eqref{stekin1} we know that $0\leq \beta\leq \mu_k$. We use the variational formulation of the Steklov problem with the following test function $\phi\in C_c^{\infty}(-\frac{L}{2},\frac{L}{2})$ (we constantly extend $\phi$ in the last variable $x_2$), we obtain:
\begin{equation*}
\int_{T_{\epsilon}} \frac{\partial u_k^{\epsilon} }{\partial x_1} \frac{\partial \phi }{\partial x_1} dx=\sigma_k^{\epsilon} \int_{\Gamma_{\epsilon}^+\cup \Gamma_{\epsilon}^-}  u_k^{\epsilon} \phi ds.
\end{equation*}
Now we make the  change of variable $y_1=x_1$ and $y_2=\epsilon x_2$ and we write the integral in the right hand side by the integral in the graph of $\epsilon \rho$
\begin{align*}
\int_{T_1} \frac{\partial v_k^{\epsilon}}{\partial y_1} \frac{\partial \phi }{\partial y_1} dy&=\frac{\sigma_k^{\epsilon}}{\epsilon}\big ( \int_{-\frac{L}{2}}^{\frac{L}{2}}u_k^{\epsilon}(x_1,\epsilon\rho(x_1)) \phi\sqrt{1+\epsilon^2\rho'^2}dx_1+
\int_{-\frac{L}{2}}^{\frac{L}{2}}u_k^{\epsilon}(x_1,-\epsilon\rho(x_1)) \phi\sqrt{1+\epsilon^2\rho'^2}dx_1 \big )\\
&=\frac{\sigma_k^{\epsilon}}{\epsilon}\big ( \int_{-\frac{L}{2}}^{\frac{L}{2}}v_k^{\epsilon}(x_1,\rho(x_1)) \phi\sqrt{1+\epsilon^2\rho'^2}dx_1+ \int_{-\frac{L}{2}}^{\frac{L}{2}}v_k^{\epsilon}(x_1,-\rho(x_1)) \phi\sqrt{1+\epsilon^2\rho'^2}dx_1 \big )
\end{align*}
We know that $v_k^{\epsilon} \rightharpoonup \overline{V}_k$  in $H^1(T_1)$ and $\overline{V}_k$ depends only on the variable $x_1$, we introduce the function $V_k$ that is the restriction of $\overline{V}_k$ to the variable $x_1$. We let $\epsilon$ goes to $0$ and  we obtain
\begin{equation*}
\int_{T_1} \frac{\partial V_k }{\partial y_1} \frac{\partial \phi }{\partial y_1} dy=2\beta \int_{-\frac{L}{2}}^{\frac{L}{2}} V_k \phi dx_1.
\end{equation*}
Integrating by parts in the left hand side, we finally obtain
\begin{equation*}
-2\int_{-\frac{L}{2}}^{\frac{L}{2}} \frac{d}{dx_1} \big (  \rho \frac{d}{dx_1}  V_k \big ) \phi dx_1=2\beta \int_{-\frac{L}{2}}^{\frac{L}{2}} V_k \phi dx_1.
\end{equation*}
This relation is true for every test function $\phi\in C_c^{\infty}(-\frac{L}{2},\frac{L}{2})$ so we have that $V_k$ and $\beta$ must have to satisfy the following differential equation
\begin{equation}\label{ep11}
-\frac{d}{dx}\big(\rho(x)\frac{d V_k}{dx}(x)\big)=\beta V_k(x)  \qquad  x\in \big(-\frac{L}{2},\frac{L}{2} \big).
\end{equation}

We  find now the boundary conditions associated to this equation. Fix a real number $\xi>0$ and  define the extended function $\overline{\rho}$ in the following way 
\begin{equation*}
\overline{\rho}=\begin{cases}
\vspace{0.2cm}
  \rho(-\frac{L}{2}) \qquad \text{if}\,\, -\frac{L}{2}-\xi\leq x_1\leq -\frac{L}{2} \\
   \vspace{0.2cm}
  \rho(x_1) \qquad \,\,\, \text{if}\,\, -\frac{L}{2}\leq x_1\leq \frac{L}{2} \\
   \rho(\frac{L}{2}) \qquad \text{if}\,\, \frac{L}{2}\leq x_1\leq \frac{L}{2}+\xi.
\end{cases}
\end{equation*}
We define the extended tube $E_{\epsilon}$
\begin{equation}
E_{\epsilon}=\big \{x=(x_1,x_2)\in \mathbb{R}^2 | -\frac{L}{2}-\xi\leq x_1\leq \frac{L}{2}+\xi, |x_2|< \epsilon\overline{\rho}(x_1)\big \},
\end{equation}
and we choose $\xi$ in such a way that $E_{\epsilon}\subset \Omega_{\epsilon}$. Now, repeating all the arguments in Lemma \ref{lemc2}, we obtain that  
\begin{equation*}
v_k^{\epsilon} \rightharpoonup \overline{V}_k \quad \text{in} \quad H^1(E_1)
\end{equation*}
and $\overline{V}_k$ depends only on $x_1$. We also know from Lemma \ref{lemc1} that $u_k^{\epsilon}$ locally uniformly converge to $c_{1,k}$ in $D_1$. From this fact we have that 
\begin{equation*}
v_k^{\epsilon}(-\frac{L}{2}-\delta,0)\rightarrow c_{1,k}=V_k(-\frac{L}{2}-\delta)\quad \forall \, \xi\geq \delta>0,
\end{equation*}
where $V_k$ is the restriction of $\overline{V}_k$ to the variable $x_1$.
We know that $\overline{V}_k\in H^1(E_1)$, from embedding theorem $V_k$ is continuous so we finally obtain 
\begin{equation}\label{cbn2}
V_k(-\frac{L}{2})=c_{1,k}=\lim_{\delta\rightarrow 0} V_k(-\frac{L}{2}-\delta).
\end{equation} 
Similarly,
\begin{equation*}
V_k(\frac{L}{2})=c_{2,k}.
\end{equation*} 

We use the variational formulation of the Steklov eigenvalue with a test function $\psi$ defined on all $\Omega_{\epsilon}$ and that depends only on $x_1$,
\begin{equation*}
\int_{\Omega_{\epsilon}} \frac{\partial u_k^{\epsilon} }{\partial x_1} \frac{\partial \psi }{\partial x_1} dx=\sigma_k^{\epsilon} \int_{\partial \Omega_{\epsilon}}  u_k^{\epsilon} \psi ds.
\end{equation*}

We repeat all the computations above and letting $\epsilon$ goes to $0$, we obtain 
\begin{equation*}
2\int_{-\frac{L}{2}}^{\frac{L}{2}}\rho \frac{d V_k }{dx_1} \frac{d \psi }{d x_1} dx_1=\beta\Big (  V_k \big (-\frac{L}{2}\big )\int_{\partial D_1}\psi ds +   V_k \big ( \frac{L}{2}\big )\int_{\partial D_2} \psi ds+ 2\int_{-\frac{L}{2}}^{\frac{L}{2}} V_k \psi dx_1 \Big ).
\end{equation*}

Integrating by parts the left hand side and recalling the equation \eqref{ep11} we finally get
\begin{equation*}
\rho(\frac{L}{2})\frac{dV_k}{dx}(\frac{L}{2})\psi(\frac{L}{2})-\rho(-\frac{L}{2})\frac{dV_k}{dx}(-\frac{L}{2})\psi(-\frac{L}{2})=\beta\big (  V_k \big (-\frac{L}{2}\big )\int_{\partial D_1}\psi ds +   V_k \big ( \frac{L}{2}\big )\int_{\partial D_2} \psi ds \big ).
\end{equation*}
Choosing a test function such that $\psi=1$ in $D_1$ and $\psi=0$ in $D_2$, we get the first boundary condition
\begin{equation*}
 \rho(-\frac{L}{2})\frac{dV_k}{dx}(-\frac{L}{2})=-\frac{\beta}{2}P(D_1)V_k(-\frac{L}{2}) 
\end{equation*} 
 and, similarly, chosing a test function such that $\psi=0$ in $D_1$ and $\psi=1$ in $D_2$ we get the second boundary condition 
\begin{equation*}
 \rho(\frac{L}{2})\frac{dV_k}{dx}(\frac{L}{2})=\frac{\beta}{2}P(D_2)V_k(\frac{L}{2}).
\end{equation*}

We finally obtain the following eigenvalue problem for $\beta$
\begin{equation}\label{ST1}
\begin{cases}
\vspace{0.2cm}
   -\frac{d}{dx}\big(\rho(x)\frac{d V_k}{dx}(x)\big)=\beta V_k(x)  \qquad  x\in \big(-\frac{L}{2},\frac{L}{2} \big) \\
\vspace{0.2cm}
      \rho(-\frac{L}{2})\frac{dV_k}{dx}(-\frac{L}{2})=-\frac{\beta}{2}P(D_1)V_k(-\frac{L}{2}) \\
      \rho(\frac{L}{2})\frac{dV_k}{dx}(\frac{L}{2})=\frac{\beta}{2}P(D_2)V_k(\frac{L}{2}).
\end{cases}
\end{equation}

To be able to conclude, it remains to prove that $V_k$ is not the zero function. If $V_k$ would be zero,
from the normalization $\int_{\partial\Omega_\epsilon} {u _k^\epsilon}^2=1$ and the convergence on the extended tube, we would have
$$1=P(D_1) c_{1,k}^2 + P(D_2) c_{2,k}^2 + 2\int_{L/2}^{L/2} V_k^2.$$
Therefore, $c_{1,k}$ or $c_{2,k}$ would not be zero yielding a contradiction since the function
$V_k$ being in $H^1(-\frac{L}{2}-\delta,\frac{L}{2}+\delta)$ is continuous and thus cannot be constant (different from zero) on
$(-\frac{L}{2}-\delta, -\frac{L}{2})$ and zero after $-\frac{L}{2}$.
Therefore we have proved that there exist $j\in\mathbb{N}$ such that $0\leq j\leq k$ and $\beta=\mu_j$, where $\mu_j$ is the $j-$th eigenvalue of the problem \eqref{ep1}. 
\end{proof}

\medskip
\noindent{\bf Step 2.} We have just proved that $\sigma_k^{\epsilon}\sim \mu_j \epsilon$, with $0\leq j\leq k$. In this step we justify that $j=k$ .
We  denote by $\widetilde{V}_k$ the function constructed by taking the  $k-$th eigenfunction of  problem \eqref{ep1} and extending it constantly in the $x_2$ variable and equally constant to $V_k(-\frac{L}{2})$ in $D_1$ and to $V_k(\frac{L}{2})$ in $D_2$.

We proceed by induction on the eigenvalue rank $k$. The case $k=0$ is obvious. Now suppose that for all $0\leq j\leq k-1$ we have that $v_j^{\epsilon} \rightharpoonup \overline{V}_j$ in $H^1(T_1)$ and $\sigma_j^{\epsilon}\sim \mu_j \epsilon$.

Now we will prove that $ \mu_k\epsilon+o(\epsilon)\leq \sigma_k^{\epsilon}$. By contradiction we suppose that there exists  $j\in\mathbb{N}$ such that $0\leq j\leq k-1$, $v_k^{\epsilon} \rightharpoonup \overline{V}_j$ in $H^1(T_1)$ and $\sigma_k^{\epsilon}\sim \mu_j \epsilon$. From the orthogonality of the Steklov eigenfunctions we have the following equality 
\begin{equation*}
0=\lim_{\epsilon\rightarrow 0} \int_{\partial \Omega_{\epsilon}}u_k^{\epsilon}  \widetilde{V}_j ds+\int_{\partial \Omega_{\epsilon}}u_k^{\epsilon}(u_j^{\epsilon}- \widetilde{V}_j)ds.
\end{equation*}

For the first term, from \eqref{denn1},  we have that $\lim_{\epsilon\rightarrow 0} \int_{\partial \Omega_{\epsilon}}u_k^{\epsilon}\widetilde{V}_j=2$. For the second term, we recall the inductive hypothesis $v_j^{\epsilon} \rightharpoonup \overline{V}_j$, using the same argument in the proof of Lemma \ref{lep1} we conclude also the equality \eqref{cbn2} and, using Cauchy-Schwartz inequality, we obtain 
\begin{equation*}
|\lim_{\epsilon\rightarrow 0}\int_{\partial \Omega_{\epsilon}}u_k^{\epsilon}(u_j^{\epsilon}- \overline{V}_j)ds|\leq \lim_{\epsilon\rightarrow 0} ||u_k^{\epsilon}||_{L^2(\partial \Omega_{\epsilon})}||u_j^{\epsilon}-\overline{V}_j||_{L^2(\partial \Omega_{\epsilon})}=0.
\end{equation*} 
This is a contraddiction, we conclude that $ \mu_k\epsilon+o(\epsilon)\leq \sigma_k^{\epsilon}$. Now recalling that $\sigma_k^{\epsilon}\leq \mu_k\epsilon+o(\epsilon)$ (see inequality \eqref{stekin1}) we can conclude that 
\begin{equation*}
\sigma_k^{\epsilon}\sim \mu_k \epsilon+o(\epsilon).
\end{equation*}
We also conclude that
\begin{equation*}
u_k^{\epsilon}(x_1,\epsilon x_2) \rightharpoonup \overline{V}_k(x_1,x_2) \quad \text{in} \quad H^1(T_1),
\end{equation*}
where $\overline{V}_k$ is the  $k-$th eigenfunction of the problem \eqref{ep1} constantly extended to $x_2$.  We end the proof by proving that the convergence is true not only up to a subsequence but is true for all the sequence. We have seen that the only possible accumulation point is $\overline{V}_k$
eigenfunction of Problem \eqref{ST1}. 
Now it is a classical result for Sturm-Liouville type problem that any eigenfunction
is simple: use the ODE to prove that the Wronskian is constant and the boundary conditions
to prove that it is zero, yielding the result. 


From the uniqueness of the accumulation point we conclude that the convergence holds for the whole
sequence. This concludes the proof of Theorem \ref{th1}

\section{The case $n\geq 3$ and $k\geq2$. Proof of Theorem \ref{th2}.}
In this section we will prove the second part of Theorem \ref{th2}. We will use the following notation, take $x\in \mathbb{R}^n$ then we write $x=(x_1,x')$ where $x_1\in \mathbb{R}$ and $x'\in \mathbb{R}^{n-1} $
\subsection{Upper bound for the Steklov eigenvalue}
In this section we prove un upper bound for all the Steklov eigenvalues. In the following lemma we give an estimate from above of the speed of convergence to zero of the Steklov eigenvalues. 
\begin{lemma}\label{lemb1}
Let $n\geq 3$ and let $\Omega_{\epsilon}\subset \mathbb{R}^n$ be a dumbbell shape domain then
\begin{enumerate}
\item[$\bullet$] for the first Steklov eigenvalue
\begin{equation}\label{ubs1}
\sigma_1^{\epsilon}\leq \sigma_1\epsilon^{n-1}+o(\epsilon^{n-1})
\end{equation}
where $\sigma_1>0$ is the unique positive number such that the following differential equation has a non-trivial solution:
\begin{equation*}
\begin{cases}
\vspace{0.2cm}
   -w_{n-1}\frac{d}{dx}\big(\rho^{n-1}(x)\frac{d V_1}{dx}(x)\big)=0  \qquad  x\in \big(-\frac{L}{2},\frac{L}{2} \big) \\
   \vspace{0.2cm}
   \rho^{n-1}(-\frac{L}{2})\frac{dV_1}{dx}(-\frac{L}{2})=-\frac{\sigma_1}{\omega_{n-1}}P(D_1)V_1(-\frac{L}{2}) \\
   \rho^{n-1}(\frac{L}{2})\frac{dV_1}{dx}(\frac{L}{2})=\frac{\sigma_1}{\omega_{n-1}}P(D_2)V_1(\frac{L}{2}).
\end{cases}
\end{equation*}
\item[$\bullet$] For all the other Steklov eigenvalues ($k\geq 2$)
\begin{equation}\label{ubs2}
\sigma_k^{\epsilon}\leq \lambda^{\epsilon}_k \epsilon+o(\epsilon) 
\end{equation}
where $\lambda^{\epsilon}_k$ is defined by the following $1-$dimensional eigenvalue problem:
\begin{equation}\label{ep4}
\begin{cases}
\vspace{0.2cm}
   -\omega_{n-1}\frac{d}{dx}\big(\rho^{n-1}(x)\frac{d V_k}{dx}(x)\big)=\lambda^{\epsilon}_k\omega_{n-2}\rho^{n-2}(x)V_k(x)  \qquad  x\in \big(-\frac{L}{2},\frac{L}{2} \big) \\
   \vspace{0.2cm}
  \rho^{n-1}(-\frac{L}{2})\frac{dV_k}{dx}(-\frac{L}{2})=-\frac{\lambda^{\epsilon}_k}{\epsilon^{n-2}}P(D_1)V_k(-\frac{L}{2})\\
  \rho^{n-1}(\frac{L}{2})\frac{dV_k}{dx}(\frac{L}{2})=\frac{\lambda^{\epsilon}_k}{\epsilon^{n-2}}P(D_2)V_k(\frac{L}{2}).
\end{cases}
\end{equation}
\end{enumerate}
\end{lemma}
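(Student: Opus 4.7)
I would adapt the variational strategy of Section 3.1 (the $n=2$ case): build explicit $k$-dimensional test subspaces out of the one-dimensional eigenfunctions of \eqref{ep2} and \eqref{ep4} and insert them into the Rayleigh--Ritz characterization of $\sigma_k^\epsilon$. The two bullets of the lemma follow the same recipe, applied to the two different 1-D problems.

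\textbf{First bullet ($\sigma_1^\epsilon$).} Let $V_1\in H^1(-L/2,L/2)$ be a nontrivial solution of \eqref{ep2}, and extend it to $\Omega_\epsilon$ by
$\Phi_1\equiv V_1(-L/2)$ on $D_1$, $\Phi_1(x_1,x')=V_1(x_1)$ on $T_\epsilon$, $\Phi_1\equiv V_1(L/2)$ on $D_2$,
which is continuous across the two interfaces and hence lies in $H^1(\Omega_\epsilon)$. Subtracting the mean $m_1^\epsilon$ on $\partial\Omega_\epsilon$ yields $\Psi_1$ orthogonal to constants. Fubini on the tube (cross-section an $(n-1)$-ball of radius $\epsilon\rho(x_1)$) gives
$$\int_{\Omega_\epsilon}|\nabla\Psi_1|^2\,dx=\omega_{n-1}\epsilon^{n-1}\int_{-L/2}^{L/2}\rho^{n-1}(V_1')^2\,dx_1,$$
while the boundary integral equals $P(D_1)V_1(-L/2)^2+P(D_2)V_1(L/2)^2$ plus a lateral tube contribution of order $\epsilon^{n-2}$ and a mean correction $-(m_1^\epsilon)^2P(\Omega_\epsilon)$. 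Integrating the ODE of \eqref{ep2} once and using the two boundary conditions gives $P(D_1)V_1(-L/2)+P(D_2)V_1(L/2)=0$, so $m_1^\epsilon=O(\epsilon^{n-2})$ and the mean correction is negligible. Multiplying \eqref{ep2} by $V_1$ and integrating by parts then yields
$$\omega_{n-1}\int\rho^{n-1}(V_1')^2\,dx_1=\sigma_1\bigl[P(D_1)V_1(-L/2)^2+P(D_2)V_1(L/2)^2\bigr],$$
so the Rayleigh quotient equals $\sigma_1\epsilon^{n-1}+o(\epsilon^{n-1})$, which is \eqref{ubs1}.

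\textbf{Second bullet ($\sigma_k^\epsilon$, $k\ge 2$).} Let $\{V_j^\epsilon\}_j$ be a basis of eigenfunctions of \eqref{ep4} normalized so that the natural weighted inner product (in analogy with \eqref{denn1}) is diagonal. Extend each $V_j^\epsilon$ to $\Phi_j^\epsilon$ on $\Omega_\epsilon$ as in the previous step and set $\Psi_j^\epsilon=\Phi_j^\epsilon-m_j^\epsilon$; the subspace $E_k=\mathrm{span}\{\Psi_1^\epsilon,\ldots,\Psi_k^\epsilon\}$ is $k$-dimensional and orthogonal to constants. For each $j$ the same Fubini computation yields a Rayleigh numerator $\omega_{n-1}\epsilon^{n-1}\int\rho^{n-1}(V_j^\epsilon)'^2\,dx_1$, and a denominator which is the sum of endpoint contributions $P(D_i)V_j^\epsilon(\pm L/2)^2$ and a lateral-surface term of order $\epsilon^{n-2}\int\rho^{n-2}(V_j^\epsilon)^2\,dx_1$. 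Testing the ODE of \eqref{ep4} with $V_j^\epsilon$ and using the Robin-type boundary conditions---whose $1/\epsilon^{n-2}$ factor is precisely chosen to make the two parts of the denominator combine consistently---shows that this Rayleigh quotient equals $\lambda_j^\epsilon\epsilon$ up to lower-order terms, so the supremum over $E_k$ is $\lambda_k^\epsilon\epsilon+o(\epsilon)$, which is \eqref{ubs2}.

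\textbf{Main obstacle.} The delicate point in the second bullet is the control of the off-diagonal denominator terms $\int_{\partial\Omega_\epsilon}\Psi_i^\epsilon\Psi_j^\epsilon\,ds$ for $i\ne j$ together with the mean corrections $(m_j^\epsilon)^2P(\Omega_\epsilon)$: they must be asymptotically negligible compared to the diagonal, so that the supremum of the Rayleigh quotient on $E_k$ is genuinely driven by the largest $\lambda_j^\epsilon\epsilon$, namely $\lambda_k^\epsilon\epsilon$. This is handled exactly as in \eqref{denn1}--\eqref{den5} of the $n=2$ case by exploiting the weighted $L^2$ orthogonality of the $V_j^\epsilon$'s, but here one must additionally keep track of the two different scales $\epsilon^{n-2}$ (lateral tube) and $\epsilon^{2(n-2)}$ (endpoint values, since $V_j^\epsilon(\pm L/2)=O(\epsilon^{n-2})$ by the Robin boundary conditions). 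A last verification is that $\lambda_k^\epsilon$ stays bounded as $\epsilon\to 0$ (it converges in fact to an eigenvalue of the Dirichlet problem \eqref{ep3}), so that the leading $\lambda_k^\epsilon\epsilon$ term really dominates the $o(\epsilon)$ corrections.
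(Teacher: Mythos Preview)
Your approach is correct and is essentially the paper's: both use the variational characterization with test functions that are constant on $D_1$, $D_2$ and depend only on $x_1$ in the tube. The paper streamlines the argument by introducing the subspace
\[
H^1_{co}(\Omega_\epsilon)=\bigl\{u\in H^1(\Omega_\epsilon)\;:\;u\equiv c_i\text{ on }D_i,\ \textstyle\int_{\partial\Omega_\epsilon}u=0,\ u=u(x_1)\text{ on }T_\epsilon\bigr\}
\]
and observing that the Steklov Rayleigh quotient restricted to $H^1_{co}$ is, up to the factor $\sqrt{1+\epsilon^2\rho'^2}$, exactly $\epsilon$ times the Rayleigh quotient of \eqref{ep4} (and, dropping the tube term, $\epsilon^{n-1}$ times that of \eqref{ep2}); the min--max then gives \eqref{ubs1} and \eqref{ubs2} directly. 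This packaging sidesteps precisely the ``main obstacle'' you flag: since the whole restricted quadratic form is identified with the 1-D one, the orthogonality, mean-corrections, and boundedness of $\lambda_k^\epsilon$ never need to be tracked separately.
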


\begin{proof}
We introduce the following functional space
\begin{equation}
H_{co}^1(\Omega_{\epsilon})=\big \{u\in H^1(\Omega_{\epsilon}) | u\equiv c_i\; \text{in}\;  D_i \;
\int_{\partial\Omega_\epsilon} u =0,\; \text{and}\;u\; \text{depends only on}\; x_1\; \text{in}\;  T_{\epsilon} \big \}
\end{equation}
and denote $\sigma_k^{co}(\Omega_{\epsilon})$ the (pseudo) $k$-th Steklov eigenvalue computed by  replacing the Sobolev space $H^1(\Omega _\epsilon)$ with $H_{co}^1(\Omega_{\epsilon})$ in the 
variational formulation using the Rayleigh quotient.
Since $H_{co}^1(\Omega_{\epsilon})$ is a subspace of $H^1(\Omega_{\epsilon})$, we obtain: 
\begin{align*}
\sigma_1^{\epsilon}\leq \sigma_1^{co}(\Omega_{\epsilon})&=\inf_{0\neq u\in H_{co}^1(\Omega_{\epsilon})}\frac{\int_{\Omega_{\epsilon}}|\nabla u|^2dx}{\int_{\partial\Omega_{\epsilon}}u^2d\mathcal{H}^{n-1}}\\
&\leq \inf_{0\neq u\in H_{co}^1(\Omega_{\epsilon})}\frac{\epsilon^{n-1}w_{n-1}\int_{-\frac{L}{2}}^{\frac{L}{2}}\rho^{n-1}(u')^2dx_1}{P(D_1)u^2(-\frac{L}{2})+P(D_2)u^2(\frac{L}{2})}+o(\epsilon^{n-1})\\
&\leq\sigma_1 \epsilon^{n-1}+o(\epsilon^{n-1}).
\end{align*}
The last inequality is true because the quantity
\begin{equation*}
\inf_{0\neq u\in H_{co}^1(\Omega_{\epsilon})}\frac{w_{n-1}\int_{-\frac{L}{2}}^{\frac{L}{2}}\rho^{n-1}(u')^2dx_1}{P(D_1)u^2(-\frac{L}{2})+P(D_2)u^2(\frac{L}{2})}
\end{equation*}
is equal to $\sigma_1$ that is the unique positive number such that the following differential equation has a non-trivial solution:
\begin{equation*}
\begin{cases}
\vspace{0.2cm}
   -w_{n-1}\frac{d}{dx}\big(\rho^{n-1}(x)\frac{d V_1}{dx}(x)\big)=0  \qquad  x\in \big(-\frac{L}{2},\frac{L}{2} \big) \\
   \vspace{0.2cm}
   \rho^{n-1}(-\frac{L}{2})\frac{dV_1}{dx}(-\frac{L}{2})=-\frac{\sigma_1}{w_{n-1}}P(D_1)V_1(-\frac{L}{2}) \\
   \rho^{n-1}(\frac{L}{2})\frac{dV_1}{dx}(\frac{L}{2})=\frac{\sigma_1}{w_{n-1}}P(D_2)V_1(\frac{L}{2}).
\end{cases}
\end{equation*}
We now prove the second part of the lemma. 
We start by noticing that from the geometric properties of $\partial T_{\epsilon}^e$ we can compute the surface measure and we obtain that
\begin{equation}\label{surfmeas2}
d\mathcal{H}^{n-1}\llcorner \partial T_{\epsilon}^e =\epsilon^{n-2}\rho^{n-2}\sqrt{1+\epsilon^2\rho'^2}dx_1d\varphi_1...d\varphi_{n-2}
\end{equation}
Let $S_{k}$ be the family of all the $k-$dimensional subspaces of the functional space $H_{co}^1(\Omega_{\epsilon})$ with $k\geq 2$, as above we have the following inequalities 
$$
\sigma_k^{\epsilon}\leq \sigma_k^{co}(\Omega_{\epsilon})=\inf_{E\in S_{k+1}}\sup_{u\in E}\frac{\int_{\Omega_{\epsilon}}|\nabla u|^2dx}{\int_{\partial\Omega_{\epsilon}}u^2d\mathcal{H}^{n-1}}$$
$$\leq \inf_{E\in S_{k+1}}\sup_{u\in E}\frac{\epsilon^{n-1}w_{n-1}\int_{-\frac{L}{2}}^{\frac{L}{2}}\rho^{n-1}(u')^2dx_1}{P(D_1)u^2(-\frac{L}{2})+P(D_2)u^2(\frac{L}{2})+\epsilon^{n-2}w_{n-2}\int_{-\frac{L}{2}}^{\frac{L}{2}}u^2\rho^{n-2}\sqrt{1+\epsilon^2\rho'^2}dx_1}+o(\epsilon)$$
$$\leq \epsilon \inf_{E\in S_{k+1}}\sup_{u\in E}\frac{w_{n-1}\int_{-\frac{L}{2}}^{\frac{L}{2}}\rho^{n-1}(u')^2dx_1}{\frac{P(D_1)}{\epsilon^{n-2}}u^2(-\frac{L}{2})+\frac{P(D_2)}{\epsilon^{n-2}}u^2(\frac{L}{2})+w_{n-2}\int_{-\frac{L}{2}}^{\frac{L}{2}}u^2\rho^{n-2}dx_1}+o(\epsilon)$$
$$\leq \lambda^{\epsilon}_k\epsilon+o(\epsilon).$$
Where the last inequality is true because the quantity
\begin{equation*}
\frac{w_{n-1}\int_{-\frac{L}{2}}^{\frac{L}{2}}\rho^{n-1}(u')^2dx_1}{\frac{P(D_1)}{\epsilon^{n-2}}u^2(-\frac{L}{2})+\frac{P(D_2)}{\epsilon^{n-2}}u^2(\frac{L}{2})+w_{n-2}\int_{-\frac{L}{2}}^{\frac{L}{2}}u^2\rho^{n-2}dx_1}
\end{equation*}
is the Rayleigh quotient of the following eigenvalue problem that depends on $\epsilon$

\begin{equation}
\begin{cases}
\vspace{0.2cm}
   -w_{n-1}\frac{d}{dx}\big(\rho^{n-1}(x)\frac{d V_k}{dx}(x)\big)=\lambda^{\epsilon}_kw_{n-2}\rho^{n-2}(x)V_k(x)  \qquad  x\in \big(-\frac{L}{2},\frac{L}{2} \big) \\
   \vspace{0.2cm}
  \rho^{n-1}(-\frac{L}{2})\frac{dV_k}{dx}(-\frac{L}{2})=-\frac{\lambda^{\epsilon}_k}{\epsilon^{n-2}}P(D_1)V_k(-\frac{L}{2})\\
  \rho^{n-1}(\frac{L}{2})\frac{dV_k}{dx}(\frac{L}{2})=\frac{\lambda^{\epsilon}_k}{\epsilon^{n-2}}P(D_2)V_k(\frac{L}{2}).
\end{cases}
\end{equation}

\end{proof}

\subsection{Convergence of eigenfunctions}
We begin with the convergence on the two regions $D_i$ where $i=1,2$. The proof of the following lemma is the same of the proof of Lemma \ref{lemc1}, so we do not repeat it.
\begin{lemma}\label{lemcn1}
Let $k\geq 1$ we have (up to a sub-sequence that we still denote by $u_k^{\epsilon}$) 
\begin{align*}
u_k^{\epsilon} &\rightharpoonup c_{i ,k}\quad \text{in} \quad H^1(D_i),\\
u_k^{\epsilon} &\rightarrow c_{i,k} \quad \text{locally uniformly in} \quad D_i.
\end{align*} 
where $c_{i,k}$ are constants
\end{lemma}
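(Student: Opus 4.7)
The plan is to transpose verbatim the proof of Lemma \ref{lemc1}, since that argument was written in a way that does not make substantive use of the two-dimensionality of the ambient space. First, from the normalization $\|u_k^\epsilon\|_{L^2(\partial \Omega_\epsilon)}=1$ together with $\sigma_k^\epsilon \to 0$ (which follows from the upper bounds in Lemma \ref{lemb1}), I would deduce
$$\int_{\Omega_\epsilon}|\nabla u_k^\epsilon|^2\, dx = \sigma_k^\epsilon \longrightarrow 0,$$
so that $\|\nabla u_k^\epsilon\|_{L^2(D_i)}$ is bounded and in fact tends to zero. Next, the Poincar\'e-Friedrichs type inequality
$$\int_{D_i}(u_k^\epsilon)^2\, dx \leq \int_{\Omega_\epsilon}(u_k^\epsilon)^2\, dx \leq C_{\Omega_\epsilon}\left[\int_{\Omega_\epsilon}|\nabla u_k^\epsilon|^2\, dx + \int_{\partial \Omega_\epsilon}(u_k^\epsilon)^2\, ds\right]$$
reduces everything to controlling the constant $C_{\Omega_\epsilon}$ uniformly in $\epsilon$.

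The main (and essentially only) obstacle is verifying that $C_{\Omega_\epsilon}=1/\lambda_1(\Omega_\epsilon,1)$ stays bounded as $\epsilon\to 0$. The Bossel-Daners isoperimetric inequality for the first Robin eigenvalue holds in every dimension $n\geq 2$, hence $\lambda_1(\Omega_\epsilon,1)\geq \lambda_1(B_{R_\epsilon},1)$ where $B_{R_\epsilon}$ is the Euclidean ball with $|B_{R_\epsilon}|=|\Omega_\epsilon|$. For $\epsilon$ small enough the volume $|\Omega_\epsilon|$ lies in the compact interval $[|D_1|+|D_2|,\,|D_1|+|D_2|+1]$, so $R_\epsilon$ is bounded and bounded away from zero. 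Combining the scaling property $\lambda_1(B_{R},1)=R^{-2}\lambda_1(B_1,R)$ with the monotonicity of $R\mapsto \lambda_1(B_1,R)$ on balls (exactly as in Lemma \ref{lemc1}), one obtains a uniform positive lower bound on $\lambda_1(\Omega_\epsilon,1)$, and hence a uniform upper bound on $C_{\Omega_\epsilon}$.

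With this in hand, $\|u_k^\epsilon\|_{H^1(D_i)}\leq C$, so by weak compactness of the unit ball in $H^1(D_i)$ one can extract a subsequence $u_k^\epsilon\rightharpoonup u_k^0$ in $H^1(D_i)$. The fact that $\|\nabla u_k^\epsilon\|_{L^2(D_i)}\to 0$ forces $u_k^0$ to be a constant $c_{i,k}\in\mathbb{R}$. Finally, the upgrade to locally uniform convergence on compact $K\subset D_i$ is a purely local matter: each $u_k^\epsilon$ is harmonic in $\Omega_\epsilon$, so the mean value property over a ball $B_\delta(x)\subset D_i$ with $x\in K$, combined with the uniform $L^2(D_i)$ bound and Cauchy-Schwarz, gives a uniform pointwise bound on $K$; standard equicontinuity for uniformly bounded harmonic functions then converts weak convergence into locally uniform convergence to the constant $c_{i,k}$. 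All these steps are dimension-independent for $n\geq 3$, so no substantive modification of the two-dimensional proof is required.
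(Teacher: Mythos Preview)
Your proposal is correct and matches the paper's approach exactly: the paper explicitly states that the proof of Lemma~\ref{lemcn1} is the same as that of Lemma~\ref{lemc1} and does not repeat it, and your verification that each step (Bossel--Daners, scaling and monotonicity of Robin eigenvalues on balls, the mean-value argument for harmonic functions) is dimension-independent is precisely the justification the paper leaves implicit.
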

We study the behaviour of the eigenfunctions in the tube $T_{\epsilon}$. For every $k\geq 2$ we define the following rescaled functions
\begin{equation*}
v_k^{\epsilon}(x_1,x')=\epsilon^{\frac{n-2}{2}}u_k^{\epsilon}(x_1,\epsilon x')\quad \forall \, (x_1,x')\in T_1
\end{equation*}
\begin{lemma}\label{lemcn2}Let $n\geq 3$ and $k\geq 2$. There exists $\overline{V}_k\in H^1(T_1)$   which depends only on the variable $x_1$ such that
\begin{equation*}
v_k^{\epsilon} \rightharpoonup \overline{V}_k \quad \text{in} \quad H^1(T_1),
\end{equation*}
 up to a sub-sequence (that we still denote by $v_k^{\epsilon}$).
\end{lemma}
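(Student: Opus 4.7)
The strategy is to mirror the two-dimensional argument used in Lemma \ref{lemc2}, but with the scaling chosen precisely so that the $(n{-}1)$-dimensional boundary measure of $\partial T_\epsilon^e$ (which carries a factor $\epsilon^{n-2}$, cf.\ \eqref{surfmeas2}) becomes comparable to the flat measure on $\partial T_1^e$ after composition. First I would establish a uniform bound $\|v_k^\epsilon\|_{H^1(T_1)} \leq C$, then extract a weakly convergent subsequence, and finally prove that the limit is independent of $x'$ by showing $\|\nabla_{x'} v_k^\epsilon\|_{L^2(T_1)} \to 0$.

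For the gradient bound, I would compute directly from the definition $v_k^\epsilon(x_1,x') = \epsilon^{(n-2)/2} u_k^\epsilon(x_1,\epsilon x')$:
\begin{equation*}
\int_{T_1} |\nabla v_k^\epsilon|^2\,dx \;=\; \epsilon^{-1}\int_{T_\epsilon}|\partial_{x_1} u_k^\epsilon|^2\,dy \;+\; \epsilon\int_{T_\epsilon}|\nabla_{x'} u_k^\epsilon|^2\,dy \;\leq\; \epsilon^{-1}\sigma_k^\epsilon,
\end{equation*}
via the change of variables $y=(x_1,\epsilon x')$, where the last inequality uses the normalization $\|u_k^\epsilon\|_{L^2(\partial\Omega_\epsilon)}=1$. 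The upper bound \eqref{ubs2} from Lemma \ref{lemb1} then yields $\epsilon^{-1}\sigma_k^\epsilon \leq \lambda_k^\epsilon + o(1)$; a uniform bound on $\lambda_k^\epsilon$ follows from its variational characterization by plugging in any $\epsilon$-independent $(k{+}1)$-dimensional test subspace in \eqref{ep4}.

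For the $L^2$ bound I would invoke the Poincar\'e-Friedrichs inequality on $T_1$ with trace term on the lateral boundary:
\begin{equation*}
\|v_k^\epsilon\|_{L^2(T_1)}^2 \;\leq\; C_{T_1}\Bigl(\|\nabla v_k^\epsilon\|_{L^2(T_1)}^2 + \|v_k^\epsilon\|_{L^2(\partial T_1^e)}^2\Bigr).
\end{equation*}
The surface-measure identity \eqref{surfmeas2} shows
\begin{equation*}
\int_{\partial T_1^e} (v_k^\epsilon)^2 \, d\mathcal H^{n-1}
\;=\; \int_{\partial T_\epsilon^e} (u_k^\epsilon)^2\,\frac{\sqrt{1+\rho'^2}}{\sqrt{1+\epsilon^2\rho'^2}}\,d\mathcal H^{n-1}
\;\leq\; C\|u_k^\epsilon\|_{L^2(\partial\Omega_\epsilon)}^2 \;=\; C,
\end{equation*}
where the factor $\epsilon^{n-2}$ from $d\mathcal H^{n-1}\llcorner\partial T_\epsilon^e$ is exactly cancelled by $\epsilon^{n-2}$ produced by squaring $v_k^\epsilon$. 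This is the step that justifies the particular choice of the rescaling exponent and is, in my view, the main place where care is required. Together the two bounds give $\|v_k^\epsilon\|_{H^1(T_1)} \leq C$, and by reflexivity we extract a subsequence with $v_k^\epsilon \rightharpoonup \overline V_k$ in $H^1(T_1)$.

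Finally, to see that $\overline V_k$ does not depend on $x'$, I would observe that
\begin{equation*}
\int_{T_1}|\nabla_{x'} v_k^\epsilon|^2\,dx \;=\; \epsilon\int_{T_\epsilon}|\nabla_{x'} u_k^\epsilon|^2\,dy \;\leq\; \epsilon\,\sigma_k^\epsilon \;\leq\; C\epsilon^2 \;\longrightarrow\; 0,
\end{equation*}
so $\nabla_{x'}\overline V_k = 0$ by weak lower semicontinuity, forcing $\overline V_k$ to depend only on $x_1$. The most delicate ingredient throughout is the interplay between the factor $\epsilon^{(n-2)/2}$ in the rescaling and the $\epsilon^{n-2}$ weight in the surface measure on $\partial T_\epsilon^e$; everything else is a direct adaptation of the two-dimensional argument.
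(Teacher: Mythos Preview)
Your argument is correct, and for the gradient bound and the $x'$-independence it coincides with the paper's proof. The genuine difference is in how you obtain the uniform $L^2(T_1)$ bound. You apply a Poincar\'e--Friedrichs inequality on the \emph{fixed} domain $T_1$ with trace on the lateral boundary $\partial T_1^e$ and then observe, via \eqref{surfmeas2}, that $\int_{\partial T_1^e}(v_k^\epsilon)^2\,d\mathcal H^{n-1}$ coincides with $\int_{\partial T_\epsilon^e}(u_k^\epsilon)^2\,d\mathcal H^{n-1}$ up to the bounded factor $\sqrt{1+\rho'^2}/\sqrt{1+\epsilon^2\rho'^2}$. This is a straight transplant of the two-dimensional argument in Lemma~\ref{lemc2}. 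The paper instead slices $T_\epsilon$ by the cross-sections $B^{n-1}_{\epsilon\rho(x_1)}(x_1)$, applies on each slice the Robin inequality $\int_B w^2 \le \lambda_1(B,1)^{-1}\bigl(\int_B|\nabla w|^2+\int_{\partial B}w^2\bigr)$, and uses the asymptotics $\lambda_1(B^{n-1}_{\epsilon\rho},1)\sim (n-1)/(\epsilon\rho)$ to absorb the factor of $\epsilon$.

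Your route is more elementary---it needs only one fixed functional inequality on $T_1$ and the surface-measure change of variables---whereas the paper's slicing argument is a bit more involved but has the mild advantage of working directly at the $\epsilon$-scale, so it avoids invoking a trace inequality on $\partial T_1^e$ (and hence any implicit regularity of that boundary). Both approaches rely on the same normalisation $\|u_k^\epsilon\|_{L^2(\partial\Omega_\epsilon)}=1$ and the upper bound \eqref{ubs2} as the only analytic inputs.
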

\begin{proof} Below, by $C$ we denote a constant which may change from line to line. We start with the bound of $||\nabla v_k^{\epsilon}||_{L^2(T_1)}$ 
\begin{equation*}
\int_{T_1}|\nabla v_k^{\epsilon}|^2 dx\leq \int_{T_1} \Big( \frac{\partial v_k^{\epsilon}}{\partial x_1} \Big )^2+\frac{1}{\epsilon^2} |\nabla_{x'} v_k^{\epsilon} |^2 dx=\frac{1}{\epsilon}\int_{T_{\epsilon}}|\nabla u_k^{\epsilon}|^2 dy\leq C
\end{equation*}
where we performed the change of coordinates $y_1=x_1$, $y'=\epsilon x'$. The last inequality is true because of \eqref{ubs2}.  We want now to bound $||v_k^{\epsilon}||_{L^2(T_1)}$.  By Fubini Tonelli we have:
\begin{equation}\label{bv1}
\int_{T_1}(v_k^{\epsilon})^2dx=\frac{1}{\epsilon}\int_{-\frac{L}{2}}^{\frac{L}{2}}\int_{B_{\epsilon\rho(x_1)}^{n-1}(x_1)}(u_k^{\epsilon}(x_1,x'))^2d\mathcal{H}^{n-1}dx_1,
\end{equation}
where $B_{\epsilon\rho(x_1)}^{n-1}(x_1)$ is the $n-1$ dimensional ball centered in $x_1$ with radius $\epsilon\rho(x_1)$.
Using the characterization of Robin eigenvalues, we obtain for all $x_1\in(-\frac{L}{2},\frac{L}{2})$:
\begin{equation}\label{pf1}
\int_{B_{\epsilon\rho(x_1)}^{n-1}(x_1)}(u_k^{\epsilon})^2d\mathcal{H}^{n-1}\leq \frac{\int_{B_{\epsilon\rho(x_1)}^{n-1}(x_1)}|\nabla u_k^{\epsilon}|^2d\mathcal{H}^{n-1}+\int_{\partial B_{\epsilon\rho(x_1)}^{n-1}(x_1)}(u_k^{\epsilon})^2d\mathcal{H}^{n-2}}{\lambda_1(B_{\epsilon\rho(x_1)}^{n-1}(x_1),1)}
\end{equation}
where $\lambda_1(B_{\epsilon\rho(x_1)}^{n-1}(x_1),1)$ is the first Robin eigenvalue with parameter $1$ of the ball $B_{\epsilon\rho(x_1)}^{n-1}(x_1)$.

Now we recall that (see \cite{GS05})
\begin{equation*}
\lambda_1(B_{\epsilon},1)\sim  \frac{n}{\epsilon}
\end{equation*}
where $B_{\epsilon}$ is the $n-$dimensional ball of radius $\epsilon$. In particular we have  
\begin{equation*}
\lambda_1(B_{\epsilon\rho(x_1)}^{n-1}(x_1),1)\sim  \frac{n-1}{\epsilon \rho(x_1)}.
\end{equation*}
This asymptotic formulae together with  \eqref{pf1} and  \eqref{bv1} give
\begin{align*}
\int_{T_1}(v_k^{\epsilon})^2dx\leq C \big (\int_{T_{\epsilon}}(u_k^{\epsilon})^2dx+\int_{\partial T_{\epsilon}^e}(u_k^{\epsilon})^2ds \big )\leq C,
\end{align*}
where the last inequality is true because  $||\nabla u_k^{\epsilon}||_{L^2(D_1)}\leq C\epsilon$ and  $||u_k^{\epsilon}||_{L^2(\partial\Omega_{\epsilon})}=1$.

We conclude that there exists $\overline{V}_k\in H^1(T_1)$ such that (up to a sub-sequence that we still denote by $v_k^{\epsilon}$) 
\begin{equation*}
v_k^{\epsilon} \rightharpoonup \overline{V}_k \quad \text{in} \quad H^1(T_1).
\end{equation*}

We finish the proof by showing that $\overline{V}_k$ does not depend on $x_i$ for all $i\geq 2$, indeed 
\begin{equation*}
\int_{T_1} \big ( \frac{\partial v_k^{\epsilon} }{\partial x_i} \big )^2dx=\epsilon \int_{T_{\epsilon}} \big ( \frac{\partial u_k^{\epsilon} }{\partial x_i} \big )^2dx\leq C \epsilon^2 \rightarrow 0.
\end{equation*} 
\end{proof}
Let now $V_k$ be the restriction of $\overline{V}_k$ to the variable $x_1$,
the main goal now is to prove that $V_k$ is not the zero function and $V_k(-\frac{L}{2})=V_k(-\frac{L}{2})=0$. In order to reach this result we start by some consideration about the constants $c_{i,k}$ in Lemma \ref{lemcn1}.

We know that $\int_{\partial\Omega_{\epsilon}}(u^{\epsilon}_k)^2d\mathcal{H}^{n-1}=1$ for all $\epsilon$ and it is easy to see that, by change of variable, we have that $\int_{\partial T_{\epsilon}}(u^{\epsilon}_k)^2d\mathcal{H}^{n-1}=\int_{\partial T_1}(v^{\epsilon}_k)^2d\mathcal{H}^{n-1}$. Now by Lemmas \ref{lemcn1} and \ref{lemcn2} we obtain that for all $k\geq 2$,
\begin{equation*}
\int_{\partial \Omega_{\epsilon}}(u^{\epsilon}_k)^2d\mathcal{H}^{n-1}\rightarrow c_{1,k}^2P(D_1)+c_{2,k}^2P(D_2)+w_{n-2}\int_{-\frac{L}{2}}^{\frac{L}{2}}V_k^2(x_1)\rho^{n-2}(x_1)dx_1,
\end{equation*}
so if we prove that $c_{1,k}=c_{2,k}=0$, we conclude that  $V_k$ is not identically zero. We now prove that $c_{1,k}=c_{2,k}=0$ for $k\geq 2$.

We note that for all $k\geq 1$, by Cauchy-Schwarz inequality 
\begin{equation*}
|\int_{\partial T_{\epsilon}}u^{\epsilon}_kd\mathcal{H}^{n-1}|\leq P(T_{\epsilon})^{\frac{1}{2}}||u^{\epsilon}_k||_{L^2(\partial T_{\epsilon})}\rightarrow 0,
\end{equation*}
and we also know that  $\int_{\partial\Omega_{\epsilon}}u^{\epsilon}_k d\mathcal{H}^{n-1}=0$, from Lemmas \ref{lemcn1} and \ref{lemcn2} we obtain that for all $k\geq 1$
\begin{equation}\label{ec1}
 c_{1,k}P(D_1)+c_{2,k}P(D_2)=0.
\end{equation}
Like in the proof of Lemma \ref{lep1}, we introduce the extended tube $E_{\epsilon}$. We fix a real number $\xi>0$ and we define the extended function $\overline{\rho}$ in the following way 
\begin{equation*}
\overline{\rho}=\begin{cases}
\vspace{0.2cm}
  \rho(-\frac{L}{2}) \qquad \text{if}\,\, -\frac{L}{2}-\xi\leq x_1\leq -\frac{L}{2} \\
   \vspace{0.2cm}
  \rho(x_1) \qquad \,\,\, \text{if}\,\, -\frac{L}{2}\leq x_1\leq \frac{L}{2} \\
   \rho(\frac{L}{2}) \qquad \text{if}\,\, \frac{L}{2}\leq x_1\leq \frac{L}{2}+\xi.
\end{cases}
\end{equation*}
We define the extended tube $E_{\epsilon}$
\begin{equation}\label{extt}
E_{\epsilon}=\big \{x=(x_1,x')\in \mathbb{R}^n | -\frac{L}{2}-\xi\leq x_1\leq \frac{L}{2}+\xi, |x'|< \epsilon\overline{\rho}(x_1)\big \},
\end{equation}
and we choose $\xi$ in such a way that $E_{\epsilon}\subset \Omega_{\epsilon}$.
\begin{figure}
\centering
\tdplotsetmaincoords{0}{0}
\begin{tikzpicture}[tdplot_main_coords, scale=0.7]
\fill[left color=gray!50!black,right color=gray!50!black,middle color=gray!50,shading=axis,opacity=0.15] (-4.45,0,0) circle (2.47); 
\fill[left color=gray!50!black,right color=gray!50!black,middle color=gray!50,shading=axis,opacity=0.15] (5,0,0) circle (3);
\fill[left color=gray!50!black,right color=gray!50!black,middle color=gray!50,shading=axis,opacity=0.30] (-2.20,1,0) .. controls (-1,0,0,).. (0,0.5,0).. controls (1,1,0,)..  (2.05,0.5,0)  arc (30:-30:1).. controls (1,-1,0,).. (0,-0.5,0) .. controls (-1,0,0,).. (-2.20,-1,0) arc (200:160:2.9);
\draw (2.05,0.5,0)  arc (30:-30:1);
\draw (2.05,0.5,0) arc (170.5:-170.5:3);
\draw [dashed] (2.05,0.5,0) arc (170.5:210:3);
\draw (8,0,0) arc (5:-163:3 and 0.8);
\draw [dashed] (8,0,0) arc (5:197:3 and 0.8);
\draw (-2.20,1,0) .. controls (-1,0,0,).. (0,0.5,0).. controls (1,1,0,)..  (2.05,0.5,0)  arc (30:-30:1).. controls (1,-1,0,).. (0,-0.5,0) .. controls (-1,0,0,).. (-2.20,-1,0) arc (200:160:2.9);
\draw (-2.20,1,0) arc(24:337:2.47);
\draw [dashed ] (-2.20,1,0) arc(24:-24:2.47); 
\draw (-6.92,0,0) arc(180:328:2.47 and 0.8);
\draw [dashed] (-6.92,0,0) arc(180:-32:2.47 and 0.8);

\draw (1,0.88,0,) arc(90:-90:0.4 and 0.87);
\draw [dashed](1,0.88,0,) arc(90:270:0.4 and 0.87);
\draw (-1,0.18,0,) arc(90:-90:0.08 and 0.18);
\draw [dashed] (-1,0.18,0,) arc(90:270:0.08 and 0.18);
\fill[left color=gray!50!black,right color=gray!50!black,middle color=gray!50,shading=axis,opacity=0.30] (-2.20,-1,0) arc (200:160:2.9) -- (-4.5,1,0) arc (90:270:0.2 and 1); 
\draw [dashed] (-2.20,1,0) -- (-4.5,1,0) arc (90:270:0.2 and 1) -- (-2.20,-1,0); 
\draw [dashed] (-4.5,1,0) arc (90:-90:0.2 and 1);
\fill[left color=gray!50!black,right color=gray!50!black,middle color=gray!50,shading=axis,opacity=0.30] (2.05,0.5,0) -- (4,0.5,0) arc (90:-90:0.2 and 0.53) -- (2.05,-0.5,0);
\draw [dashed] (2.05,0.5,0) -- (4,0.5,0) arc (90:-90:0.2 and 0.53) -- (2.05,-0.5,0);
\draw [dashed] (4,0.5,0) arc (90:270:0.2 and 0.53);
\draw (0,0,0) node {$E_{\epsilon}$};
\end{tikzpicture}
\caption{Extended tube $E_{\epsilon}$.}
\label{fig3}
\end{figure}
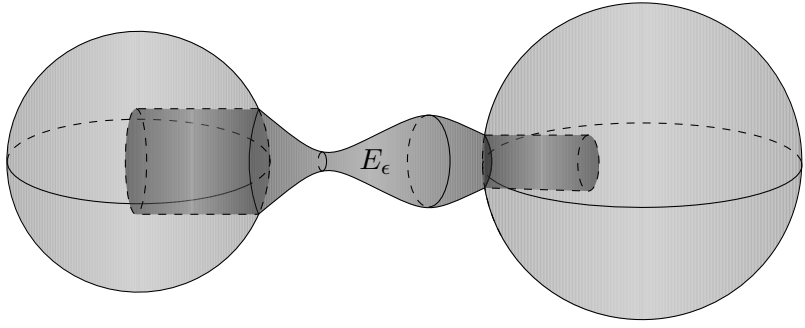

Repeating all the arguments in Lemma \ref{lemcn2}, we obtain that for all $k\geq 2$ 
\begin{equation*}
v_k^{\epsilon} \rightharpoonup \overline{V}_k \quad \text{in} \quad H^1(E_1)
\end{equation*}
and $\overline{V}_k$ depends only on $x_1$. We also know from Lemma \ref{lemcn1} that, for all $k\geq 1$, $u_k^{\epsilon}$ locally uniformly converge to $c_{1,k}$ in $D_1$ and to $c_{2,k}$ in $D_2$.

The following lemma contains a key result on the asymptotic behaviour of the first eigenfunctions.
\begin{lemma}\label{lemcn3}
Let $n\geq 3$ and let $\Omega_{\epsilon}\subset \mathbb{R}^n$ be a dumbbell shape domain then the following holds 
\begin{equation}
\lim_{\epsilon\rightarrow 0}\int_{\partial T_{\epsilon}}(u^{\epsilon}_1)^2d\mathcal{H}^{n-1}=0.
\end{equation} 
In particular $c_{1,1}\neq 0, c_{2,1}\neq 0$.
\end{lemma}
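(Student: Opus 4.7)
The second assertion of the lemma follows from the first: by Lemma~\ref{lemcn1} and the compactness of the trace operator $H^1(D_i) \to L^2(\partial D_i)$, $u_1^\epsilon \to c_{i,1}$ strongly in $L^2(\partial D_i)$, whence $\int_{\partial D_i \cap \partial\Omega_\epsilon} (u_1^\epsilon)^2\,d\mathcal{H}^{n-1} \to c_{i,1}^2\,P(D_i)$. Once $\int_{\partial T_\epsilon}(u_1^\epsilon)^2\,d\mathcal{H}^{n-1}\to 0$ is established, the normalization $\int_{\partial \Omega_\epsilon}(u_1^\epsilon)^2 = 1$ forces $c_{1,1}^2 P(D_1) + c_{2,1}^2 P(D_2) = 1$; combined with \eqref{ec1} and $P(D_i)>0$ this excludes $c_{1,1}=0$ or $c_{2,1}=0$.

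The main vanishing estimate rests on the sharp upper bound $\sigma_1^\epsilon \leq C\epsilon^{n-1}$ from \eqref{ubs1}, which is $\epsilon^{n-2}$-times smaller than the generic $O(\epsilon)$ bound exploited in Lemma~\ref{lemcn2}. I would introduce the cross-sectional mean
\[
\bar u(x_1) := \frac{1}{|B^{n-1}_{\epsilon\rho(x_1)}|}\int_{B^{n-1}_{\epsilon\rho(x_1)}(x_1)} u_1^\epsilon \, d\mathcal{H}^{n-1},
\]
and split $\int_{\partial T_\epsilon^e}(u_1^\epsilon)^2 \leq 2\int(u_1^\epsilon - \bar u)^2 + 2\int \bar u^2$. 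For the oscillation piece, a scaled Poincar\'e--Wirtinger and trace inequality on the $(n-1)$-dimensional ball of radius $\epsilon\rho(x_1)$ gives
\[
\int_{\partial B^{n-1}_{\epsilon\rho(x_1)}}(u_1^\epsilon - \bar u(x_1))^2\,d\mathcal{H}^{n-2} \leq C\,\epsilon\rho(x_1)\int_{B^{n-1}_{\epsilon\rho(x_1)}} |\nabla_{x'} u_1^\epsilon|^2\,d\mathcal{H}^{n-1}.
\]
Integrating in $x_1$ by means of the area formula \eqref{surfmeas2} and using $\int_{T_\epsilon}|\nabla u_1^\epsilon|^2 \leq \sigma_1^\epsilon \leq C\epsilon^{n-1}$ yields $\int_{\partial T_\epsilon^e}(u_1^\epsilon - \bar u)^2\,d\mathcal{H}^{n-1} \leq C\epsilon^n \to 0$ for $n \geq 3$.

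For the mean piece, $|\partial T_\epsilon^e| \leq C\epsilon^{n-2}$ reduces the question to a uniform $L^\infty$ bound on $\bar u$. The plan is to establish uniform $H^1$ control in $x_1$: differentiating $\bar u$ (via Reynolds' transport theorem on the $x_1$-varying ball $B^{n-1}_{\epsilon\rho(x_1)}$) and applying Cauchy--Schwarz on each cross-section gives a principal contribution bounded by $C\sigma_1^\epsilon / \epsilon^{n-1} \leq C$. Extending to the extended tube $E_\epsilon$ introduced above, the uniform local convergence $u_1^\epsilon \to c_{i,1}$ in $D_i$ furnished by Lemma~\ref{lemcn1} yields $\bar u(x_1) \to c_{i,1}$ uniformly on the cylindrical end extensions; since the constants $c_{i,1}$ are bounded by $P(D_i)^{-1/2}$ (from the normalization and the $L^2(\partial D_i)$-trace convergence), the one-dimensional Sobolev embedding $H^1 \hookrightarrow C^{0,1/2}$ forces $\|\bar u\|_{L^\infty(-L/2-\xi,\,L/2+\xi)} \leq C$. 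Then $\int_{\partial T_\epsilon^e}\bar u^2\,d\mathcal{H}^{n-1} \leq C\epsilon^{n-2}\to 0$, completing the argument.

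The technical heart of the proof is absorbing the boundary contribution appearing in the Reynolds formula for $\bar u'$: this term is of the form $\epsilon\rho'(x_1)\int_{\partial B^{n-1}_{\epsilon\rho(x_1)}} u_1^\epsilon\,d\mathcal{H}^{n-2}$ and a priori involves the $L^2$ trace of $u_1^\epsilon$ on $\partial T_\epsilon^e$ that we are trying to bound. A bootstrap argument, starting from a preliminary qualitative smallness of $\int_{\partial T_\epsilon^e}(u_1^\epsilon)^2$ and iterating to improve the decay rate up to $\epsilon^{n-2}$, or alternatively replacing $\bar u$ by a $C^\infty_c$-smoothed average in $x_1$ that produces no Reynolds boundary term, closes this circularity.
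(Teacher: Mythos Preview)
Your cross-sectional mean decomposition is genuinely different from the paper's proof, which proceeds by contradiction: assuming $\int_{\partial T_\epsilon}(u_1^\epsilon)^2 \geq \alpha > 0$, a scaled trace inequality on each section gives $\int_{T_\epsilon}(u_1^\epsilon)^2 \gtrsim \epsilon$, and then a case split (either $c_{1,1}c_{2,1}<0$, so $u_1^\epsilon(\cdot,x')$ changes sign along the extended tube and a one-dimensional Poincar\'e inequality bounds the $L^2$-mass by the gradient; or $c_{1,1}=c_{2,1}=0$, and the rescaled function $g_1^\epsilon = \epsilon^{(n-2)/2}u_1^\epsilon(x_1,\epsilon x')$ is shown to converge weakly to a nonzero constant that must simultaneously vanish on the extension) forces $\int|\nabla u_1^\epsilon|^2 \gtrsim \epsilon$, contradicting $\sigma_1^\epsilon \leq C\epsilon^{n-1}$. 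Your direct route is attractive, but the gap you flag at the Reynolds term is real and neither proposed fix closes it. A bootstrap starting from $\int_{\partial T_\epsilon}(u_1^\epsilon)^2 \leq M$ feeds $CM/\epsilon^{n-2}$ into $\int(\bar u')^2$ and hence $CM$ back into $\int_{\partial T_\epsilon}\bar u^2$, yielding only $M \leq C\epsilon^{n-2} + CM$ with an uncontrolled constant, so nothing improves; and smoothing in $x_1$ alone does not eliminate the moving-boundary contribution in the sectional average.

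The missing ingredient is an algebraic cancellation. Writing $\bar u = I/V$ with $I(x_1)=\int_{B_{\epsilon\rho}^{n-1}}u_1^\epsilon$ and $V=\omega_{n-1}(\epsilon\rho)^{n-1}$, the quotient rule gives
\[
\bar u' \;=\; \frac{1}{V}\int_B \partial_{x_1}u_1^\epsilon \;+\; \frac{\epsilon\rho'}{V}\int_{\partial B}u_1^\epsilon \;-\; (n-1)\frac{\rho'}{\rho}\,\bar u,
\]
and since $\epsilon\rho'\,|\partial B|/V = (n-1)\rho'/\rho$ the last two terms combine into $\tfrac{\epsilon\rho'}{V}\int_{\partial B}(u_1^\epsilon - \bar u)$. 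This now involves only the oscillation, which your own Poincar\'e--Wirtinger/trace step already bounds by $C\epsilon\rho\int_B|\nabla_{x'}u_1^\epsilon|^2$; after Cauchy--Schwarz and integration in $x_1$ its contribution to $\int(\bar u')^2$ is at most $C\epsilon^{-(n-1)}\int_{T_\epsilon}|\nabla u_1^\epsilon|^2 \leq C$. The circularity disappears, $\|\bar u\|_{L^\infty}$ is uniformly bounded via the extended-tube endpoint values and the one-dimensional Sobolev embedding, and your estimate goes through. In fact it then delivers the sharper rate $\int_{\partial T_\epsilon}(u_1^\epsilon)^2 = O(\epsilon^{n-2})$, which is precisely Lemma~\ref{lemcn11}, proved in the paper by a second, separate contradiction argument.
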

\begin{proof}
By contradiction we assume that there exists $\alpha>0$ such that for $\epsilon$ small enough
\begin{equation}\label{contr1}
\int_{\partial T_{\epsilon}}(u^{\epsilon}_1)^2d\mathcal{H}^{n-1}\geq \alpha >0.
\end{equation}
 For every $x_1\in(-\frac{L}{2},\frac{L}{2})$ we consider the ball $B_{\epsilon\rho(x_1)}^{n-1}(x_1)$, that is  $x_1-$section of the tube $T_{\epsilon}^e$. We use the Trace Theorem in any section of the tube $T_{1}^e$ and we rescale to the sections of the tube $T_{\epsilon}^e$, we obtain, for all $x_1\in(-\frac{L}{2},\frac{L}{2})$:
\begin{equation*}
\int_{\partial B_{\epsilon\rho(x_1)}^{n-1}(x_1)}(u^{\epsilon}_1)^2d\mathcal{H}^{n-2}\leq \frac{1}{\epsilon} \int_{B_{\epsilon\rho(x_1)}^{n-1}(x_1)}(u^{\epsilon}_1)^2d\mathcal{H}^{n-1}+\epsilon  \int_{B_{\epsilon\rho(x_1)}^{n-1}(x_1)}|\nabla_{x'}u^{\epsilon}_1|^2d\mathcal{H}^{n-1}.
\end{equation*}
Integrating this inequality in $x_1$ we obtain 

\begin{equation*}
\epsilon\int_{\partial T_{\epsilon}^e}(u^{\epsilon}_1)^2d\mathcal{H}^{n-1}\leq\int_{ T_{\epsilon}^e}(u^{\epsilon}_1)^2dx+\epsilon^2  \int_{T_{\epsilon}^e}|\nabla_{x'}u^{\epsilon}_1|^2dx,
\end{equation*}
from this inequality, the fact that $\sigma_1^{\epsilon}\leq C\epsilon^{n-1}$ and inequality \eqref{contr1} we finally have 
\begin{equation}\label{contr2}
\epsilon\frac{\alpha}{2}\leq \int_{ T_{\epsilon}^e}(u^{\epsilon}_1)^2dx.
\end{equation}
From the fact that $\int_{\partial \Omega_{\epsilon}}u^{\epsilon}_1d\mathcal{H}^{n-1}=0$ for all $\epsilon$ we have two cases: either $c_{1,1}=c_{2,1}=0$ or $c_{1,1}$ and  $c_{2,1}$ have opposite sign, say $c_{1,1}>0>c_{2,1}$.

If $c_{1,1}>0>c_{2,1}$, then at $x'$ fixed the function $x_1\rightarrow u_1^{\epsilon}(x_1,x')$ change sign. In particular there exist $C>0$ such that:
\begin{equation*}
\int_{-\frac{L}{2}-\xi}^{\frac{L}{2}+\xi}\big ( \frac{\partial u_k^{\epsilon} }{\partial x_1} \big )^2(x_1,x')dx_1\geq C\int_{-\frac{L}{2}}^{\frac{L}{2}} ( u_k^{\epsilon})^2(x_1,x')dx_1,
\end{equation*}
by integrating in $x'$ we finally obtain 
\begin{equation*}
\int_{E_{\epsilon}}|\nabla u_k^{\epsilon}|^2dx\geq C\epsilon \frac{\alpha}{2}.
\end{equation*}
which is a contradiction to the fact that  $\sigma_1^{\epsilon}\leq C\epsilon^{n-1}$.
 
 If $c_{1,1}=c_{2,1}=0$, we define $g_1^{\epsilon}(x_1,x')=\epsilon^{\frac{n-2}{2}}u_1^{\epsilon}(x_1,\epsilon x')$. Using the same argument in Lemma \ref{lemcn2} we  conclude that there exist $\overline{G}_1\in H^1(E_1)$ such that 
\begin{equation*}
g_1^{\epsilon} \rightharpoonup \overline{G}_1 \quad \text{in} \quad H^1(E_1).
\end{equation*}
From this convergence, the fact that $\int_{\partial\Omega_{\epsilon}}(u^{\epsilon}_k)^2d\mathcal{H}^{n-1}=1$ for all $\epsilon$ and the assumption $c_{1,1}=c_{2,1}=0$ we get
\begin{align*}
\int_{\partial T_1^e} \overline{G}_1^2d\mathcal{H}^{n-1}=1\qquad \mbox{and} \qquad
\int_{\partial D_1\cup \partial D_2}(u_1^{\epsilon})^2d\mathcal{H}^{n-1}\rightarrow 0.
\end{align*}
From this consideration we conclude that 
\begin{equation*}
C\epsilon^{n-1}\geq \sigma_1^{\epsilon}\geq \int_{E_1}|\nabla \overline{G}_1|^2dx,
\end{equation*}
hence $\int_{E_1}|\nabla \overline{G}_1|^2dx=0$. In particular $\overline{G}_1$ is almost everywhere constant $C\neq 0$. Consider a point in the extremity on the extended tube $E_1$, where we know that $u_1^{\epsilon}$ uniformly converge to a constant. More precisely we find $\overline{x}=(\overline{x}_1,\overline{x}')\in E_1$ such that the following holds 
\begin{align*}
g_1^{\epsilon}(\overline{x}_1,\overline{x}')\rightarrow C>0\quad \mbox{and} \quad
g_1^{\epsilon}(\overline{x}_1,\overline{x}')=\epsilon^{\frac{n-2}{2}}u_1^{\epsilon}(\overline{x}_1,\epsilon\overline{x}')\rightarrow 0.
\end{align*} 
This is a contradiction.

\end{proof}
We now prove that  $c_{1,k}=c_{2,k}=0$ for all $k\geq 2$, concluding that  $V_k$ is not identically zero for all $k\geq 2$.
\begin{lemma}\label{lemcn4}
Let $n\geq 3$, $k\geq 2$ and let let $\Omega_{\epsilon}\subset \mathbb{R}^n$ be a dumbbell shape domain. The following holds
\begin{align*}
u_k^{\epsilon} &\rightharpoonup 0\quad \text{in} \quad H^1(D_i),\\
u_k^{\epsilon} &\rightarrow 0 \quad \text{locally uniformly in} \quad D_i.
\end{align*}  
\end{lemma}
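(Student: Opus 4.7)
The plan is to exploit two orthogonality relations between $u_k^{\epsilon}$ and $u_1^{\epsilon}$ on $\partial\Omega_{\epsilon}$, and push them to the limit using the key information provided by Lemma \ref{lemcn3}: namely, that the $L^2$-mass of $u_1^{\epsilon}$ on $\partial T_{\epsilon}$ vanishes, and that $c_{1,1}, c_{2,1}\neq 0$.

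First, I would recall that equation \eqref{ec1} already gives, for every $k\ge 1$,
\begin{equation*}
c_{1,k}P(D_1)+c_{2,k}P(D_2)=0.
\end{equation*}
Applied to $k=1$, this says $c_{1,1}$ and $c_{2,1}$ have opposite (nonzero) signs. Next, from $L^2(\partial\Omega_\epsilon)$-orthogonality of distinct Steklov eigenfunctions, for $k\ge 2$,
\begin{equation*}
0=\int_{\partial\Omega_{\epsilon}}u_k^{\epsilon}u_1^{\epsilon}\,d\mathcal{H}^{n-1}=\int_{\partial D_1}u_k^{\epsilon}u_1^{\epsilon}+\int_{\partial D_2}u_k^{\epsilon}u_1^{\epsilon}+\int_{\partial T_{\epsilon}}u_k^{\epsilon}u_1^{\epsilon}.
\end{equation*}
On $\partial D_i$, Lemma \ref{lemcn1} gives the locally uniform (and in particular $L^2(\partial D_i)$) convergences $u_k^{\epsilon}\to c_{i,k}$ and $u_1^{\epsilon}\to c_{i,1}$, so the first two terms tend to $c_{1,k}c_{1,1}P(D_1)+c_{2,k}c_{2,1}P(D_2)$. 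For the tube term, Cauchy--Schwarz and Lemma \ref{lemcn3} give
\begin{equation*}
\Big|\int_{\partial T_{\epsilon}}u_k^{\epsilon}u_1^{\epsilon}\,d\mathcal{H}^{n-1}\Big|\le\|u_k^{\epsilon}\|_{L^2(\partial\Omega_\epsilon)}\|u_1^{\epsilon}\|_{L^2(\partial T_\epsilon)}\to 0,
\end{equation*}
since $\|u_k^{\epsilon}\|_{L^2(\partial\Omega_\epsilon)}=1$. Passing to the limit, I obtain the second relation
\begin{equation*}
c_{1,1}c_{1,k}P(D_1)+c_{2,1}c_{2,k}P(D_2)=0.
\end{equation*}

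Combining this with \eqref{ec1}, the pair $(c_{1,k},c_{2,k})$ solves the linear system with matrix
\begin{equation*}
\begin{pmatrix}P(D_1) & P(D_2)\\ c_{1,1}P(D_1) & c_{2,1}P(D_2)\end{pmatrix},
\end{equation*}
whose determinant is $P(D_1)P(D_2)(c_{2,1}-c_{1,1})$. Since $c_{1,1}$ and $c_{2,1}$ are nonzero and of opposite sign, this determinant is nonzero, forcing $c_{1,k}=c_{2,k}=0$. Together with Lemma \ref{lemcn1}, this gives both the weak $H^1(D_i)$ convergence $u_k^{\epsilon}\rightharpoonup 0$ and the locally uniform convergence $u_k^{\epsilon}\to 0$ in $D_i$.

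The only real subtlety is the tube cross-term: one must know a priori that $u_1^{\epsilon}$ carries no $L^2$-mass on $\partial T_{\epsilon}$ in the limit, otherwise the limiting system is underdetermined. This is precisely the content of Lemma \ref{lemcn3}, and it is what distinguishes the first eigenfunction (whose boundary mass stays concentrated on $\partial D_1\cup\partial D_2$) from the higher ones (whose mass migrates into the tube and produces the nontrivial profile $V_k$).
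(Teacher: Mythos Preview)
Your proof is correct and follows essentially the same approach as the paper: pass the orthogonality $\int_{\partial\Omega_\epsilon}u_1^\epsilon u_k^\epsilon=0$ to the limit (using Lemma \ref{lemcn3} to kill the tube term) to obtain $c_{1,1}c_{1,k}P(D_1)+c_{2,1}c_{2,k}P(D_2)=0$, and combine with \eqref{ec1}. Your determinant argument is a cleaner way to conclude than the paper's ratio manipulation; one small remark is that the $L^2(\partial D_i)$ convergence you invoke follows from the weak $H^1(D_i)$ convergence in Lemma \ref{lemcn1} via compactness of the trace, not from the locally uniform interior convergence as your parenthetical suggests.
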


\begin{proof} We start by fixing $k\geq 2$.
Form the orthogonality condition of the Steklov eigenfunctions we know that
\begin{equation*}
\int_{\partial \Omega_{\epsilon}}u_1^{\epsilon}u_k^{\epsilon}d\mathcal{H}^{n-1}=\int_{\partial D_1}u_1^{\epsilon}u_k^{\epsilon}d\mathcal{H}^{n-1}+\int_{\partial T_{\epsilon}^e}u_1^{\epsilon}u_k^{\epsilon}d\mathcal{H}^{n-1}+\int_{\partial D_2}u_1^{\epsilon}u_k^{\epsilon}d\mathcal{H}^{n-1}=0.
\end{equation*} 
From this equality and Lemmas \ref{lemcn1}, \ref{lemcn2} and \ref{lemcn3} we  obtain the equality 
\begin{equation*}
c_{1,k}c_{1,1}P(D_1)+c_{2,k}c_{2,1}P(D_2)=0
\end{equation*}
which, together with
\begin{equation*}
\begin{cases}
c_{1,k}P(D_1)+c_{2,k}P(D_2)=0\\
c_{1,1}P(D_1)+c_{2,1}P(D_2)=0,
\end{cases}
\end{equation*}
lead to a system of equations.

We know that $c_{1,1}\neq 0$ and $c_{2,1}\neq 0$  and (without loss of generality) $c_{1,1}>0> c_{2,1}$. Suppose by contradiction that $c_{2,k}\neq 0$, from the system above we have the following 
\begin{equation*}
\frac{c_{1,1}}{c_{2,1}}=\frac{c_{1,k}}{c_{2,k}}=\frac{c_{1,1}c_{1,k}}{c_{2,1}c_{2,k}},
\end{equation*}
it means that $c_{1,1}=c_{2,1}$ and $c_{1,k}= c_{2,k}$ that is a contradiction. We obtain the same conclusion if we suppose that $c_{1,k}\neq 0$.
\end{proof}
Let $V_k$ be the restriction to $x_1$ of the limit eigenfunction $\overline{V}_k$ in Lemma \ref{lemcn2}, in the next lemma we prove that $V_k$ is not constant and we find the boundary conditions of $V_k$.
\begin{lemma}\label{lemcn5}
Let $n\geq 3$, $k\geq 2$ and let $V_k$ be the restriction to $x_1$ of the limit eigenfunction $\overline{V}_k$ in Lemma \ref{lemcn2} . Then
\begin{enumerate}
\item[$\bullet$] $V_k$ is continuous
\item[$\bullet$] $V_k(-\frac{L}{2})=V_k(\frac{L}{2})=0$
\item[$\bullet$] $V_k$ is not constant
\end{enumerate}
\end{lemma}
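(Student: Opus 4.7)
The plan is to treat the three assertions in the order they are stated, reusing the extended‐tube machinery introduced just before the statement and the vanishing of the $D_i$-limits established in Lemma~\ref{lemcn4}.

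First I would handle continuity. Repeating the argument of Lemma~\ref{lemcn2} on the extended tube $E_\epsilon$ in \eqref{extt}, the rescaled functions $v_k^\epsilon$ are bounded in $H^1(E_1)$ and converge weakly to $\overline{V}_k$ there. Since $\overline{V}_k$ depends only on $x_1$, its restriction $V_k$ belongs to $H^1(-L/2-\xi,L/2+\xi)$, and the one-dimensional Sobolev embedding gives $V_k\in C^0([-L/2-\xi,L/2+\xi])$.

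Next I would derive the Dirichlet boundary conditions. The key observation is that the left part of $E_1\setminus T_1$, namely $\{x_1<-L/2,\,|x'|<\overline{\rho}(-L/2)\}$, is mapped by $\Phi_\epsilon$ into $D_1$ for $\epsilon$ small enough. Writing $v_k^\epsilon(x_1,x')=\epsilon^{(n-2)/2}u_k^\epsilon(x_1,\epsilon x')$ and combining the factor $\epsilon^{(n-2)/2}\to 0$ with the locally uniform convergence $u_k^\epsilon\to c_{1,k}=0$ from Lemmas~\ref{lemcn1} and \ref{lemcn4}, I get $v_k^\epsilon\to 0$ locally uniformly on that left extension, hence $\overline{V}_k\equiv 0$ there, i.e.\ $V_k\equiv 0$ on $(-L/2-\xi,-L/2)$. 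Continuity of $V_k$ then forces $V_k(-L/2)=0$; the same reasoning at the right extremity gives $V_k(L/2)=0$.

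Finally, to see that $V_k$ is not constant, I would pass to the limit in the normalization $\int_{\partial\Omega_\epsilon}(u_k^\epsilon)^2\,d\mathcal{H}^{n-1}=1$, split as
\[
\int_{\partial D_1}(u_k^\epsilon)^2+\int_{\partial D_2}(u_k^\epsilon)^2+\int_{\partial T_\epsilon^e}(u_k^\epsilon)^2\,d\mathcal{H}^{n-1}=1.
\]
The first two contributions tend to $c_{1,k}^2 P(D_1)+c_{2,k}^2P(D_2)=0$ by Lemma~\ref{lemcn4} together with the compactness of the trace $H^1(D_i)\to L^2(\partial D_i)$. For the tube integral I would use the surface-measure formula \eqref{surfmeas2} and the identity $u_k^\epsilon(x_1,\epsilon x')=\epsilon^{-(n-2)/2}v_k^\epsilon(x_1,x')$ to rewrite it as
\[
\int_{-L/2}^{L/2}\!\!\int_{\mathbb{S}^{n-2}}\bigl(v_k^\epsilon(x_1,\rho(x_1)\theta)\bigr)^2\rho(x_1)^{n-2}\sqrt{1+\epsilon^2\rho'^2}\,dx_1\,d\sigma(\theta),
\]
which, by the compact trace $H^1(T_1)\to L^2(\partial T_1^e)$ and $v_k^\epsilon\rightharpoonup \overline{V}_k$, converges to $w_{n-2}\int_{-L/2}^{L/2}V_k^2\rho^{n-2}\,dx_1$. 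Therefore $w_{n-2}\int V_k^2\rho^{n-2}\,dx_1=1$, so $V_k\not\equiv 0$, and since $V_k(\pm L/2)=0$ a constant $V_k$ would vanish identically, a contradiction.

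The delicate point is really the last step: one needs the strong $L^2$ convergence of the rescaled boundary traces in order to identify the limit of the tube integral with $w_{n-2}\int V_k^2\rho^{n-2}\,dx_1$; the bound $\|v_k^\epsilon\|_{H^1(T_1)}\le C$ obtained in Lemma~\ref{lemcn2} (extended to $E_1$) plus the compactness of the trace operator takes care of it.
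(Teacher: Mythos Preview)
Your proof is correct and follows essentially the same route as the paper. The paper establishes the normalization limit $\int_{\partial\Omega_\epsilon}(u_k^\epsilon)^2\to c_{1,k}^2P(D_1)+c_{2,k}^2P(D_2)+w_{n-2}\int V_k^2\rho^{n-2}$ separately (just before Lemma~\ref{lemcn4}) and then invokes it in one line for the third bullet, whereas you fold that computation directly into the proof; your treatment of the boundary values via locally uniform convergence on the whole left extension is also slightly more explicit than the paper's pointwise version, but the underlying mechanism is identical.
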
 
\begin{proof}
The first point is immediate because we know that, if we consider the extended tube $E_1$, $V_k\in H^1((-\frac{L}{2}-\xi,\frac{L}{2}+\xi))$, by classical embedding theorem we have $V_k\in C((-\frac{L}{2}-\xi,\frac{L}{2}+\xi))$.
We prove the second point, we know, from Lemma \ref{lemcn4} that $u_k^{\epsilon}$ locally uniformly converge to $0$ in $D_1$. From this fact we have that 
\begin{equation*}
v_k^{\epsilon}(-\frac{L}{2}-\delta,0)=\epsilon^{\frac{n-2}{2}}u_k^{\epsilon}(-\frac{L}{2}-\delta,0)\rightarrow 0=V_k(-\frac{L}{2}-\delta)\quad \forall \, \xi\geq \delta>0.
\end{equation*}
From the continuity of $V_k$ we conclude that 
\begin{equation*}
V_k(-\frac{L}{2})=\lim_{\delta\rightarrow 0} V_k(-\frac{L}{2}-\delta)=0.
\end{equation*}
The same is true for $V_k(\frac{L}{2})$. The tird point is a direct consequence of this, indeed if $V_k$ is constant then $V_k$ must be equal to zero and this is a contradiction with Lemma \ref{lemcn4} 
\end{proof}

\subsection{Proof of Theorem \ref{th2}}
In this section we  prove Theorem \ref{th2}. We will first prove a bound from below for the asymptotic of $\sigma_k^{\epsilon}$. Then we will prove that this bound from below is also a bound from above finding in that way the right asymptotic of $\sigma_k^{\epsilon}$.
\begin{lemma}\label{bb1}
Let $n\geq 3$ and $k\geq 2$  then, for $\epsilon$ small enough 
\begin{equation}\label{ebb1}
\sigma_k^{\epsilon}\geq \alpha_{k-1}\epsilon+o(\epsilon) 
\end{equation}
where $\alpha_k$ is defined by the following $1-$dimensional Dirichlet eigenvalue problem:
\begin{equation*}
\begin{cases}
\vspace{0.2cm}
   -w_{n-1}\frac{d}{dx}\big(\rho^{n-1}(x)\frac{d V_k}{dx}(x)\big)=\alpha_kw_{n-2}\rho^{n-2}(x)V_k(x)  \qquad  x\in \big(-\frac{L}{2},\frac{L}{2} \big) \\
   \vspace{0.2cm}
   V_k(-\frac{L}{2})=0 \\
   V_k(\frac{L}{2})=0.
\end{cases}
\end{equation*}
\end{lemma}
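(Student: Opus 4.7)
The plan is a Courant--Fischer type argument. Fix $k \geq 2$. From the upper bound \eqref{ubs2} the sequences $\sigma_j^\epsilon/\epsilon$ are bounded for $2 \leq j \leq k$; extract a subsequence $\epsilon_n \to 0$ along which $\sigma_j^{\epsilon_n}/\epsilon_n \to \beta_j$ and, by Lemma \ref{lemcn2}, $v_j^{\epsilon_n} \rightharpoonup \overline{V}_j$ in $H^1(T_1)$ with $\overline{V}_j$ independent of $x'$. Let $V_j$ denote its one-dimensional profile. By Lemma \ref{lemcn4} the functions $u_j^{\epsilon_n}$ converge to $0$ in $H^1(D_i)$ for $i=1,2$, and by Lemma \ref{lemcn5} each $V_j$ is a continuous non-zero element of $H_0^1\bigl(-\tfrac{L}{2},\tfrac{L}{2}\bigr)$.

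Next, I would identify $V_j$ as a Dirichlet eigenfunction of the limit problem with eigenvalue $\beta_j$. Testing the weak Steklov formulation with $\phi(x_1,x') = \psi(x_1)$ for $\psi \in C_c^\infty\bigl(-\tfrac{L}{2},\tfrac{L}{2}\bigr)$, extended by zero to $D_1\cup D_2$, rescaling the tube integrals through $\Phi_{\epsilon_n}$ together with the surface measure formula \eqref{surfmeas2}, and dividing by the appropriate power of $\epsilon_n$, one obtains in the limit
\begin{equation*}
\omega_{n-1}\int_{-L/2}^{L/2}\rho^{n-1} V_j' \psi'\,dx_1 = \beta_j\,\omega_{n-2}\int_{-L/2}^{L/2} \rho^{n-2} V_j \psi\,dx_1,
\end{equation*}
so $V_j$ satisfies $-\omega_{n-1}(\rho^{n-1}V_j')' = \beta_j\, \omega_{n-2}\rho^{n-2} V_j$ on $\bigl(-\tfrac{L}{2},\tfrac{L}{2}\bigr)$, while Lemma \ref{lemcn5} supplies the Dirichlet conditions. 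In particular each $\beta_j$ lies in the spectrum of \eqref{ep3}.

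The key step is producing $k-1$ orthogonal such eigenfunctions. Starting from $\int_{\partial\Omega_{\epsilon_n}} u_i^{\epsilon_n} u_j^{\epsilon_n}\,d\mathcal{H}^{n-1} = 0$ for $i \neq j$ in $\{2,\dots,k\}$ and decomposing over $\partial D_1 \cup \partial D_2 \cup \partial T_{\epsilon_n}^e$, Lemma \ref{lemcn4} kills the $D_i$ contributions; on the lateral part the rescaling and \eqref{surfmeas2} combine with the strong trace convergence $v_j^{\epsilon_n}\to V_j$ on $\partial T_1^e$ to yield in the limit $\omega_{n-2}\int_{-L/2}^{L/2} V_i V_j\, \rho^{n-2}\,dx_1 = 0$. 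Hence $V_2,\dots,V_k$ form a linearly independent family in $H_0^1\bigl(-\tfrac{L}{2},\tfrac{L}{2}\bigr)$ spanning a $(k-1)$-dimensional subspace $E$; any $f = \sum_{j=2}^k c_j V_j \in E$ has Rayleigh quotient equal to the convex combination $\sum c_j^2 \beta_j \|V_j\|_w^2 /\sum c_j^2 \|V_j\|_w^2 \leq \beta_k$, so the min-max characterisation of the Dirichlet eigenvalues of \eqref{ep3} yields $\alpha_{k-1} \leq \max_{f\in E\setminus\{0\}} R(f) \leq \beta_k$. Therefore $\sigma_k^{\epsilon_n}/\epsilon_n \to \beta_k \geq \alpha_{k-1}$, and since every subsequence of $\epsilon$ admits such an extraction leading to the same lower bound, $\sigma_k^{\epsilon}\geq \alpha_{k-1}\epsilon+o(\epsilon)$ holds for the whole sequence.

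The main obstacle is the limit passage in the Steklov orthogonality: one must carefully track the powers of $\epsilon$ coming from the two rescalings (that of $v_j^\epsilon$ and that of $d\mathcal{H}^{n-1}\llcorner\partial T_\epsilon^e$) and use the weak $H^1(T_1)$ convergence together with the compact trace to send $v_i^{\epsilon_n} v_j^{\epsilon_n}$ to $V_i V_j$ in $L^1$ on the lateral boundary. A subtler point is that Lemma \ref{lemcn4}, which kills the $D_i$ contributions, already rests on Lemma \ref{lemcn3} and hence on the fact that $\sigma_1^\epsilon$ decays at the faster rate $\epsilon^{n-1}$; this spectral-gap structure is precisely what forces the higher eigenfunctions ($k\geq 2$) to concentrate in the tube and deliver Dirichlet limits.
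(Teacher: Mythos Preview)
Your proof is correct and shares the paper's core strategy: pass to the limit profiles $V_2,\dots,V_k$ via Lemmas \ref{lemcn2}--\ref{lemcn5}, observe that they span a $(k-1)$-dimensional subspace of $H_0^1\bigl(-\tfrac{L}{2},\tfrac{L}{2}\bigr)$, and invoke the Courant--Fischer min-max for problem \eqref{ep3}.

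The implementation differs slightly. The paper works directly with the Steklov Rayleigh quotient: for any coefficients $(\beta_j)_{j=2}^k$ it uses $\sigma_k^\epsilon \geq R_\epsilon\bigl(\sum_j\beta_j u_j^\epsilon\bigr)$, drops the $D_i$ portion of the gradient, rescales to $T_1$, and passes to the limit (weak lower semicontinuity in the numerator, Lemma \ref{lemcn4} and trace compactness in the denominator) to get $\liminf \sigma_k^\epsilon/\epsilon \geq R_{\mathrm{Dir}}\bigl(\sum_j\beta_j V_j\bigr)$; maximising over $(\beta_j)$ and applying min-max yields $\alpha_{k-1}$. You instead first identify each $V_j$ as a genuine eigenfunction of \eqref{ep3} with eigenvalue $\beta_j=\lim\sigma_j^{\epsilon_n}/\epsilon_n$ --- an argument the paper carries out only later, in the proof of Theorem \ref{th2} --- and then use this together with the weighted-$L^2$ orthogonality to diagonalise the Dirichlet Rayleigh quotient on $E=\langle V_2,\dots,V_k\rangle$, bounding it above by $\beta_k$. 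Your route makes the $L^2_w$-orthogonality of the $V_j$'s fully explicit (it is needed in both arguments, to control the denominator and to ensure $\dim E=k-1$), at the price of importing the limit-equation identification earlier than strictly necessary for the lower bound alone.
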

\begin{proof}
We start by showing that there exists a constant $C_k>0$ such that $\sigma_k^{\epsilon}\geq C_k \epsilon$. Indeed we have 
\begin{align*}
\sigma_k^{\epsilon}=\frac{\int_{\Omega_{\epsilon}}|\nabla u_k^{\epsilon}|^2dx}{\int_{\partial\Omega_{\epsilon}}(u_k^{\epsilon})^2d\mathcal{H}^{n-1}}&\geq \frac{\epsilon^{n-2}\int_{T^e_{\epsilon}}|\nabla u_k^{\epsilon}|^2dx}{\epsilon^{n-2}\int_{\partial D_1\cup \partial D_2}(u_k^{\epsilon})^2d\mathcal{H}^{n-1}+\epsilon^{n-2}\int_{\partial T^e_{\epsilon}}(u_k^{\epsilon})^2d\mathcal{H}^{n-1}}\\
&\geq \epsilon \frac{\int_{T^e_{\epsilon}}|\nabla v_k^{\epsilon}|^2dx}{\int_{\partial D_1\cup \partial D_2}(u_k^{\epsilon})^2d\mathcal{H}^{n-1}+\int_{\partial T^e_{\epsilon}}(v_k^{\epsilon})^2d\mathcal{H}^{n-1}}.
\end{align*}
Now from Lemmas \ref{lemcn2} and \ref{lemcn4}, recalling that $V_k$ is the restriction to $x_1$ of the limit eigenfunction $\overline{V}_k$ in Lemma \ref{lemcn2} and recalling also the geometry of the tube, we finally obtain 
\begin{equation}\label{br}
\sigma_k^{\epsilon}=\frac{\int_{\Omega_{\epsilon}}|\nabla u_k^{\epsilon}|^2dx}{\int_{\partial\Omega_{\epsilon}}(u_k^{\epsilon})^2d\mathcal{H}^{n-1}}\geq \epsilon \frac{w_{n-1}\int_{-\frac{L}{2}}^{\frac{L}{2}}(V_k')^2\rho^{n-1}dx_1}{w_{n-2}\int_{-\frac{L}{2}}^{\frac{L}{2}}V_k^2\rho^{n-2}dx_1}.
\end{equation}
From this inequality and Lemma \ref{lemcn5} follow that there exists $C_k>0$ such that $\sigma_k^{\epsilon}\geq C_k \epsilon$. From the variational characterization of the Steklov eigenvalues we have 
\begin{equation*}
\sigma_k^{\epsilon}=\max_{u\in <u_1^{\epsilon},u_2^{\epsilon},...,u_k^{\epsilon}>}\frac{\int_{\Omega_{\epsilon}}|\nabla u|^2dx}{\int_{\partial\Omega_{\epsilon}}u^2d\mathcal{H}^{n-1}},
\end{equation*}  
where $<u_1^{\epsilon},u_2^{\epsilon},...,u_k^{\epsilon}>$ is the subspace of $H^1(\Omega_{\epsilon})$ generated by $u_1^{\epsilon},u_2^{\epsilon},...,u_{k-1}^{\epsilon}$ and $u_k^{\epsilon}$. We know that that the maximum is achieved when $u=u_k^{\epsilon}$, so we have 
\begin{align*}
\sigma_k^{\epsilon}&=\max_{u\in <u_2^{\epsilon},...,u_k^{\epsilon}>}\frac{\int_{\Omega_{\epsilon}}|\nabla u|^2dx}{\int_{\partial\Omega_{\epsilon}}u^2d\mathcal{H}^{n-1}}\\
&\geq \max_{\{\beta_j\}_{j=2}^k}\frac{\sum_{j=2}^k \beta_j^2\int_{T^e_{\epsilon}}|\nabla u_j^{\epsilon}|^2dx}{\sum_{j=2}^k \beta_j^2\int_{\partial\Omega_{\epsilon}}(u_j^{\epsilon})^2d\mathcal{H}^{n-1}}.
\end{align*}
From the inequality above and inequality \eqref{br} we obtain 
\begin{align}\notag
\sigma_k^{\epsilon}&\geq \epsilon \max_{\{\beta_j\}_{j=2}^k}\frac{\sum_{j=2}^k \beta_j^2w_{n-1}\int_{-\frac{L}{2}}^{\frac{L}{2}}(V_j')^2\rho^{n-1}dx_1}{\sum_{j=2}^k \beta_j^2w_{n-2}\int_{-\frac{L}{2}}^{\frac{L}{2}}V_j^2\rho^{n-2}dx_1}\\\label{rq2}
&\geq  \epsilon \max_{V\in <V2,...,V_k>}\frac{w_{n-1}\int_{-\frac{L}{2}}^{\frac{L}{2}}(V')^2\rho^{n-1}dx_1}{w_{n-2}\int_{-\frac{L}{2}}^{\frac{L}{2}}V^2\rho^{n-2}dx_1}.
\end{align}
From Lemma \ref{lemcn5} we know that $V_j\in H_0^1((-\frac{L}{2},\frac{L}{2}))$ for all $2\leq j\leq k$ and from the orthogonality of the Steklov eigenfunctions we also know that dim$<V2,...,V_k>=k-1$. It is easy to check that the ratio \eqref{rq2} is the Rayleigh quotient of the following eigenvalue problem 
\begin{equation*}
\begin{cases}
\vspace{0.2cm}
   -w_{n-1}\frac{d}{dx}\big(\rho^{n-1}(x)\frac{d V_k}{dx}(x)\big)=\alpha_kw_{n-2}\rho^{n-2}(x)V_k(x)  \qquad  x\in \big(-\frac{L}{2},\frac{L}{2} \big) \\
   \vspace{0.2cm}
   V_k(-\frac{L}{2})=0 \\
   V_k(\frac{L}{2})=0.
\end{cases}
\end{equation*}
From \eqref{rq2} we finally conclude that 
\begin{equation*}
\sigma_k^{\epsilon}\geq \alpha_{k-1}\epsilon+o(\epsilon) 
\end{equation*}
\end{proof}
We prove that the bound from below given in \eqref{ebb1} is in fact also a bound from above.
\begin{lemma}\label{lemba1}
Let $\alpha_k$ be the $k-$th eigenvalue of the problem \eqref{ep3} and let $\lambda_k^{\epsilon} $ be $k-$th eigenvalue of the problem \eqref{ep4}, then for $\epsilon$ small enough we have 
\begin{equation}\label{ce1}
\lambda_k^{\epsilon}\leq \alpha_{k-1}+o(\epsilon).
\end{equation} 
\end{lemma}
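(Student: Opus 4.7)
The plan is to apply the min-max variational characterization of $\lambda_k^\epsilon$, the $k$-th eigenvalue of problem \eqref{ep4}, and exhibit an efficient trial subspace built from the Dirichlet eigenfunctions of \eqref{ep3}, supplemented by the constant function. The guiding heuristic is that the coefficients $P(D_i)/\epsilon^{n-2}$ in the boundary conditions of \eqref{ep4} blow up as $\epsilon \to 0$, which penalizes non-vanishing boundary values and pushes the limit spectrum toward that of the Dirichlet problem \eqref{ep3}. The shift of one index (from $\lambda_k^\epsilon$ to $\alpha_{k-1}$) reflects the trivial eigenvalue $0$ of \eqref{ep4}, attained by constants, absent from the Dirichlet spectrum.

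Concretely, let $V_1, \ldots, V_{k-1}$ denote the first $k-1$ Dirichlet eigenfunctions of \eqref{ep3}, normalized so that $w_{n-2}\int_{-L/2}^{L/2}\rho^{n-2}V_iV_j\,dx_1 = \delta_{ij}$; testing the weak form of \eqref{ep3} with $V_i$ then yields $w_{n-1}\int_{-L/2}^{L/2}\rho^{n-1}V_i'V_j'\,dx_1 = \alpha_j\delta_{ij}$. Use the trial subspace $E := \mathrm{span}(\mathbf{1}, V_1, \ldots, V_{k-1})$ in the min-max for $\lambda_k^\epsilon$. For $u = c_0 + \sum_{j=1}^{k-1}c_jV_j \in E$, the Dirichlet boundary condition $V_j(\pm L/2)=0$ gives $u(\pm L/2) = c_0$, so the Rayleigh quotient attached to \eqref{ep4} reads
\begin{equation*}
R_\epsilon(u) = \frac{w_{n-2}\sum_{j=1}^{k-1}\alpha_j c_j^2}{\bigl(\tfrac{P(D_1)+P(D_2)}{\epsilon^{n-2}} + w_{n-2}M\bigr)c_0^2 + 2w_{n-2}c_0\sum_{j}c_j\gamma_j + w_{n-2}\sum_{j}c_j^2},
\end{equation*}
with $M := \int\rho^{n-2}\,dx_1$ and $\gamma_j := \int\rho^{n-2}V_j\,dx_1$. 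Crucially, the numerator is independent of $c_0$.

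To maximize $R_\epsilon$ over $E$ one minimizes the denominator in $c_0$ for fixed $c = (c_1,\ldots,c_{k-1})$. Completing the square in $c_0$, the huge coefficient $(P(D_1)+P(D_2))/\epsilon^{n-2}$ dominates and, combined with the Cauchy--Schwarz estimate $(\sum c_j\gamma_j)^2 \le (\sum\gamma_j^2)(\sum c_j^2)$, yields $\min_{c_0}\mathrm{denom}(u) \ge w_{n-2}\sum c_j^2 (1 - C\epsilon^{n-2})$, uniformly in $c$. Since $\max_{c\neq 0}\sum\alpha_jc_j^2/\sum c_j^2 = \alpha_{k-1}$, we conclude
\begin{equation*}
\lambda_k^\epsilon \le \max_{u \in E} R_\epsilon(u) \le \frac{\alpha_{k-1}}{1 - C\epsilon^{n-2}} = \alpha_{k-1} + O(\epsilon^{n-2}),
\end{equation*}
which for $n \ge 3$ yields the required estimate (and in particular is enough to conclude $\sigma_k^\epsilon \le \alpha_{k-1}\epsilon + o(\epsilon)$ in Theorem \ref{th2}). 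The main (minor) technical issue is the uniform control in $c$ of the cross-term $2w_{n-2}c_0\sum c_j\gamma_j$; completing the square together with Cauchy--Schwarz handles it cleanly, and no further delicate estimate is needed.
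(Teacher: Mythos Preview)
Your proof is correct and follows the same overall strategy as the paper: bound $\lambda_k^\epsilon$ from above via the min--max with a $k$-dimensional trial space built from the first $k-1$ Dirichlet eigenfunctions $V_1,\dots,V_{k-1}$ of \eqref{ep3} plus one extra function, and exploit the blow-up of the boundary weights $P(D_i)/\epsilon^{n-2}$ to force the coefficient of that extra function to be negligible.

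The one genuine difference is the choice of the extra function. The paper takes $\psi\in C^\infty$ with $\psi(-L/2)=1$, $\psi(L/2)=0$, and then argues (somewhat heuristically) that the maximizing coefficient $\beta_0(\epsilon)$ must satisfy $\beta_0(\epsilon)^2/\epsilon^{n-2}\to C<\infty$, whence $\beta_0(\epsilon)=O(\epsilon^{(n-2)/2})$ and all $\psi$-contributions drop out in the limit. You instead take the constant function $\mathbf{1}$, which makes the numerator literally independent of $c_0$; the optimization in $c_0$ then reduces to a clean completion of the square in the denominator, and Cauchy--Schwarz handles the cross-term uniformly in $c$. This is a neat simplification: it replaces the paper's informal scaling argument by an explicit quadratic minimization, and yields the quantitative remainder $O(\epsilon^{n-2})$ directly. (For $n=3$ this is $O(\epsilon)$ rather than the $o(\epsilon)$ written in the lemma, but the paper's own argument gives no better; and as you note, $\lambda_k^\epsilon\le\alpha_{k-1}+o(1)$ already suffices for $\sigma_k^\epsilon\le\alpha_{k-1}\epsilon+o(\epsilon)$.) One cosmetic point: in your displayed Rayleigh quotient the factor $w_{n-2}$ in the numerator and in the $\sum c_j^2$ term of the denominator should not be there under your normalization $w_{n-2}\int\rho^{n-2}V_iV_j=\delta_{ij}$; this does not affect the argument.
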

\begin{proof}
We choose $V_1,..,V_{k-1}$ eigenfunctions of the problem \eqref{ep3}, in particular $V_i(-\frac{L}{2})=V_i(\frac{L}{2})=0$ for all $i=1,...,k-1$. Now we define a function $\psi\in C^{\infty}(-\frac{L}{2},\frac{L}{2})$ such that $\psi(-\frac{L}{2})=1$, $\psi(\frac{L}{2})=0$ and dim$<\psi,V_1,..,V_{k-1}>=k$. Using the variational characterization of the eigenvalue $\lambda_k^{\epsilon}$ we obtain 
\begin{align}\notag
\lambda_k^{\epsilon}&\leq \max_{u\in <\psi,V_1,..,V_{k-1}>}\frac{w_{n-1}\int_{-\frac{L}{2}}^{\frac{L}{2}}\rho^{n-1}(u')^2dx_1}{\frac{P(D_1)}{\epsilon^{n-2}}u^2(-\frac{L}{2})+\frac{P(D_2)}{\epsilon^{n-2}}u^2(\frac{L}{2})+w_{n-2}\int_{-\frac{L}{2}}^{\frac{L}{2}}u^2\rho^{n-2}dx_1}\\\label{ep4r}
&\leq \max_{\beta_0(\epsilon)\cup\{\beta_j\}_{j=1}^{k-1}}\frac{w_{n-1}\int_{-\frac{L}{2}}^{\frac{L}{2}}\rho^{n-1}(\beta_0(\epsilon)\psi'+\sum_{i=1}^{k-1}\beta_iV_i' )^2dx_1}{\frac{P(D_1)}{\epsilon^{n-2}}\beta_0(\epsilon)^2+w_{n-2}\int_{-\frac{L}{2}}^{\frac{L}{2}}\rho^{n-2}(\beta_0(\epsilon)\psi+\sum_{i=1}^{k-1}\beta_iV_i )^2dx_1}.
\end{align} 
In the inequality above, in order to study the cases in full generality,  we must impose that  the first real coefficient $\beta_0$ depends on $\epsilon$, because the boundary conditions of the eigenvalue problem \eqref{ep4} depends on $\epsilon$. We define the following quantity 
\begin{equation*}
A_{\beta}(\epsilon)=\frac{w_{n-1}\int_{-\frac{L}{2}}^{\frac{L}{2}}\rho^{n-1}(\beta_0(\epsilon)\psi'+\sum_{i=1}^{k-1}\beta_iV_i' )^2dx_1}{\frac{P(D_1)}{\epsilon^{n-2}}\beta_0(\epsilon)^2+w_{n-2}\int_{-\frac{L}{2}}^{\frac{L}{2}}\rho^{n-2}(\beta_0(\epsilon)\psi+\sum_{i=1}^{k-1}\beta_iV_i )^2dx_1}.
\end{equation*}
It is easy to check that in order to get the maximum \eqref{ep4r} we must have that $\lim_{\epsilon\rightarrow 0}\frac{\beta_0(\epsilon)^2}{\epsilon^{n-2}}=C<\infty$. Otherwise if $\lim_{\epsilon\rightarrow 0}\frac{\beta_0(\epsilon)}{\epsilon^{n-2}}=\infty$ 
we will have that $\lim_{\epsilon\rightarrow 0}A_{\beta}(\epsilon)=0$, so we don't reach the maximum in this case. 

We conclude that $\beta_0(\epsilon)\sim \beta_0\epsilon^{\frac{n-2}{2}}+o(\epsilon^{\frac{n-2}{2}})$, (if $\beta_0(\epsilon)< \beta_0\epsilon^{\frac{n-2}{2}}+o(\epsilon^{\frac{n-2}{2}})$ we have the same results) from \eqref{ep4r} we obtain that 
\begin{align*}
\lambda_k^{\epsilon}&\leq \max_{\{\beta_j\}_{j=0}^{k-1}}\frac{w_{n-1}\int_{-\frac{L}{2}}^{\frac{L}{2}}\rho^{n-1}((\beta_0\epsilon^{\frac{n-2}{2}}+o(\epsilon^{\frac{n-2}{2}}))\psi'+\sum_{i=1}^{k-1}\beta_iV_i' )^2dx_1}{P(D_1)\beta_0^2+w_{n-2}\int_{-\frac{L}{2}}^{\frac{L}{2}}\rho^{n-2}((\beta_0\epsilon^{\frac{n-2}{2}}+o(\epsilon^{\frac{n-2}{2}}))\psi+\sum_{i=1}^{k-1}\beta_iV_i )^2dx_1}\\
&\leq \max_{\{\beta_j\}_{j=1}^{k-1}}\frac{\sum_{j=1}^{k-1} \beta_j^2w_{n-1}\int_{-\frac{L}{2}}^{\frac{L}{2}}(V_j')^2\rho^{n-1}dx_1}{\sum_{j=1}^{k-1} \beta_j^2w_{n-2}\int_{-\frac{L}{2}}^{\frac{L}{2}}V_j^2\rho^{n-2}dx_1}+o(\epsilon).
\end{align*}
Recalling that $V_1,..,V_{k-1}$ are eigenfunctions of the problem \eqref{ep3}, by the variational characterization of the eigenvalue of the problem \eqref{ep3} we finally conclude that 
\begin{equation*}
\alpha_{k-1}=\max_{\{\beta_j\}_{j=1}^{k-1}}\frac{\sum_{j=1}^{k-1} \beta_j^2w_{n-1}\int_{-\frac{L}{2}}^{\frac{L}{2}}(V_j')^2\rho^{n-1}dx_1}{\sum_{j=1}^{k-1} \beta_j^2w_{n-2}\int_{-\frac{L}{2}}^{\frac{L}{2}}V_j^2\rho^{n-2}dx_1}
\end{equation*}
and this concludes the proof.
\end{proof}
We are ready to prove Theorem \ref{th2}.
\begin{proof}[Proof of Theorem \ref{th2}]
From the second part of Lemma \ref{lemb1}, Lemma 
\ref{bb1} and Lemma \ref{lemba1}, we finally conlclude that for all $k\geq 2$ we have 
\begin{equation*}
\sigma_k^{\epsilon}\sim \alpha_{k-1} \epsilon+o(\epsilon) \quad \text{as} \quad \epsilon\rightarrow 0.
\end{equation*}
We prove  the convergence of the eigenfunctions, showing that $V_k$ must be the  $(k-1)$-th eigenfunction of  problem \eqref{ep2}. From Lemma \ref{lemcn5} we know that $V_k$ satisfies the Dirichlet boundary conditions, it remains to prove the fact that $V_k$ satisfies the eigenvalue equation.
We use the variational formulation of the Steklov problem using the following test function $\phi\in C_c^{\infty}(-\frac{L}{2},\frac{L}{2})$ (we constantly extend $\phi$ in the last variables $x'$), we obtain:
\begin{equation*}
\int_{T_{\epsilon}} \frac{\partial u_k^{\epsilon} }{\partial x_1} \frac{\partial \phi }{\partial x_1} dx=\sigma_k^{\epsilon} \int_{\partial T_{\epsilon}^e}  u_k^{\epsilon} \phi d\mathcal{H}^{n-1}.
\end{equation*}

We make the following change of variable $y_1=x_1$ and $y'=\epsilon x'$ in the right hand side of the variational formulation, using the formula \eqref{surfmeas2} for the surface measure in the right hand side we obtain 
\begin{align*}
\int_{T_1} \frac{\partial v_k^{\epsilon}}{\partial y_1} \frac{\partial \phi }{\partial y_1} dy&=\frac{\sigma_k^{\epsilon}\epsilon^{\frac{n-2}{2}}}{\epsilon}\big ( \int_{-\frac{L}{2}}^{\frac{L}{2}}\int_{\partial B_{\epsilon\rho(x_1)}^{n-1}(x_1)}u_k^{\epsilon}(x_1,\epsilon\tilde{\rho}(x_1)) \phi(x_1)\rho^{n-2}\sqrt{1+\epsilon^2\rho'^2}dx_1d\varphi_1...d\varphi_{n-2}\big )\\
&=\frac{\sigma_k^{\epsilon}}{\epsilon}\big ( \int_{-\frac{L}{2}}^{\frac{L}{2}}\int_{\partial B_{\rho(x_1)}^{n-1}(x_1)}v_k^{\epsilon}(x_1,\tilde{\rho}(x_1)) \phi(x_1)\rho^{n-2}\sqrt{1+\epsilon^2\rho'^2}dx_1d\varphi_1...d\varphi_{n-2}\big ),
\end{align*}
where $\tilde{\rho}(x_1)$ are the spherical coordinates that describes $\partial B_{\rho(x_1)}^{n-1}(x_1)$. 
Now we let $\epsilon$ goes to $0$, recalling that $v_k^{\epsilon} \rightharpoonup \overline{V}_k$  in $H^1(T_1)$ and $\sigma_k^{\epsilon}\sim \alpha_{k-1} \epsilon+o(\epsilon)$  we obtain
\begin{equation*}
\int_{T_1} \frac{\partial \overline{V}_k}{\partial y_1} \frac{\partial \phi }{\partial y_1} dy=\alpha_{k-1} \int_{-\frac{L}{2}}^{\frac{L}{2}} w_{n-2}\rho^{n-2}V_k \phi dx_1.
\end{equation*}
Now integrating by part the left hand side, we finally obtain
\begin{equation*}
-w_{n-1}\int_{-\frac{L}{2}}^{\frac{L}{2}} \frac{d}{dx_1} \big (  \rho^{n-1} \frac{d}{dx_1}  V_k \big ) \phi dx_1=\alpha_{k-1}\int_{-\frac{L}{2}}^{\frac{L}{2}} w_{n-2}\rho^{n-2} V_k \phi dx_1.
\end{equation*}
This relation is true for every test function $\phi\in C_c^{\infty}(-\frac{L}{2},\frac{L}{2})$ so we have 
\begin{equation*}
-w_{n-1}\frac{d}{dx}\big(\rho^{n-1}(x)\frac{d V_k}{dx}(x)\big)=\alpha_{k-1} w_{n-2}\rho^{n-2}(x) V_k(x)  \qquad  x\in \big(-\frac{L}{2},\frac{L}{2} \big),
\end{equation*}
and $V_k(-\frac{L}{2})=V_k(\frac{L}{2})=0$. Using the same argument as at the end of proof of Theorem \ref{th1} we conclude that the result is true for all the sequence $\{ \epsilon_n\}_{n=1}^{\infty}$. This concludes the proof.  
\end{proof}
\section{The case $n\geq 3$ and $k=1$. Proof of Theorem \ref{th3}.}
In this section we prove Theorem \ref{th3}.
\subsection{Convergence of the eigenfunctions}
From Lemma \ref{lemcn1}, we know that 
\begin{align*}
u_1^{\epsilon} &\rightharpoonup c_{i ,1}\quad \text{in} \quad H^1(D_i),\\
u_1^{\epsilon} &\rightarrow c_{i,2} \quad \text{locally uniformly in} \quad D_i,
\end{align*}
We also know that $c_{1 ,1}>0>c_{2,1}$, this information letting us to improve Lemma \ref{lemcn3}.
\begin{lemma}\label{lemcn11} 
Let $n\geq 3$ and let $\Omega_{\epsilon}\subset \mathbb{R}^n$ be a dumbbell shape domain. There exists a constant $C>0$ such that
\begin{equation}\label{asyef1}
\limsup_{\epsilon\rightarrow 0}\frac{\int_{\partial T_{\epsilon}}(u^{\epsilon}_1)^2d\mathcal{H}^{n-1}}{\epsilon^{n-2}}\leq C.
\end{equation}
\end{lemma}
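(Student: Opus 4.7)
The plan is to establish the quantitative refinement of Lemma \ref{lemcn3} by a rescaling argument that, in contrast with the case $k \geq 2$, requires no extra factor of $\epsilon^{(n-2)/2}$. The philosophy is that $u_1^\epsilon$ is approximately piecewise constant on the tube (close to the non-zero constants $c_{1,1}$ and $c_{2,1}$ near the two ends by Lemma \ref{lemcn3}), hence uniformly bounded in the original coordinates; once this boundedness is made quantitative, the claim follows from the fact that the surface measure of $\partial T_\epsilon^e$ is of order $\epsilon^{n-2}$.

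First, I would define on the extended stretched tube $E_1$ from \eqref{extt} the rescaled function
\begin{equation*}
v_1^\epsilon(y_1,y')=u_1^\epsilon(y_1,\epsilon y').
\end{equation*}
Using Lemma \ref{lemb1} (which gives $\sigma_1^\epsilon\le C\epsilon^{n-1}$) together with the change of variables $x=\Phi_\epsilon(y)$, one computes
\begin{equation*}
\int_{E_1}\Big(\frac{\partial v_1^\epsilon}{\partial y_1}\Big)^2 dy=\epsilon^{1-n}\int_{E_\epsilon}\Big(\frac{\partial u_1^\epsilon}{\partial x_1}\Big)^2 dx\le \epsilon^{1-n}\sigma_1^\epsilon\le C,
\end{equation*}
and similarly $\int_{E_1}|\nabla_{y'}v_1^\epsilon|^2 dy\le C\epsilon^2$, so that $\|\nabla v_1^\epsilon\|_{L^2(E_1)}\le C$.

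Next, I would bound $\|v_1^\epsilon\|_{L^2(E_1)}$ by a Poincar\'e-Wirtinger inequality on the fixed Lipschitz domain $E_1$. Choose a compact reference set $K$ contained in $\{y_1\in[-\tfrac{L}{2}-\xi,-\tfrac{L}{2}-\tfrac{\xi}{2}],\,|y'|\le \tfrac{1}{2}\rho(-\tfrac{L}{2})\}$; for $\epsilon$ small, $\Phi_\epsilon(K)\subset D_1$ lies in a compact subset of $D_1$, so by Lemma \ref{lemcn1} one has $v_1^\epsilon\to c_{1,1}$ uniformly on $K$, hence $|\mathrm{avg}_K v_1^\epsilon|\le C$. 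The Poincar\'e-Wirtinger inequality (with a constant depending only on $E_1$ and $K$) then yields
\begin{equation*}
\|v_1^\epsilon\|_{L^2(E_1)}\le \|v_1^\epsilon-\mathrm{avg}_K v_1^\epsilon\|_{L^2(E_1)}+|\mathrm{avg}_K v_1^\epsilon|\,|E_1|^{1/2}\le C\|\nabla v_1^\epsilon\|_{L^2(E_1)}+C\le C.
\end{equation*}
In particular, $\|v_1^\epsilon\|_{H^1(T_1)}\le\|v_1^\epsilon\|_{H^1(E_1)}\le C$, and the trace inequality on the fixed Lipschitz domain $T_1$ gives $\|v_1^\epsilon\|_{L^2(\partial T_1^e)}\le C$.

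Finally, to translate back to the original coordinates, I would use formula \eqref{surfmeas2} for the surface measure and the identity $u_1^\epsilon\big|_{\partial T_\epsilon^e}\circ\Phi_\epsilon=v_1^\epsilon\big|_{\partial T_1^e}$ to obtain
\begin{equation*}
\int_{\partial T_\epsilon^e}(u_1^\epsilon)^2 d\mathcal{H}^{n-1}=\epsilon^{n-2}\int_{\partial T_1^e}(v_1^\epsilon)^2\frac{\sqrt{1+\epsilon^2\rho'^2}}{\sqrt{1+\rho'^2}}d\mathcal{H}^{n-1}\le C\epsilon^{n-2},
\end{equation*}
which yields the claim after dividing by $\epsilon^{n-2}$ and taking the $\limsup$. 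The main obstacle is the Poincar\'e-Wirtinger step: it is crucial that $E_1$ is a fixed, $\epsilon$-independent, connected Lipschitz domain so the Poincar\'e constant does not degenerate, and that the reference set $K$ lies in a region where Lemma \ref{lemcn1} provides uniform boundedness of $v_1^\epsilon$; the remaining steps are then a routine change of variables and an application of the trace theorem on a fixed domain.
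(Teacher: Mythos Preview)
Your argument is correct, and it takes a genuinely different route from the paper. The paper argues by contradiction: assuming the ratio blows up, a slicewise trace inequality transfers mass from $\partial T_\epsilon^e$ to $T_\epsilon^e$, and then the sign change $c_{1,1}>0>c_{2,1}$ (from Lemma~\ref{lemcn3}) yields a Dirichlet-type Poincar\'e inequality on each longitudinal fibre of the extended tube, producing a gradient lower bound $\int_{E_\epsilon}|\nabla u_1^\epsilon|^2\ge CN\epsilon^{n-1}$ that contradicts $\sigma_1^\epsilon\le C\epsilon^{n-1}$. Your approach is direct: you rescale to the fixed domain $E_1$, bound $\|v_1^\epsilon\|_{H^1(E_1)}$ via a Poincar\'e--Wirtinger inequality anchored on a reference set $K$ where $v_1^\epsilon$ is uniformly close to $c_{1,1}$ by Lemma~\ref{lemcn1}, apply the trace theorem on $T_1$, and scale back using \eqref{surfmeas2}.

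The practical difference is that your proof does not actually need the non-vanishing of $c_{1,1}$ and $c_{2,1}$ from Lemma~\ref{lemcn3}; the mere locally uniform convergence to \emph{some} constant on $D_1$ (Lemma~\ref{lemcn1}) suffices to control $\mathrm{avg}_K v_1^\epsilon$. In this sense your argument is slightly more economical and essentially anticipates Lemma~\ref{lemcn12} (the $H^1(T_1)$ bound for $v_1^\epsilon$) rather than deducing it afterwards from \eqref{asyef1} as the paper does. The paper's contradiction route, on the other hand, avoids invoking the Poincar\'e--Wirtinger and trace constants of $E_1$ explicitly and reuses the slicing machinery already set up in Lemma~\ref{lemcn3}.
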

\begin{proof}
By contradiction we suppose 
\begin{equation}\label{contr3}
N\epsilon^{n-2}\leq\int_{\partial T_{\epsilon}}(u^{\epsilon}_1)^2d\mathcal{H}^{n-1} \quad \forall\, N\in \mathbb{N}.
\end{equation}
Using the same argument in the proof of Lemma \ref{lemcn3} we conclude that
\begin{equation*}
\epsilon\int_{\partial T_{\epsilon}^e}(u^{\epsilon}_1)^2d\mathcal{H}^{n-1}\leq\int_{ T_{\epsilon}^e}(u^{\epsilon}_1)^2dx+\epsilon^2  \int_{T_{\epsilon}^e}|\nabla_{x'}u^{\epsilon}_1|^2dx.
\end{equation*}
from this inequality, the fact that $\sigma_1^{\epsilon}\leq C\epsilon^{n-1}$ and inequality \eqref{contr3} we finally have 
\begin{equation*}
\epsilon^{n-1}\frac{N}{2}\leq \int_{ T_{\epsilon}^e}(u^{\epsilon}_1)^2dx \quad \forall\, N\in \mathbb{N}.
\end{equation*}
We know that $c_{1 ,1}>0>c_{2,1}$, repeating all the arguments in the first part of the proof of Lemma \ref{lemcn3}, we obtain that 
\begin{equation*}
CN\epsilon^{n-1}\leq \int_{E_{\epsilon}}|\nabla u_1^{\epsilon}|^2dx \quad \forall\, N\in \mathbb{N},
\end{equation*}
where $E_{\epsilon}$ is the extended tube defined in \eqref{extt}. This is a contradiction with the fact that $\sigma_1^{\epsilon}\leq C\epsilon^{n-1}$.
\end{proof}
We introduce the following function
\begin{equation*}
v_1^{\epsilon}(x_1,x')=u_1^{\epsilon}(x_1,\epsilon x')\quad \forall \, (x_1,x')\in T_1.
\end{equation*}
\begin{lemma}\label{lemcn12}
Let $n\geq 3$ then there exists $\overline{V}_1\in H^1(T_1)$ such that (up to a sub-sequence that we still denote by $v_1^{\epsilon}$) 
\begin{equation*}
v_1^{\epsilon} \rightharpoonup \overline{V}_1 \quad \text{in} \quad H^1(T_1).
\end{equation*}
and $\overline{V}_1$ depends only on the variable $x_1$.
\end{lemma}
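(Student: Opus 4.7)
The plan is to follow the strategy of Lemma \ref{lemcn2} but without the $\epsilon^{(n-2)/2}$ prefactor: for $k=1$ the eigenfunction is expected to tend to the nonzero constants $c_{1,1},c_{2,1}$ on the two bells (Lemma \ref{lemcn3}), so no rescaling of amplitude is needed inside the tube. I would obtain a uniform $H^1(T_1)$ bound on $v_1^\epsilon$, extract a weakly convergent subsequence, and then prove that the limit is independent of $x'$ by showing that the tangential gradient vanishes in the limit.

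The gradient bound is direct. By the chain rule applied to $v_1^\epsilon=u_1^\epsilon\circ\Phi_\epsilon$, one has $|\nabla v_1^\epsilon|^2=(\partial_{y_1}u_1^\epsilon)^2\circ\Phi_\epsilon+\epsilon^2|\nabla_{y'}u_1^\epsilon|^2\circ\Phi_\epsilon\le|\nabla u_1^\epsilon|^2\circ\Phi_\epsilon$. The change of variables $y=\Phi_\epsilon(x)$ (Jacobian $\epsilon^{n-1}$) together with the upper bound $\sigma_1^\epsilon\le C\epsilon^{n-1}$ from Lemma \ref{lemb1} yields
\begin{equation*}
\int_{T_1}|\nabla v_1^\epsilon|^2\,dx\le \epsilon^{-(n-1)}\int_{T_\epsilon}|\nabla u_1^\epsilon|^2\,dy\le \epsilon^{-(n-1)}\sigma_1^\epsilon\le C.
\end{equation*}

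The main obstacle is the $L^2(T_1)$ bound, since the same change of variables only gives $\|v_1^\epsilon\|_{L^2(T_1)}^2=\epsilon^{-(n-1)}\|u_1^\epsilon\|_{L^2(T_\epsilon)}^2$, so one needs $\|u_1^\epsilon\|_{L^2(T_\epsilon)}^2\le C\epsilon^{n-1}$. I would obtain this sectionwise, exactly as in Lemma \ref{lemcn2}: apply the Poincaré/Robin inequality on each cross-section $B^{n-1}_{\epsilon\rho(x_1)}(x_1)$, use $\lambda_1(B^{n-1}_{\epsilon\rho(x_1)},1)\sim(n-1)/(\epsilon\rho(x_1))$, and integrate in $x_1$ to get
\begin{equation*}
\int_{T_\epsilon}(u_1^\epsilon)^2\,dy\le C\epsilon\Bigl(\int_{T_\epsilon}|\nabla u_1^\epsilon|^2\,dy+\int_{\partial T_\epsilon^e}(u_1^\epsilon)^2\,d\mathcal{H}^{n-1}\Bigr).
\end{equation*}
Here the naive bound $\int_{\partial T_\epsilon^e}(u_1^\epsilon)^2\le 1$ coming from the $L^2(\partial\Omega_\epsilon)$ normalization only gives $\|u_1^\epsilon\|^2_{L^2(T_\epsilon)}\le C\epsilon$, which is insufficient for $n\ge 3$. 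This is precisely the point where the sharper estimate $\int_{\partial T_\epsilon^e}(u_1^\epsilon)^2\,d\mathcal{H}^{n-1}\le C\epsilon^{n-2}$ from Lemma \ref{lemcn11} (and ultimately from $c_{1,1},c_{2,1}\neq 0$) is indispensable; combined with $\int_{T_\epsilon}|\nabla u_1^\epsilon|^2\le\sigma_1^\epsilon\le C\epsilon^{n-1}$ it yields $\|u_1^\epsilon\|_{L^2(T_\epsilon)}^2\le C\epsilon(\epsilon^{n-1}+\epsilon^{n-2})\le C\epsilon^{n-1}$, and hence $\|v_1^\epsilon\|_{L^2(T_1)}\le C$.

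With the uniform $H^1(T_1)$ bound in hand, weak compactness produces $\overline{V}_1\in H^1(T_1)$ such that $v_1^\epsilon\rightharpoonup \overline{V}_1$ along a subsequence. To see that $\overline{V}_1$ depends only on $x_1$, the same change of variables gives
\begin{equation*}
\int_{T_1}|\nabla_{x'}v_1^\epsilon|^2\,dx=\epsilon^{3-n}\int_{T_\epsilon}|\nabla_{y'}u_1^\epsilon|^2\,dy\le \epsilon^{3-n}\sigma_1^\epsilon\le C\epsilon^2\longrightarrow 0,
\end{equation*}
and weak lower semicontinuity of the $L^2$ norm forces $\nabla_{x'}\overline{V}_1\equiv 0$.
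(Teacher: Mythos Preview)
Your proposal is correct and follows essentially the same approach as the paper: the gradient bound via change of variables and \eqref{ubs1}, the $L^2$ bound via the sectionwise Robin inequality together with the sharper estimate \eqref{asyef1} from Lemma \ref{lemcn11}, weak compactness, and then $\nabla_{x'}v_1^\epsilon\to 0$ in $L^2$ to conclude $x'$-independence. You also correctly identify that Lemma \ref{lemcn11} is the key ingredient that distinguishes this case from the naive argument.
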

\begin{proof}
We start with the bound of $||\nabla v_1^{\epsilon}||_{L^2(T_1)}$ 
\begin{equation*}
\int_{T_1}|\nabla v_1^{\epsilon}|^2 dx\leq \int_{T_1} \Big( \frac{\partial v_1^{\epsilon}}{\partial x_1} \Big )^2+\frac{1}{\epsilon^2} |\nabla_{x'} v_1^{\epsilon} |^2 dx=\frac{1}{\epsilon^{n-1}}\int_{T_{\epsilon}}|\nabla u_1^{\epsilon}|^2 dy\leq C
\end{equation*}
where we did the change of coordinates $y_1=x_1$, $y'=\epsilon x'$ and the last inequality is true because of \eqref{ubs1}.  We want now to bound $||v_1^{\epsilon}||_{L^2(T_1)}$. Following the computations in the proof of Lemma \ref{lemcn3} we obtain 
\begin{align*}
\int_{T_1^e}(v^{\epsilon}_1)^2dx=\frac{1}{\epsilon^{n-1}}\int_{T_{\epsilon}^e}(u^{\epsilon}_1)^2dx&\leq\int_{-\frac{L}{2}}^{\frac{L}{2}}\frac{\int_{B_{\epsilon\rho(x_1)}^{n-1}(x_1)}|\nabla u_1^{\epsilon}|^2d\mathcal{H}^{n-1}+\int_{\partial B_{\epsilon\rho(x_1)}^{n-1}(x_1)}(u_1^{\epsilon})^2d\mathcal{H}^{n-2}}{\epsilon^{n-1} \lambda_1(B_{\epsilon\rho(x_1)}^{n-1}(x_1),1)}\\
&\leq \frac{C}{\epsilon^{n-2}}\big (\int_{T_{\epsilon}^e}|\nabla u_1^{\epsilon}|^2dx+\int_{\partial T_{\epsilon}^e}(u_1^{\epsilon})^2d\mathcal{H}^{n-1} \big )\\
&\leq C,
\end{align*}
where the last inequality is true beacuse of  \eqref{asyef1} and \eqref{ubs1}. 
We conclude that there exist $\overline{V}_1\in H^1(T_1)$ such that (up to a sub-sequence that we still denote by $v_1^{\epsilon}$) 
\begin{equation*}
v_1^{\epsilon} \rightharpoonup \overline{V}_1 \quad \text{in} \quad H^1(T_1).
\end{equation*}

We finish the proof by showing that $\overline{V}_1$ does not depend on $x_i$ for all $i\geq 2$, indeed 
\begin{equation*}
\int_{T_1} \big ( \frac{\partial v_1^{\epsilon} }{\partial x_i} \big )^2dx=\frac{1}{\epsilon^{n-3}} \int_{T_{\epsilon}} \big ( \frac{\partial u_k^{\epsilon} }{\partial x_i} \big )^2dx\leq C \epsilon^2 \rightarrow 0.
\end{equation*} 
\end{proof}

We denote by $V_1$ the restriction to the $x_1$ variable of the function $\overline{V}_1$ and we introduce the extended tube $E_{\epsilon}$ (see \eqref{extt}). In the next Lemma we find the boundary conditions of $V_k$.
\begin{lemma}\label{lemcn13}
Let $n\geq 3$ and let $V_1$ be the restriction to $x_1$ of the limit eigenfunction $\overline{V}_1$ in Lemma \ref{lemcn12} then $V_1$ is continuous and
$$V_1(-\frac{L}{2})=c_{1,1}\quad \mbox{and} \quad V_1(\frac{L}{2})=c_{2,1}.$$
\end{lemma}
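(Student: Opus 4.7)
The plan is to adapt the boundary-value identification argument of Lemma \ref{lep1} (and of the corresponding step in Lemma \ref{lemcn5}) to the first eigenfunction. The key difference with the case $k \ge 2$ is that here $v_1^{\epsilon}$ carries no $\epsilon^{(n-2)/2}$ prefactor, which is precisely what will allow the nonzero boundary values $c_{1,1}, c_{2,1}$ to be inherited from $D_1, D_2$. I would first extend $\rho$ as a constant on an interval of length $\xi > 0$ past each endpoint and consider the extended tube $E_{\epsilon}$ defined in \eqref{extt}, with $\xi$ small enough that $E_{\epsilon}\subset \Omega_{\epsilon}$. Repeating verbatim the bounds of Lemmas \ref{lemcn11} and \ref{lemcn12} on $E_\epsilon$ in place of $T_\epsilon$ yields, up to a subsequence,
$$v_1^{\epsilon}\rightharpoonup \overline{V}_1 \quad \text{in }H^1(E_1),$$
with $\overline{V}_1$ depending only on $x_1$. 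Hence $V_1\in H^1\bigl((-\tfrac{L}{2}-\xi,\tfrac{L}{2}+\xi)\bigr)$, and the one-dimensional Sobolev embedding gives $V_1\in C\bigl([-\tfrac{L}{2}-\xi,\tfrac{L}{2}+\xi]\bigr)$, which settles the continuity assertion.

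For the boundary values at $-\tfrac{L}{2}$, fix $\delta\in(0,\xi]$ and pick $r\in(0,\delta/2)$ small enough that the ball $B_r(-\tfrac{L}{2}-\delta,0)\subset E_1$. By construction, whenever $(x_1,x')\in B_r(-\tfrac{L}{2}-\delta,0)$ and $\epsilon$ is small, the point $(x_1,\epsilon x')$ lies in a fixed compact subset of $D_1$. Lemma \ref{lemcn1} then provides
$$v_1^{\epsilon}(x_1,x')=u_1^{\epsilon}(x_1,\epsilon x')\longrightarrow c_{1,1} \quad \text{uniformly on } B_r(-\tfrac{L}{2}-\delta,0).$$
Uniform convergence implies strong $L^2$-convergence on $B_r(-\tfrac{L}{2}-\delta,0)$, and by uniqueness of weak limits this forces $\overline{V}_1\equiv c_{1,1}$ on that ball, hence $V_1\equiv c_{1,1}$ on a neighborhood of $-\tfrac{L}{2}-\delta$. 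Since $\delta\in(0,\xi]$ was arbitrary, $V_1\equiv c_{1,1}$ on the whole open interval $(-\tfrac{L}{2}-\xi,-\tfrac{L}{2})$, and the continuity of $V_1$ at $-\tfrac{L}{2}$ yields $V_1(-\tfrac{L}{2})=c_{1,1}$. The exact same argument performed near $\tfrac{L}{2}$ with $D_2$ in place of $D_1$ gives $V_1(\tfrac{L}{2})=c_{2,1}$.

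The only genuine obstacle is the bookkeeping in the first step: one must check that the a priori estimates of Lemmas \ref{lemcn11} and \ref{lemcn12}, which rely on the $L^2$-normalization of $u_1^{\epsilon}$ on $\partial\Omega_{\epsilon}$ and on the sharp upper bound $\sigma_1^{\epsilon}\le C\epsilon^{n-1}$, still go through when the tube is replaced by $E_\epsilon$. This is immediate because the extra pieces of $E_\epsilon$ sit inside $D_1\cup D_2$, where Lemma \ref{lemcn1} gives uniform $H^1$-control of $u_1^{\epsilon}$; the boundary contributions from $\partial E_\epsilon^e$ also remain bounded by $\int_{\partial\Omega_{\epsilon}}(u_1^{\epsilon})^2\le 1$. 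Once this extension is justified, everything else is a direct consequence of the uniform convergence of $u_1^\epsilon$ on compacts of $D_i$ combined with the weak $H^1$-limit on $E_1$.
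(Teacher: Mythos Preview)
Your proof is correct and follows essentially the same route as the paper: extend the tube to $E_\epsilon$, get weak $H^1(E_1)$ convergence of $v_1^\epsilon$ to a function of $x_1$ only, use the one-dimensional Sobolev embedding for continuity, and identify $V_1$ on the extension by exploiting the locally uniform convergence $u_1^\epsilon\to c_{i,1}$ in $D_i$. The only cosmetic difference is that the paper identifies the limit at single points $(-\tfrac{L}{2}-\delta,0)$ while you work on small balls and pass through $L^2$-convergence plus uniqueness of weak limits; your version is a touch more explicit but the underlying argument is the same.
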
 
\begin{proof}
The first point is immediate because we know that, if we consider the extended tube $E_1$, $V_1\in H^1((-\frac{L}{2}-\xi,\frac{L}{2}+\xi))$, by classical embedding theorem we have $V_1\in C((-\frac{L}{2}-\xi,\frac{L}{2}+\xi))$.
We prove the second point, we know, from Lemma \ref{lemcn11} that $u_1^{\epsilon}$ locally uniformly converge to $c_{1,1}$ in $D_1$. From this fact we have that 
\begin{equation*}
v_1^{\epsilon}(-\frac{L}{2}-\delta,0)=u_1^{\epsilon}(-\frac{L}{2}-\delta,0)\rightarrow c_{1,1}=V_1(-\frac{L}{2}-\delta)\quad \forall \, \xi\geq \delta>0.
\end{equation*}
From the continuity of $V_1$ we conclude that 
\begin{equation*}
V_1(-\frac{L}{2})=\lim_{\delta\rightarrow 0} V_1(-\frac{L}{2}-\delta)=c_{1,1}.
\end{equation*}
Using the same techniques we obtain also $V_1(\frac{L}{2})=c_{2,1}$. 
\end{proof}
\subsection{Proof of Theorem \ref{th3}}
In this section we prove Theorem \ref{th3}. We will show that the bound from above given in the first part of Lemma \ref{lemb1} is actually the right asymptotics. In particular the following result holds.
\begin{lemma}\label{bb3}Let $n\geq 3$ then, for $\epsilon$ small enough 
\begin{equation}
\sigma_1^{\epsilon}\geq \sigma_1\epsilon^{n-1}+o(\epsilon^{n-1}) 
\end{equation}
where $\sigma_1$ is the positive number such that the following differential equation has a solution:
\begin{equation*}
\begin{cases}
\vspace{0.2cm}
   -w_{n-1}\frac{d}{dx}\big(\rho^{n-1}(x)\frac{d V_1}{dx}(x)\big)=0  \qquad  x\in \big(-\frac{L}{2},\frac{L}{2} \big) \\
   \vspace{0.2cm}
   \rho^{n-1}(-\frac{L}{2})\frac{dV_1}{dx}(-\frac{L}{2})=-\frac{\sigma_1}{w_{n-1}}P(D_1)V_1(-\frac{L}{2}) \\
   \rho^{n-1}(\frac{L}{2})\frac{dV_1}{dx}(\frac{L}{2})=\frac{\sigma_1}{w_{n-1}}P(D_2)V_1(\frac{L}{2}).
\end{cases}
\end{equation*}
\end{lemma}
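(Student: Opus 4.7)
The plan is to adapt the scheme of Step 1 in Lemma \ref{lep1} to the scale $\epsilon^{n-1}$. Since the matching upper bound $\sigma_1^{\epsilon} \leq \sigma_1\epsilon^{n-1} + o(\epsilon^{n-1})$ is already established in Lemma \ref{lemb1}, it suffices to extract any subsequential limit $\beta := \lim_{\epsilon \to 0} \sigma_1^{\epsilon}/\epsilon^{n-1} \in [0,\sigma_1]$ and show $\beta = \sigma_1$, exploiting the limit function $V_1$ from Lemma \ref{lemcn12}.

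First I would derive the limit ODE for $V_1$ by testing the Steklov equation with $\phi \in C_c^{\infty}(-L/2,L/2)$, constantly extended in the variables $x'$. After the change of variables $y_1=x_1,\ y'=\epsilon x'$, division by $\epsilon^{n-1}$, and passage to the limit using $v_1^{\epsilon} \rightharpoonup \overline{V}_1$ in $H^1(T_1)$, the left-hand side tends to $\omega_{n-1}\int_{-L/2}^{L/2} V_1' \phi' \rho^{n-1} dx_1$. The crucial point is that the right-hand side $\sigma_1^{\epsilon}\int_{\partial T_\epsilon^e} u_1^{\epsilon}\phi\, d\mathcal{H}^{n-1}$ is, by \eqref{surfmeas2}, of order $\sigma_1^{\epsilon}\cdot\epsilon^{n-2} = O(\epsilon^{2n-3})$, which is $o(\epsilon^{n-1})$ precisely because $n\geq 3$. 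Hence the limit equation is homogeneous,
$$-\bigl(\rho^{n-1} V_1'\bigr)' = 0 \quad \text{on } (-L/2, L/2),$$
so $\rho^{n-1} V_1' \equiv K$ is a constant.

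For the boundary conditions I would test with $\psi \in H^1(\Omega_\epsilon)$ depending only on $x_1$, chosen so that $\psi \equiv 1$ on $D_1$ and $\psi \equiv 0$ on $D_2$. Following the computation at the end of Lemma \ref{lep1}, the boundary integral on $\partial D_1\cap\partial\Omega_\epsilon$ contributes $c_{1,1}P(D_1) = V_1(-L/2) P(D_1)$ in the limit (using Lemma \ref{lemcn1} and the identifications of Lemma \ref{lemcn13}), while the $\partial T_\epsilon^e$ contribution is again $o(\epsilon^{n-1})$ after multiplication by $\sigma_1^{\epsilon}$ and division by $\epsilon^{n-1}$. Since $\rho^{n-1}V_1' \equiv K$ and $\psi(-L/2)=1$, $\psi(L/2)=0$, the LHS collapses to $-\omega_{n-1} K$, yielding
$$\rho^{n-1}(-L/2)\, V_1'(-L/2) = -\frac{\beta}{\omega_{n-1}} P(D_1) V_1(-L/2);$$
the symmetric choice of $\psi$ gives the condition at $L/2$. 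Thus $V_1$ solves \eqref{ep2} with $\sigma_1$ replaced by $\beta$.

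It remains to conclude $\beta = \sigma_1$. By Lemmas \ref{lemcn11} and \ref{lemcn13}, $V_1(-L/2)=c_{1,1}>0>c_{2,1}=V_1(L/2)$, so $V_1$ is non-constant, which forces $K\neq 0$ and hence $\beta > 0$. Thus $V_1$ is a non-trivial solution of the limit problem; explicitly integrating $\rho^{n-1} V_1' \equiv K$ and combining with the two Robin-type boundary conditions pins down $\sigma_1$ uniquely (one finds $\sigma_1 = \omega_{n-1}\bigl(P(D_1)^{-1}+P(D_2)^{-1}\bigr)/\int_{-L/2}^{L/2}\rho^{-(n-1)}\,dt$), giving $\beta = \sigma_1$ and hence $\sigma_1^\epsilon \geq \sigma_1\epsilon^{n-1} + o(\epsilon^{n-1})$. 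The main delicacy is the bookkeeping of powers of $\epsilon$: the interplay between $\sigma_1^\epsilon = O(\epsilon^{n-1})$ and the surface factor $\epsilon^{n-2}$ is exactly what kills the bulk term at the limit and produces only Robin-type boundary conditions driven by the traces $c_{1,1}, c_{2,1}$ on $D_1, D_2$, in sharp contrast with the Dirichlet limit problem obtained for $k\geq 2$.
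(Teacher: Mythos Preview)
Your argument is correct, but it takes a different route from the paper's proof of this lemma. The paper gives a direct Rayleigh-quotient lower bound in a few lines: from
\[
\sigma_1^{\epsilon}=\frac{\int_{\Omega_{\epsilon}}|\nabla u_1^{\epsilon}|^2}{\int_{\partial\Omega_{\epsilon}}(u_1^{\epsilon})^2}
\;\geq\;
\epsilon^{n-1}\,\frac{\int_{T_1}|\nabla v_1^{\epsilon}|^2}{\int_{\partial D_1\cup\partial D_2}(u_1^{\epsilon})^2+o(1)},
\]
one passes to the limit using weak lower semicontinuity of the Dirichlet energy, Lemma~\ref{lemcn12}, Lemma~\ref{lemcn1} and Lemma~\ref{lemcn13}, obtaining
\[
\liminf_{\epsilon\to0}\frac{\sigma_1^{\epsilon}}{\epsilon^{n-1}}
\;\geq\;
\frac{\omega_{n-1}\int_{-L/2}^{L/2}\rho^{n-1}(V_1')^2}{P(D_1)V_1(-\tfrac{L}{2})^2+P(D_2)V_1(\tfrac{L}{2})^2}
\;\geq\;\sigma_1,
\]
the last inequality being the variational characterization of $\sigma_1$ (with the mean-zero constraint $P(D_1)V_1(-\tfrac{L}{2})+P(D_2)V_1(\tfrac{L}{2})=0$ coming from \eqref{ec1}). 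No identification of the limit ODE is needed at this stage.

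Your approach instead first derives the limit ODE and the Robin boundary conditions for $V_1$ with parameter $\beta$, then invokes uniqueness of $\sigma_1$ to force $\beta=\sigma_1$. This is exactly the computation the paper carries out \emph{afterwards}, in the proof of Theorem~\ref{th3}, to identify the limit eigenfunction; you have effectively folded that argument into the lower bound. What this buys you is that the eigenfunction convergence comes for free, but for the bare inequality of Lemma~\ref{bb3} the paper's route is shorter and avoids the test-function manipulations. Either way the key inputs are the same: the upper bound from Lemma~\ref{lemb1}, the convergence Lemmas~\ref{lemcn1}, \ref{lemcn11}, \ref{lemcn12}, \ref{lemcn13}, and the fact that $c_{1,1}$ and $c_{2,1}$ are nonzero with opposite signs so that $V_1$ is non-constant.
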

\begin{proof}
From the variational characterization of the first Steklov eigenfunction we have:
\begin{equation*}
\sigma_1^{\epsilon}=\frac{\int_{\Omega_{\epsilon}}|\nabla u_1^{\epsilon}|^2dx}{\int_{\partial\Omega_{\epsilon}}(u_1^{\epsilon})^2d\mathcal{H}^{n-1}}\geq \epsilon^{n-1}\frac{\int_{T^e_1}|\nabla v_1^{\epsilon}|^2dx}{\int_{\partial D_1\cup \partial D_2 }(u_1^{\epsilon})^2d\mathcal{H}^{n-1}+o(\epsilon)}.
\end{equation*}
From this inequality and the convergence results, in particular Lemmas \ref{lemcn1}, \ref{lemcn12} and \ref{lemcn13} we obtain 
\begin{equation*}
\sigma_1^{\epsilon}\geq \epsilon^{n-1}\frac{w_{n-1}\int_{-\frac{L}{2}}^{\frac{L}{2}}(V_1')^2\rho^{n-1}dx}{P(D_1)V_1^2(-\frac{L}{2})+P(D_2)V_1^2(\frac{L}{2})}+o(\epsilon^{n-1})\geq \epsilon^{n-1}\sigma_1+o(\epsilon^{n-1}),
\end{equation*}
where $\sigma_1>0$ is the positive number such that the following differential equation has a solution:
\begin{equation*}
\begin{cases}
\vspace{0.2cm}
   -w_{n-1}\frac{d}{dx}\big(\rho^{n-1}(x)\frac{d V_1}{dx}(x)\big)=0  \qquad  x\in \big(-\frac{L}{2},\frac{L}{2} \big) \\
   \vspace{0.2cm}
   \rho^{n-1}(-\frac{L}{2})\frac{dV_1}{dx}(-\frac{L}{2})=-\frac{\sigma_1}{w_{n-1}}P(D_1)V_1(-\frac{L}{2}) \\
   \rho^{n-1}(\frac{L}{2})\frac{dV_1}{dx}(\frac{L}{2})=\frac{\sigma_1}{w_{n-1}}P(D_2)V_1(\frac{L}{2}).
\end{cases}
\end{equation*}
\end{proof}
Now we are ready to prove Theorem \ref{th3}
\begin{proof}[Proof of Theorem \ref{th3}]
From the first part of Lemma \ref{lemb1} and Lemma \ref{bb3}, we get that  
\begin{equation*}
\sigma_1^{\epsilon}\sim\sigma_1 \epsilon^{n-1}+o(\epsilon^{n-1}) \quad \text{as} \quad \epsilon\rightarrow 0, 
\end{equation*}
We use the variational formulation of the Steklov problem using the following test function $\phi\in C_c^{\infty}(-\frac{L}{2},\frac{L}{2})$ (we constantly extend $\phi$ in the last variables $x'$), we obtain:
\begin{equation*}
\int_{T_{\epsilon}} \frac{\partial u_1^{\epsilon} }{\partial x_1} \frac{\partial \phi }{\partial x_1} dx=\sigma_k^{\epsilon} \int_{\partial T_{\epsilon}^e}  u_1^{\epsilon} \phi d\mathcal{H}^{n-1}.
\end{equation*}

We perform the following change of variable $y_1=x_1$ and $y'=\epsilon x'$ in the right hand side of the variational formulation, using the formula \eqref{surfmeas2} for the surface measure in the right hand side we obtain 
\begin{align*}
\int_{T_1} \frac{\partial v_1^{\epsilon}}{\partial y_1} \frac{\partial \phi }{\partial y_1} dy&=\frac{\sigma_k^{\epsilon}}{\epsilon}\big ( \int_{-\frac{L}{2}}^{\frac{L}{2}}\int_{\partial B_{\epsilon\rho(x_1)}^{n-1}(x_1)}u_k^{\epsilon}(x_1,\epsilon\tilde{\rho}(x_1)) \phi(x_1)\rho^{n-2}\sqrt{1+\epsilon^2\rho'^2}dx_1d\varphi_1...d\varphi_{n-2}\big )\\
&=\frac{\sigma_k^{\epsilon}}{\epsilon}\big ( \int_{-\frac{L}{2}}^{\frac{L}{2}}\int_{\partial B_{\rho(x_1)}^{n-1}(x_1)}v_k^{\epsilon}(x_1,\tilde{\rho}(x_1)) \phi(x_1)\rho^{n-2}\sqrt{1+\epsilon^2\rho'^2}dx_1d\varphi_1...d\varphi_{n-2}\big ),
\end{align*}
where $\tilde{\rho}(x_1)$ are the spherical coordinates that describes $\partial B_{\rho(x_1)}^{n-1}(x_1)$. 

We let $\epsilon$ goes to $0$, recalling that $v_1^{\epsilon} \rightharpoonup \overline{V}_1$  in $H^1(T_1)$ and the fact that $\sigma_1^{\epsilon}\sim\sigma_1 \epsilon^{n-1}+o(\epsilon^{n-1})$  we obtain
\begin{equation*}
\int_{T_1} \frac{\partial \overline{V}_1 }{\partial y_1} \frac{\partial \phi }{\partial y_1} dy=0.
\end{equation*}
Integrating by parts in the left hand side, we finally obtain
\begin{equation*}
-w_{n-1}\int_{-\frac{L}{2}}^{\frac{L}{2}} \frac{d}{dy} \big (  \rho^{n-1} \frac{d}{dy}  V_1 \big ) \phi dy=0.
\end{equation*}
This relation is true for every test function $\phi\in C_c^{\infty}(-\frac{L}{2},\frac{L}{2})$ so we have that $V_1$ and $\alpha_1$ must have to satisfy the following differential equation
\begin{equation}\label{sen1}
-\frac{d}{dx}\big(\rho(x)^{n-1}\frac{d V_1}{dx}(x)\big)=0  \qquad  x\in \big(-\frac{L}{2},\frac{L}{2} \big).
\end{equation}

In order to find the boundary conditions for this equation we use the variational formulation with a test function $\psi$ defined on all $\Omega_{\epsilon}$ and which depends only on $x_1$,
\begin{equation*}
\int_{\Omega_{\epsilon}} \frac{\partial u_1^{\epsilon} }{\partial x_1} \frac{\partial \psi }{\partial x_1} dx=\sigma_1^{\epsilon} \int_{\partial \Omega_{\epsilon}}  u_1^{\epsilon} \psi d\mathcal{H}^{n-1}.
\end{equation*}
We recall that $u_1^{\epsilon}$ uniformly converge to $c_{1,1}$ in $D_1$ and to $c_{2,1}$ in $D_2$, so we extend the functions $v_1^{\epsilon}$ to be equal to $u_1^{\epsilon}$ in $D_1$ and the same for $D_2$. From Lemma \ref{lemcn13} we have that $v_1^{\epsilon}\rightarrow c_{1,1}=V_1 \big (-\frac{L}{2}\big ) $ in $D_1$ and  $v_1^{\epsilon}\rightarrow c_{2,1}=V_1 \big (\frac{L}{2}\big ) $ in $D_2$.
We repeat all the computations that we did above and we obtain
\begin{equation*}
w_{n-1}\int_{-\frac{L}{2}}^{\frac{L}{2}}\rho^{n-1} \frac{d \overline{V}_1 }{dx_1} \frac{d \psi }{d x_1} dx_1=\sigma_1\big (  V_1 \big (-\frac{L}{2}\big )\int_{\partial D_1}\psi  d\mathcal{H}^{n-1} +   V_1 \big ( \frac{L}{2}\big )\int_{\partial D_2} \psi  d\mathcal{H}^{n-1} \big ).
\end{equation*}

Integrating by parts in the left hand side and recalling \eqref{sen1}, we finally obtain
\begin{eqnarray*}
\rho^{n-1}(\frac{L}{2})\frac{dV_1}{dx}(\frac{L}{2})\psi(\frac{L}{2})-\rho^{n-1}(-\frac{L}{2})\frac{dV_1}{dx}(-\frac{L}{2})\psi(-\frac{L}{2})=\\
=\sigma_1\big (  V_1 \big (-\frac{L}{2}\big )\int_{\partial D_1}\psi  d\mathcal{H}^{n-1} +   V_1 \big ( \frac{L}{2}\big )\int_{\partial D_2} \psi d\mathcal{H}^{n-1} \big ).
\end{eqnarray*}
 We choose the test function such that $\psi=1$ in $D_1$ and $\psi=0$ in $D_2$ and we deduce the first boundary condition
\begin{equation*}
 \rho^{n-1}(-\frac{L}{2})\frac{dV_1}{dx}(-\frac{L}{2})=-\frac{\sigma_1}{w_{n-1}}P(D_1)V_1(-\frac{L}{2}).
\end{equation*} 
Similarly if we choose the test function such that $\psi=0$ in $D_1$ and $\psi=1$ in $D_2$ we get the second boundary condition 
\begin{equation*}
 \rho^{n-1}(\frac{L}{2})\frac{dV_1}{dx}(\frac{L}{2})=\frac{\sigma_1}{w_{n-1}}P(D_2)V_1(\frac{L}{2}).
\end{equation*}

We finally obtain the following differential equation
\begin{equation*}
\begin{cases}
\vspace{0.2cm}
   -w_{n-1}\frac{d}{dx}\big(\rho^{n-1}(x)\frac{d V_1}{dx}(x)\big)=0  \qquad  x\in \big(-\frac{L}{2},\frac{L}{2} \big) \\
   \vspace{0.2cm}
   \rho^{n-1}(-\frac{L}{2})\frac{dV_1}{dx}(-\frac{L}{2})=-\frac{\sigma_1}{w_{n-1}}P(D_1)V_1(-\frac{L}{2}) \\
   \rho^{n-1}(\frac{L}{2})\frac{dV_1}{dx}(\frac{L}{2})=\frac{\sigma_1}{w_{n-1}}P(D_2)V_1(\frac{L}{2}).
\end{cases}
\end{equation*}
Using the same argument as at the end of proof of Theorem \ref{th1} we conclude that the result is true for all the sequence $\{ \epsilon_n\}_{n=1}^{\infty}$. This concludes the proof.  
\end{proof}
\section{Application: counter-example to a Spectral Inequality}
We consider the Neumann eigenvalue problem
\begin{equation*}
\begin{cases}
     -\Delta v_k=\mu_k v_k\qquad  &\Omega \\
      \partial_{\nu} v_k=0 \qquad &\partial\Omega.
\end{cases}
\end{equation*}
During the writing of the paper \cite{GHL} came the following question: is it true that the inequality
\begin{equation*}
\mu_1|\Omega|\geq \sigma_1P(\Omega),
\end{equation*}
holds for any plane domains?
For several domains like balls, annulus, rectangles, convex sets  with a ratio between the inradius
and circumradius large enough, this inequality turns out to be true. 

Nevertheless, the results of \cite{BN20,GKL20} implicitly show that the inequality can not be true in general, its failure coming either from highly oscillating boundaries or from the presence of a large number of small holes. 

Our aim in this section is to provide another counter-example which is simply connected and do not have an oscillating boundary, for which the reverse inequality holds:
\begin{equation*}
\mu_1|\Omega|< \sigma_1P(\Omega).
\end{equation*}
Consider the domain $\Omega_{\epsilon}\subset \mathbb{R}^2$ constructed as follow, $D_1$ and $D_2$ are two balls such that $|D_1|=|D_2|=1$ and $\rho=1$ constantly.

\begin{figure}[H]
\centering
\tdplotsetmaincoords{0}{0}
\begin{tikzpicture}[tdplot_main_coords, scale=0.5]
\draw (-3,1) arc (10:350:3);
\draw [dashed](-3,-0.15) -- (-3,1);
\draw (-3,-0.15) -- (3,-0.15);
\draw (-3,1) -- (3,1);
\draw (3,1) arc (170:-170:3);
\draw [dashed](3,-0.15) -- (3,1);
\draw (-6,1) node {$D_1$};
\draw (6,1) node {$D_2$};
\draw (0,0.3) node {$T_{\epsilon}$};
\draw [dashed] (-6,0.3) -- (-8.9,0.3);
\draw (-7.3,-0.3) node {$r=1/\sqrt{\pi}$};
\filldraw  (-6,0.3) circle (2pt);
\draw [dashed] (6,0.3) -- (8.9,0.3);
\draw (7.3,-0.3) node {$r=1/\sqrt{\pi}$};
\filldraw  (6,0.3) circle (2pt);
\end{tikzpicture}
\caption{Dumbbell shape domain in $n=2$, with $\rho\equiv 1$.}
\label{fig2}
\end{figure}
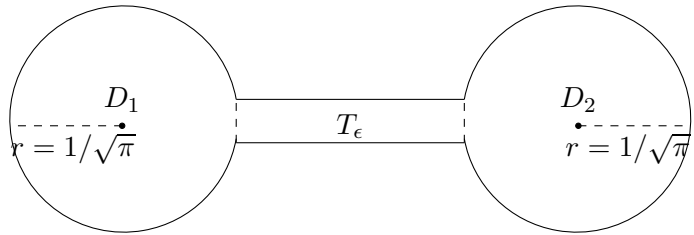

One can compute the eigenvalues of problem \eqref{ep1}, which becomes
\begin{equation*}
\begin{cases}
\vspace{0.2cm}
  - \frac{d^2 V_k}{dx^2}(x)=\alpha_k V_k(x)  \qquad  x\in \big(-\frac{L}{2},\frac{L}{2} \big) \\
\vspace{0.2cm}
     \frac{dV_k}{dx}(-\frac{L}{2})=-\frac{\alpha_k}{2}P(D_1)V_k(-\frac{L}{2}) \\
      \frac{dV_k}{dx}(\frac{L}{2})=\frac{\alpha_k}{2}P(D_2)V_k(\frac{L}{2}).
\end{cases}
\end{equation*}
The general solution has the following form $V_k=A\cos(w_kx)+B\sin(w_kx)$ where $w_k^2=\alpha_k$ and the boundary conditions give us the following equations for the unknowns $A$ and $B$,
 
\begin{equation*}
\begin{cases}
\vspace{0.2cm}
A(\sin(w_k\frac{L}{2})+w_k\frac{P(D_1)}{2}\cos(w_k\frac{L}{2}))+B(\cos(w_k\frac{L}{2})-w_k\frac{P(D_1)}{2}\sin(w_k\frac{L}{2}))=0 \\

A(-\sin(w_k\frac{L}{2})-w_k\frac{P(D_2)}{2}\cos(w_k\frac{L}{2}))+B(\cos(w_k\frac{L}{2})-w_k\frac{P(D_2)}{2}\sin(w_k\frac{L}{2}))=0.
\end{cases}
\end{equation*}

In order to have non trivial solutions the determinant of this $2\times 2$ system must be equal to $0$, so we obtain that $w_k$ must be satisfies the following transcendental equation
\begin{equation}\label{eqdeterm}
\cot(w_kL)=\frac{w_k^2P(D_1)P(D_2)-4}{2w_k(P(D_1)+P(D_2))}.
\end{equation}

We know that $\sigma_1^{\epsilon}\sim \alpha_1 \epsilon$, where $\alpha_1=w_1^2$ where $w_1$ is the first value for which the equation \eqref{eqdeterm} holds. 

By choosing, in the variational formulation, a test function which is constant on each disk and affine in the tube, we can prove that $\mu_1^{\epsilon}\leq \frac{4}{L} \epsilon$, so we conclude that 
\begin{equation*}
\frac{|\Omega_{\epsilon}|}{P(\Omega_{\epsilon})}\mu_1^{\epsilon}\leq \frac{4}{(2\sqrt{\pi}+L)L}\epsilon
+o(\epsilon).
\end{equation*}

If we prove that there exists $L>0$ such that:
\begin{equation*}
 \frac{4}{(2\sqrt{\pi}+L)L}<\alpha_1
\end{equation*}
we conclude that there exists $\overline{\epsilon}$ such that 
\begin{equation*}
\mu_1^{\overline{\epsilon}}|\Omega_{\overline{\epsilon}}|< \sigma_1^{\overline{\epsilon}}P(\Omega_{\overline{\epsilon}}).
\end{equation*}

We introduce the following function
\begin{equation*}
f(w)=\cot(wL)-\frac{w^2\pi-1}{2\sqrt{\pi}w},
\end{equation*}
an easy computation shows that
\begin{equation*}
0<f(x)\quad \forall \quad 0<x<\frac{3\pi}{4L} \Leftrightarrow \quad L>\frac{3}{4}(\sqrt{2}+1)\pi^{\frac{3}{2}}.
\end{equation*} 

So we conclude that for all $L$ such that $ L>\frac{3}{4}(\sqrt{2}+1)\pi^{\frac{3}{2}}$ we have 
\begin{equation*}
\alpha_1\geq \frac{9\pi^2}{16L^2}>\frac{4}{(2\sqrt{\pi}+L)L}.
\end{equation*}
providing the desired counter-example.

\bigskip\noindent
{\bf Acknowledgements}: 
This work was partially supported by the project ANR-18-CE40-0013 SHAPO financed by the French Agence Nationale de la Recherche (ANR).

\bibliographystyle{mybst}
\bibliography{References}

\end{document}